\newtheorem{theorem}{Theorem}[section]
\newtheorem{proposition}[theorem]{Proposition}
\newtheorem{notation}[theorem]{Notation}
\newtheorem{lemma}[theorem]{Lemma}
\newtheorem{corollary}[theorem]{Corollary}
\newtheorem{definition}[theorem]{Definition}
\theoremstyle{remark}
\newtheorem{example}[theorem]{\bf Example}
\def \1{\mathbb {1}}
\def \RM{\mathbb {R}}%        corps des reels
\def \NM{\mathbb{N}}%        entiers naturels
\def \ZM{\mathbb{Z}}%        entiers relatifs
\def \CM{\mathbb{C}}%        nombres complexes
\def \QM{\mathbb{Q}}%        nombres rationnels
\def \PM {\mathbb{P}}
 \def \Vol {{\rm Vol}}
 \def \Hom {{\rm Hom}}
 \def \grad {{\rm grad}}
\def \Im {{\rm Im\,}}
\def \ord {{\rm ord\,}}
\def \Der {{\rm Der\,}}
\def \Ham {{\rm Ham\,}}
\def \p {{\rm exp\,}}
\def \Id {{\rm Id\,}}
\def \d{\partial}%derivee partielle
\def\dt{\delta} 
\def\a{\alpha}
\def\b{\beta}
\def\e{\varepsilon}  
\def\g{\gamma}
\def\l{\lambda}
\def\p{\varphi}
\def\G{\Gamma}   
\def\D{\Delta}
\def \s{\sigma}
\def \to{\longrightarrow} 
\def\del{\nabla}
\def \< {{\langle }}
\def \> {{\rangle }}
\def \( {\left( }
\def \) {\right) }
\newcommand{\Bt}{{\mathcal B}}
\newcommand{\Ct}{{\mathcal C}}
\newcommand{\Dt}{{\mathcal D}}
\newcommand{\Mt}{{\mathcal M}}
\newcommand{\Ot}{{\mathcal O}}
\newcommand{\Rt}{{\mathcal R}}
\newcommand{\Ut}{{\mathcal U}}
\newcommand{\Vt}{{\mathcal V}}
\newcommand{\Wt}{{\mathcal W}}
\renewcommand{\mod}{{\rm  mod\,}}
\newcommand{\bu}{\bullet}
\newcommand{\bs}{\blacksquare}
\newcommand{\cro}{\times}
\newcommand{\lra}{\longrightarrow}
\title[The Herman invariant tori conjecture]{The Herman invariant tori conjecture}
\author{  Mauricio  Garay and Duco van Straten}
\begin{document}
\begin{abstract}   We study a normal form at a critical point of an
  analytic Hamiltonian somewhat reminiscent of the Pad\'e approximants in
  classical analysis. Under an arithmetic condition on the frequency we
  prove that the  normal form converges over a Cantor set. Using this result,
  we deduce that the Hamiltonian is integrable over a Cantor set. Using the notion of
  curvedness, we show that in the real elliptic case this implies the existence of a
  positive measure set of invariant tori near the critical point, of density equal to one.
  This solves a conjecture formulated by M. Herman in the 90's.
\end{abstract}
\maketitle
\begin{flushright}{\em To the memory of J.-C. Yoccoz.}\end{flushright}
 
\section{Statement of the theorem}

Investigations into normal forms of Hamiltonian systems can be traced back to the 
earliest beginnings of celestial mechanics in the 
works of Euler, Laplace, Delaunay, and others. However, the understanding of simple examples is still very far from satisfactory. A fundamental class of examples starts with a system of non-interacting oscillators
$$H_0=\frac12 \sum_{i=1}^d\omega_i (p_i^2+q_i^2) .$$
In this case, the solutions to the {\em Hamilton equations}
$$\left\{ \begin{array}{rcr}
 \dot{p}_i&=&-\d_{q_i} H\\
 \dot{q}_i&=&\d_{p_i} H
\end{array}
\right.
$$
are easily integrated: the trajectory
$$p_i=-\e_i \sin(\omega_it+\a_i),\ q_i=\e_i \cos(\omega_it+\a_i) $$
remains on the torus defined by the equations
$$ p_i^2+q_i^2=\e_i^2, i=1,2,\ldots,d.$$
The closure of a trajectory defines a sub-torus of dimension 
$$k:=\dim_\QM \QM\omega_1+\QM\omega_2+\ldots+\QM\omega_d .$$
These motions on tori are called {\em quasi-periodic}; we consider the case of periodicity (i.e. $k=1$)
as a particular case of quasi-periodicity.
When the dimension of the torus is not maximal, i.e. when $k <d$, the frequency vector
$\omega=(\omega_1,\dots,\omega_d) $ is called {\em resonant} and otherwise the torus is called
a {\em KAM torus}.  The subject of KAM theory is the study of the persistence of these tori
under various types of perturbations~\cite{Arnold_KAM,Kolmogorov_KAM,Moser_KAM}. Of course,
under perturbation, the frequency of motion may vary from torus to torus and there is a specific
{\em frequency map} that assigns the frequency of motion to a given KAM torus.

The following strong local version of the KAM theorem was conjectured by Herman~\cite{Herman_ICM}:
\begin{theorem}
 Consider a real analytic Hamiltonian function
 $$H_0=\frac{1}{2}\sum_{i=1}^d\omega_i (p_i^2+q_i^2) +\dots$$
 where the dots denote higher order terms in the Taylor expansion. If the frequency vector $\omega=(\omega_1,\dots,\omega_d)$ satisfies a Bruno condition, then the set of KAM tori has Lebesgue density equal to one at the origin.
 
\end{theorem}
In particular there are infinitely many KAM tori in any neighbourhood of the origin and the union of these form a positive measure set in $\RM^{2d}$.
The condition introduced by Bruno to which we refer, is an arithmetic condition, much weaker than the original {\em Diophantine conditions} introduced by Siegel and Kolmogorov and is now standard in the study of dynamical systems~\cite{Bruno,Kolmogorov_KAM,Siegel_linearisation,Siegel_vecteurs}. It is defined as follows: we start from a decreasing sequence $a=(a_n)$ of positive real numbers and define the set
$$\RM^d(a):=\{ \a \in \RM^d:\forall n \in \NM,\ \forall J \in \ZM^d\setminus \{0\},\ \| J \| \leq 2^n,\   (\a,J) \geq a_n \} $$
This set contains non-resonant vectors only and the sequence $(a_n)$ gives a practical way to control the distance to the resonance hyperplanes
\[ (\a, J)=0 .\]
The sequence $a$ is called a {\em Bruno sequence} if the infinite product
$\prod_{k \geq 0} a_k^{1/2^k} $ converges (to a positive number) or equivalently if
$$\sum_{k \geq 0} \frac{|\log a_k|}{2^k}<+\infty. $$
Note that the set $\RM^d(a)$ is contained inside the complement of the union of the resonance hyperplanes. A vector is said to satisfies a {\em Bruno condition}
if it lies in some $\RM^d(a)$, where $a$ is a Bruno sequence. 

The  {\em Lebesgue density of a set $X \subset \RM^d$ at a point $p$}  is defined  (when the limit exists) by
$$\dt(X,p):=\lim_{\e \to 0} \frac{m(X \cap B(p,\e))}{m(B(p,\e))} .$$
Here $B(p,\e)$ is the ball of radius $\e$ centred at $p$ and $m$ stands for the Lebesgue measure. A classical
theorem of Lebesgue asserts that for almost all points $p$ the limit exists and is equal to $0$ or $1$.

In the context of KAM theory, one usually assumes some non-degeneracy conditions on the formal frequency mapping,
for instance {\em R\"u{\ss}mann non-degeneracy conditions}~\cite{Russmann_KAM}.  These conditions involve
computations on the formal frequencies and therefore require computing higher order terms in the Birkhoff normal
form. In practice they are difficult to check.  Assuming such R\"u{\ss}mann non-degeneracy conditions, a version
of the above theorem was proven by Stolovitch in~\cite{Stolovitch_KAM}. It was believed by experts that R\"ussmann
conditions were necessary (see e.g.~\cite{Sevryuk_KAM}) and probably many of them considered the conjecture should
be wrong or at least not as the general rule for KAM theorem. The apparent contradiction between the believed necessity of R\"ussmann conditions and the Herman conjecture was pointed out back in 2014 by the first author~\cite{CRAS_KAM}.

The main reason to doubt the necessity of the R\"u{\ss}mann non-degeneracy is the following. In the most degenerate case $H=H_0$, the frequency map is constant and the space is foliated by invariant tori, most of which are KAM tori. Moreover, in the case that the formal  frequency mapping is constant, then a famous theorem of R\"u{\ss}mann
states that the system is Liouville integrable and that the normalising series converges (\cite{Russmann,Stolovitch,Vey})! In fact, this remarkable theorem led us to think that the conjecture, without any further conditions, must be true. 

Since the original arguments by the first author were posted on the ArXiv in 2012~(\cite{Herman}), there
have been many other attempts to settle the conjecture by more direct methods~(see e.g.~\cite{eliasson2015around}), but as far as we are aware of, no proofs were found. Ten years have passed since and we thought it might be useful
to simplify and clarify the original arguments in order to make it accessible to a more general audience. The
presentation in this paper does not depend on earlier unpublished results. It is also self-contained except for
the arithmetic density theorem of  \cite{arithmetic}.
%%%%%%%%%%%%%%%%%%%%%%%%%%%%%%%%%%%%%%%%%%%%%%%%%%%%%%%%%%%%%%%%%%%%%%%%%%%%%%
\section{Strategy of the proof}
In the proof we are going to present, there are three main strands of ideas:\\
\vskip0.3cm
\begin{itemize}
\item[ALGEBRA:] We introduce a new type of normal form which is a variation on the Birkhoff normal form that we
  call the {\em Hamiltonian normal form}. This normal form provides an approximation by rational functions, while
  the Birkhoff normal form is an approximation by polynomials. The Birkhoff normal form is the formal expansion at a
  point and consequently does not 'see' the nearby resonances. In contrast to this, the Hamiltonian normal form is
  a more global object with poles along the resonance hyperplane. The difference between the two is reminiscent
  to that between a Taylor series and its Pad\'e approximants.
 \item[ANALYSIS:] The convergence of the iteration process does not take place in a single Banach space, but uses various sequences of such spaces. We consider holomorphic functions on sets 
   $U_0 \supset U_1 \supset \dots $ which have either continuous, bounded or $L^2$ extension to the boundary. This is the classical framework introduced already by Kolmogorov and is well-known and common to all current approaches to
   KAM theory. But there are at least two differences to be noticed: first, we consider several of these different
   completions of the  space of holomorphic functions at the same time in a coherent framework, and second we {fix} the Cantor set on which the iteration takes place. So we shall prove that the Hamiltonian normal form leads to a convergent iteration over a {\em fixed Cantor set}, whereas in the usual  proofs this set is constructed step by step and changes under the transformations of the iteration.
   At first glance, this part might seem irrelevant to an expert that can provide any estimate whenever needed, but
   we hope that the details of our more systematic approach nevertheless attracts some attention, as it leads to a
   significant simplification of the set-up.
 \item[GEOMETRY:] The Hamiltonian normal form replaces the formal frequency mapping by a sequence of algebraic manifolds that we call the {\em frequency manifolds}. Locally around the origin these manifolds are tangent to the graphs of the Taylor polynomials of the Birkhoff normal forms; these frequency manifolds however are globally defined.  Given a decreasing sequence $a$ of positive real numbers and a submanifold $X \subset \CM^d \times \CM^d$, we have to consider the subset $X(a):=X \cap (\CM^d \times \CM^d(a))$.   We develop the theory of non-degeneracy for such submanifolds. This part can be considered as a KAM-tailored complement to the classical works in Diophantine geometry pursued by Kleinbock, Margulis, Arnold, Pyartli and others~\cite{Kleinbock_Margulis,Margulis,Pyartli}.
\end{itemize}
 
Our arguments are of general nature and we are confident that there will be no difficulty in adapting this result
to different but similar situations.

The structure of the paper is as follows:

In \S\ref{BirkhoffNormalForm} we fix the algebraic framework and notations of the paper. We
give a quick review of the classical Birkhoff normal form. Then we add  $d$ additional {\em Moser variables} $\tau_1,\tau_2,\ldots, \tau_d$ and recall the basic relation between the Moser variant of the Birkhoff normal form and the classical one.\\

In \S\ref{HamiltonianNormalForm} we describe the main idea of the iteration
that leads to our Hamiltonian normal form in the spirit of parametric KAM theory~\cite{Broer_Huitema_Sevryuk_families,Broer_Huitema_Sevryuk_book, Broer_Huitema_Takens}. To achieve this, one introduces $d$ further independent frequency variables  $\omega_1,\omega_2,\ldots,\omega_d$ and the Hamiltonian normal form  is a power series in the $2d$ variables  $\tau, \omega$. There are $d$ {\em frequency relations} between these
$\omega$ and $\tau$ variables. We show that by solving these frequency relations in terms of the $\tau$-variables,
the Hamiltonian normal form reduces to the Birkhoff normal form.  We construct an explicit solution of the homological equation and we give a precise formal definition of the iteration. We formulate some 
statements on the precise orders of the terms appearing in the iteration.
This section concludes the {\em algebra} part of the paper.

In \S\ref{SmallDenominators} we start the functional analytic part of the paper.  We describe in some detail our functional analytic
underpinning for dealing with the Banach spaces arising from the small
denominators. 

In \S\ref{KolmogorovSpaces} we sketch out our framework of {\em Kolmogorov spaces}
that is a convenient tool to handle the estimates that arise from the analysis
of convergence of our iteration process. 

In \S\ref{S::fixed_point},  we state and prove a fixed point theorem in Kolmogorov spaces. The theorem implies,
under a Bruno condition on the frequency, the convergence of the Hamiltonian Normal Form iteration over a Cantor-like set.  All estimates we
need are all simple applications of the general functional analytic constructions. This ends the functional analytic part  of the paper.

In \S\ref{ArithmeticDensity} we investigate the non degeneracy conditions of Diophantine geometry introduced by Kleinbock and Margulis
(\cite{Kleinbock,Kleinbock_Margulis}) from an abstract standpoint. Then we recall the arithmetic density theorem of \cite{arithmetic}. This is the only result to which we refer in the paper without proving it. The paper is however elementary and the measure estimate can in fact easily be improved (we intend to publish a result with a sharper estimate elsewhere).

In \S\ref{S::curvedness}, we show that the curvedness condition of Kleinbock and Margulis is automatically satisfied for the frequency map. This is done via studying the implicit equations for the more global frequency manifolds, which are defined by frequency relations between $\omega$ and $\tau$-variables. We study curvedness properties of the frequency manifolds and formulate them in terms of R\"u{ss}mann spaces, which are under some control during the iteration. This part confirms the pertinence of our framework.

In \S \ref{SolutionHermanConjecture} we restrict back to the real domain and
show in some detail how our result on curvedness implies the existence of a
positive measure set of invariant tori near an elliptic fixed point.
This completes our proof of the Herman conjecture.

\noindent {\em Acknowledgement.}{ It appeared to us that over the decades KAM theory has become a beautiful but
  very specialised domain, the entrance to which has been restricted to a small community of experts. Being not
  part of this community, our ignorance forced us to develop an independent version of KAM theory~\cite{groupes,Oberwolfach_2009,Abstract_KAM,Lagrange_KAM}. Accidentally, the Herman conjecture turned out to be a necessary corollary of these considerations~\cite{Herman,Oberwolfach_2012}. It seems fitting to us to quote Grothendieck, who once expressed our
  impressions so eloquently:\\

"{\em La mer s'avance insensiblement et sans bruit, rien ne semble se casser rien ne bouge l'eau est si loin on l'entend \`a peine.. Pourtant elle finit par entourer la substance r\'etive, celle-ci peu \`a peu devient une presqu'\^ile, puis une \^ile, puis un \^ilot, qui finit par \^etre submerg\'e \`a son tour, comme s'il s'\'etait finalement dissous dans l'oc\'ean s'\'etendant \`a perte de vue}\footnote{The sea grows imperceptibly, silent, nothing seems to break, nothing moves, the water is so far away you can hardly hear it... However, it ends up surrounding the restive substance, this one little by little becomes a peninsula, then an island, then an islet, which ends up being submerged in its turn, as if it had finally dissolved in the ocean stretching as far as the eye can see ...}~..."\\
\vskip0.1cm

The introduction of Banach space functors in 2018 gave us a deeper understanding on KAM theory and helped us to rewrite the original work from 2012 on the Herman conjecture. In this paper we only present the bare minimum of this theory and we refer the interested reader to~\cite{KAM_theory_I,KAM_theory_II,KAM_theory_III,Functors} for further details.

Our feeling is that there remains further foundational work to be done in KAM theory, which is in a state somewhat
comparable to that of Italian algebraic geometry, before the introduction of sheaves. In our opinion, the fact
that the Herman conjecture remained unproven so long illustrates that KAM theory is just at its infancy. 
  
When this work started,  J.-C. Yoccoz was among the very few enthusiastic dynamicists eager to bridge the frontiers between mathematical communities. After two months of numerous exchanges, Yoccoz was forced to stop due to health problems. In the meantime, he had made several influential and motivating remarks, and for this reason we dedicate this research to his memory.
 
\section{\bf The Birkhoff normal form}
\label{BirkhoffNormalForm}
\subsection{The algebraic context}
The idea of normal forms is usually formulated in terms of a set $X$ of
objects on which a group $G$ acts. The archetypal case, discussed by
Arnold in~\cite{Arnold_matrices}, is that of the
set $X=M(n,\CM)$ of $n \times n$ matrices on which the group $G=GL(n,\CM)$
acts by conjugation: $P \cdot M=PMP^{-1}$. However, in most interesting cases the set $X$ is
an infinite dimensional vector space: usually a space of functions or function germs and
the group $G$ a subgroup of diffeomorphisms or changes of variables. \\
\begin{figure}[htb!]
\includegraphics[width=0.4\linewidth]{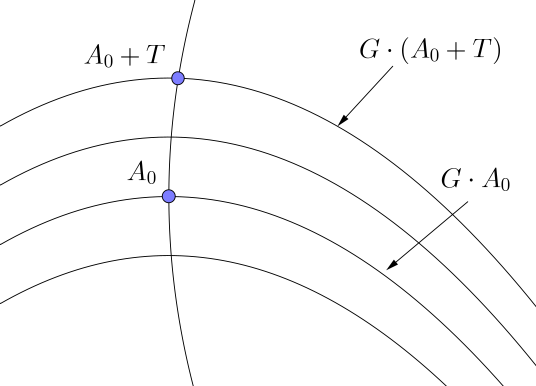}
\end{figure}
 
Depending on the problem at hand, these 
functions and diffeomorphisms may be $C^\infty$, analytic or even given by
formal power series. In the Hamiltonian context one is dealing with Hamiltonian 
functions and symplectomorphism. One then studies the orbits in $X$ and
tries to find an appropriate transversal slice $T$ to the orbits. 
By transforming an element inside the slice, one says it is brought to 
{\em normal form.}  Such transformations are usually obtained from an iteration
scheme.

Often the functions or elements of $X$ form a ring, and the coordinate transformations
from $G$ induce automorphisms of this ring. These automorphisms are often
obtained by exponentiating derivations, which is the algebraic avatar of
obtaining diffeomorphisms from flows of vector fields. In the Hamiltonian
context the ring carries a Poisson-bracket and the automorphisms we use
are required to be symplectomorphisms, i.e. preserve this Poisson-bracket.
So normal form problems for the symplectic group find their natural expression
in terms of Poisson-algebras.

%%%%%%%%%%%%%%%%%%%%%%%%%%%%%%%%%%%%%%%%%%%%%%%%%%%%%%%%%%%%%%%%%%%%%%%%%%
\subsection{The symplectic Poisson algebra}
We will be concerned with the structure of an analytic Hamiltonian system 
with $d$ degrees of freedom near a critical point of the form
\[H=\sum_{i=1}^d \alpha_i p_i q_i+O(3).\]
We assume that the {\em frequency vector}:
\[\alpha:=(\alpha_1,\alpha_2,\dots,\alpha_d) \in \CM^d\]
is {\em non-resonant}, i.e., its components $\alpha_i$ are  $\QM$-linearly independent. We are interested in the question which terms appearing in $H$ 
may be transformed away using symplectic coordinate transformations.\\

We can consider the Hamiltonian $H$ as an element of the formal power 
series ring
$$P:=\CM[[q,p]]:=\CM[[q_1,\dots,q_d,p_1,\dots,p_d]].$$ 
The Poisson bracket of $f,g \in P$, defined by
$$\{ f,g \}=\sum_{i=1}^d \d_{q_i}f\d_{p_i}g-  \d_{p_i}f\d_{q_i}g,$$
makes $P$ into a Poisson-algebra.
An element $h \in P$ is a power series that can be written as
\[h:=\sum_{a,b} C_{a,b} p^a q^b,\;\;\; C_{a,b}\in \CM, \]
where we use the usual multi-index notation, so that
\[p^aq^b:=p_1^{a_1}p_2^{a_2}\ldots p_d^{a_d} q_1^{b_1}q_2^{b_2}\ldots q_d^{b_d},\] 
and so on. We will assign weight $1$ to each of the variables, so 
that the monomial $p^aq^b$ has weight $|a|+|b|$.
We write $h=O(k)$ 
if $h$ only contains monomials of degree $\ge k$, and say that $h$ {\em has
order $k$}.  If $h$ is analytic, it is represented by a convergent series, 
and our usage of the $O$ corresponds to its usual meaning. Algebraically,
the filtration by order is the filtration of $P$ by the powers of the
maximal ideal $\mathcal{M}$:
\[ P \supset \mathcal{M} \supset  \mathcal{M}^2 \supset \mathcal{M}^3 \supset \ldots supset \{0\},\]
where
\[ \mathcal{M}^k:=\{h \in P\;|\; h = O(k) \}.\]

In a similar way, we can truncate a vector field by truncating its coefficients, but taking the shift of 
grading by $1$ into account (due to the fact the derivative decreases the degree by one). Thus a vector field of order $d$ maps the space of power series of order $k$ to power series of order $d+k$.
 
\begin{notation} If $h$ belongs $P$ or a filtered $P$-module, we denote by
$$\left[ h\right]_i^j $$
for the sum of terms of $h$ of weight (=degree) $\geq i$ and $<j$, 
so that $\left[h\right]_i^{i+1}$
represents the part of $h$ of pure weight $i$. When $j=+\infty$ we omit the letter $j$, when $i=0$ we omit the letter $i$. 
\end{notation}

\begin{definition}
A derivation $v \in Der(P)$ that preserves the Poisson-bracket: 
\[ v(\{f,g\})=\{v(f),g\}+\{f,v(g)\}\]
is called a {\em Poisson-derivation} and we denote by $\Theta(P)$ the vector 
space of all Poisson-derivations or Poisson vector fields.
\end{definition}
The map 
\[  P \longrightarrow \Theta(P),\;\;\; h \mapsto \{-,h\}\]
associates to $h$ the corresponding Poisson-derivation, usually called the
{\em Hamiltonian vector field} of $h$. If $h=O(k)$ and $f=O(l)$, then clearly 
$\{f,h\}=O(k+l-2)$, so the vector field $v:=\{-,h\}$ is said to be 
{\em of order $k-2$}, although the coefficients of the vector field $v$ are $O(k-1)$. 
The following is immediate:\\
\begin{lemma} If $h=O(3)$, then one can {\em exponentiate} $v$ and obtain a Poisson 
automorphism of the ring $P$:
\[e^v=Id+\{-,h\}+\frac{1}{2!}\{\{-,h\},h\}+\ldots \in Aut(P).\]
\end{lemma}

If $v$ happens to be analytic then it defines a vector field in a neighbourhood of 
the origin and our derivation $v$ is simply the Lie derivative along this vector field. 
The formal power series $e^v$ is in that case an analytic automorphism and thus defines 
an associated analytic change of variables, the time $=1$ flow of the vector field. 

%%%%%%%%%%%%%%%%%%%%%%%%%%%%%%%%%%%%%%%%%%%%%%%%%%%%%%%%%%%%%%%%%%%%%%%%%%%%%%%%%%%%%%%%%%
\subsection{The Birkhoff normal form}
If we let
\[h_0:=\sum_{i=1}^d \alpha_i p_iq_i ,\]
then
\[ \{h_0,p^aq^b\}=(\alpha,a-b) p^aq^b, \]
where $(-,-)$ denotes the standard euclidean scalar product. 
So if $\alpha$ is non-resonant, then each monomial $p^aq^b$ with $a \neq b$
appearing in $H=h_0+O(3)$ can be removed by an application of the
derivation
\[ v_{a,b}=\{-,\frac{1}{(\alpha,a-b)}p^aq^b\} .\]

As the application of $e^{-v_{a,b}}$ to $H$ will remove the term $p^aq^b$ from $H$,
we see that one can construct a sequence of automorphisms 
\[ \varphi_0:=e^{-v_0},\;\;\;\varphi_1:=e^{-v_1},\;\;\;\varphi_2:=e^{-v_2},\ldots \in Aut(P),\]
that remove successively all monomials $p^aq^b$, $a \neq b$ from the Hamiltonian $H$.

To write this iteration more explicitly, let us introduce some notation:

We consider the $\CM$-linear map
$$j:P \to \Theta(P),\ p^aq^b \mapsto \left\{ \begin{matrix} {\displaystyle \{-,\frac{1}{(\alpha,a-b)}p^aq^b\}}  &\text{ if } a \neq b \\
\ \\
0 &\text{ otherwise }\end{matrix} \right. $$
Then we define the iteration by putting $H_0=H$ and
\begin{align*}
v_k&=j([H_k]_{k+3}^{k+4})\\
H_{k+1}&=e^{-v_k}H_k
\end{align*}
so that the automorphism
\[\Phi_k :=\varphi_{k-1}\ldots\varphi_1 \varphi_0,\ \p_i=e^{-v_i}\]
maps $H$ to $H_k$. The infinite composition
\[\Phi:=\ldots \varphi_k \varphi_{k-1}\ldots \varphi_1 \varphi_0 \in Aut(P)\]
is a formal symplectic coordinate transformation that removes all monomials $p^aq^b$, $a \neq b$ from our Hamiltonian $H$, hence we see

\begin{theorem} For any non-resonant $H=h_0+O(3) \in P$ there exists an automorphism
 $\Phi \in Aut(P)$ such that
\[ \Phi(H)=B_H,\]
where $B_H$ is a series of the form
$$B_H:=\sum_{a \in \NM^d}C_ap^aq^a. $$
The series $B_H$ is called the {\em Birkhoff normal form} of $H$. 
\end{theorem}

There exist several variants of this algorithm, differing in details. For example, one may remove several terms at the same time,  which may lead to different normalising transformations $\Phi$, but it is known that different choices lead to the same series $B_H$. 

\begin{example}
Take $d=1$ and consider the Hamiltonian function
$$H(q,p)=pq+p^3+q^3. $$
We determine a sequence of vector fields $v_0, v_1, v_2,\ldots$, where $v_k$ obtained be removing simultaneously all terms of degree $k+3$. The iteration then begins with
\begin{align*}
H_0&=pq+p^3+q^3\\
v_0&=\{-, 1/3(p^3-q^3)\}\\
H_1&=pq-3p^2q^2+4p^4q+4pq^4+O(6)\\
v_1&=0\\
H_2&=pq - 3q^2p^2 + 4p^4q + 4q^4p - 3/2p^6 - 3/2q^6 - 12p^3q^3 +O(7)\\
v_2&=\{-, 4/3(p^4q-pq^4)\}\\
H_3&= pq- 3p^2q^2  -12p^3q^3 +O(7) \\
\dots
\end{align*}
From this we can read off the first three terms of the Birkhoff normal form and continuing the process one finds
$$B_H(\tau)=\tau-3\tau^2-12\tau^3-105 \tau^4-1206\tau^5-16002\tau^6-232416\tau^7-3592377\tau^8+o(\tau^8) $$
where $\tau=pq$. (One can show that in this case the inverse power series 
to $B_H(s)$ is a hypergeometric function: $\tau= b \cdot \mbox{}_2F_1(1/3,2/3,1;27 b)$, $b:=B_H(\tau)$.)
\end{example}
%%%%%%%%%%%%%%%%%%%%%%%%%%%%%%%%%%%%%%%%%%%%%%%%%%%%%%%%%%%%%%%%%%%%%%%%%%
\subsection{ The Moser Extension}
\label{SS::Moser}
As the monomials $p_iq_i$ ($i=1,2,\ldots,d$) Poisson commute with the Birkhoff normal form $B_H$, 
Birkhoff normalisation implies that any non-resonant Hamiltonian $H$ is {\em formally completely integrable}. To express this in a manifest way, it is useful to enlarge the ring $P$ and consider
\[Q:=\CM[[\tau,q,p]]=\CM[[\tau_1,\ldots,\tau_d,q_1,\ldots,q_d,p_1,\ldots,p_d]],\] 
with the extra $\tau$-variables, introduced by Moser. With the same definition
of the Poisson-bracket as before, $Q$  becomes a Poisson algebra with Poisson 
centre $Q_0:=\CM[[\tau]]$. We will assign weight $=2$ to the variables $\tau_i$,
so that the $d$ elements
\[ f_i:=p_iq_i-\tau_i \in Q\]
are homogeneous of degree two. These elements Poisson commute, $\{f_i,f_j\}=0$,
and we obtain a Poisson commuting sub-algebra
\[ \CM[[\tau,f]]=\CM[[\tau,f_1,f_2,\ldots,f_d]]=\CM[[\tau,p_1q_1,\ldots,p_dq_d]]\]
containing $Q_0$. The $f_1,f_2,\ldots,f_d$ also generate an ideal\footnote{Here and in the sequel, the notation $\langle f_1,\dots,f_k\rangle$ stands for the ideal generated by elements $f_1,\dots,f_k$.}
\[I=\langle f_1,f_2,\ldots,f_d \rangle  \subset Q=\CM[[\tau,q,p]]\] 
and clearly, the canonical map
\[  \CM[[\tau,p,q]] {\to} \CM[[p,q]],\;\;\;p_i \mapsto p_i,\;q_i\mapsto q_i,\; \tau_i \mapsto p_iq_i .\]
induces an isomorphism of the factor ring $Q/I$ with our original ring $P$:
\[ Q / I \stackrel{\sim}{\to} P . \]
Although $f_i$ maps to zero under this map, the derivation $\{-,f_i\}$ induces
the non-zero derivation $\{-,p_iq_i\}$ on $P$, so the map $Q \to P$ is {\em not}
a Poisson-morphism. The ideal $I^2=\langle f_1,\dots,f_k \rangle^2 \subset Q$ is the square of the ideal $I$, i.e. generated by the elements $f_if_j$, $1 \le i,j\le d$, and plays a very distinguished role in dynamics. 

\begin{lemma} If $T\in I^2$, then $H$ and $H+T$ induce the same Hamiltonian vector 
field on $Q/I=P$. 
\end{lemma}
\begin{proof}
If $T \in I^2$, then $\{h,T\} \subset I$. As a consequence, the difference
between $\{h,H\}$ and $\{h,H+T\}$ belongs to $I$, which is mapped to $0$ in $P$.
\end{proof}

Extending the multi-index notation in an obvious way, we can write
\[ p^aq^a=(\tau+f)^a=\tau^a+\sum_{i=1}^d \partial_{\tau_i} \tau^a f_i+I^2 .\]
The term $\tau^a$ is in the centre of $Q$, whereas the above lemma
implies that  $p^aq^a$ and $\sum_{i=1}^d\d_{\tau_i}\tau^a f_i$ define the 
same derivation on the ring $P=Q/I$.

We can consider the Birkhoff normal form series $B(pq)=B_H(pq)$ as an element 
of $Q$. When we write $pq=\tau+f$, then we find:
\[ B(\tau+f)=B(\tau)+\sum_{i=1}^db_i(\tau)f_i \ \mod I^2 .\]
The formal power series $b_1,\dots,b_d \in \CM[[\tau]]$ are obtained as partial derivatives of $B$, 
considered as a series in the $\tau_i$-variables:
\[b=(b_1,\dots,b_d)=\del B(\tau) .\]

The first term $B(\tau)$ we also call the Birkhoff normal form, written in the
$\tau$-variables. It belongs to the Poisson centre $Q_0$ and is dynamically
trivial, but gets mapped to the non-trivial element $B_H \in P$. The second
term $\sum_{i=1}^db_i(\tau)f_i$ carries the dynamical information in $Q$, but is
mapped by the canonical map $Q \to P$ to zero. 

One has $b(0)=\alpha$, and the higher order terms describe how the frequencies change with $\tau$ and for 
this reason we call it the {\em formal frequency map}.  If the system happens to be integrable, then the series are convergent and the vector $b(\tau)=(b_1(\tau),b_2(\tau),\dots,b_d(\tau))$ is the frequency of motion on the corresponding manifold defined by
$f_i(\tau,q,p)=0$, $i=1,2,\ldots,d$.
\begin{example}
 Take $d=1$, the Hamiltonian
 $$H(q,p)=B(qp)=pq+(pq)^2 $$
 is already in Birkhoff normal form. In the Moser algebra we have
 \begin{align*}
 H(q,p) &= \tau+\tau^2+(pq-\tau)+2\tau (pq-\tau)+(pq-\tau)^2\\
   &= B(\tau)+B'(\tau) f+f^2\\
 &=(1+2\tau) pq \ \mod I^2 \oplus \CM[[\tau]] .
 \end{align*}
\end{example}

%%%%%%%%%%%%%%%%%%%%%%%%%%%%%%%%%%%%%%%%%%%%%%%%%%%%%%%%%%%%%%%%%%%%%%%%%%%%5
\section{\bf  The Hamiltonian Normal Form}
\label{HamiltonianNormalForm}
%%%%%%%%%%%%%%%%%%%%%%%%%%%%%%%%%%%%%%%%%
\subsection{Introductory example}
 Consider again the anharmonic oscillator
$$H(q,p)=pq+p^3+q^3. $$
We will give an alternative method to bring it back to a normal form. At
first encounter it may look silly and just more cumbersome, but we will see
that in general it leads to a great simplification of the set-up.
First, we detune the frequencies and consider the function:
$$F_0= (1+\omega)pq+p^3+q^3.$$
The idea is then to take back this function to 
$$A_0=(1+\omega)pq$$
via a Poisson automorphism.
The initialisation of our iteration is therefore
\begin{align*}
A_0&=(1+\omega)pq,\\
F_0&=A_0+p^3+q^3=(1+\omega)pq+p^3+q^3.
\end{align*}
Our first objective is to get rid of the cubic term. This is accomplished by observing that
$$p^3+q^3=\{ A_0,\frac{1}{3 (1+\omega)} (p^3-q^3)\}$$
So we choose
$$v_0=\{ -, \frac{1}{3(1+\omega)} (p^3-q^3)\}.$$
so that the automorphism $\p_0=e^{-v_0}$ transforms $F_0$ into
\begin{align*}
F_1(q,p)&=e^{-v_0}(A_0+p^3+q^3)  \\
  &=A_0-v_0(p^3+q^3)+\frac{1}{2!}v_0^2(A_0)+O(6)\\
&=(1+\omega)pq-\frac{3}{1+\omega}q^2p^2+\frac{4}{(1+\omega)^2}(p^4q+q^4p)+O(6).\\
\end{align*}
Now $H$ is recovered from $F_0$ by setting $\omega=0$. As the automorphism $\p_0$ sends the line $\omega=0$ to
itself, so $H=H_0$ is mapped to the restriction of $F_1$ to $\omega=0$:
$$H_1(q,p)= pq-3 q^2p^2+4(p^4q+q^4p)+O(6)$$
So in this way we got rid of the cubic term in $H_0$.

Let us now proceed to the next order.
Now we look at the terms of degree 4 and 5. The degree 5 term can be eliminated 
by a Hamiltonian vector field:
$$\{ A_0, \frac{4}{3(1+\omega)^3} (p^4q-q^4p)\}=\frac{4}{(1+\omega)^2}(p^4q+q^4p), $$
but something new happens: to suppress the second term $-\frac{3}{1+\omega}q^2p^2$ we need
a non-Hamiltonian Poisson vector field. This is done in two steps. First we note that
$$q^2p^2=(qp-\tau)^2+2\tau qp-\tau^2 .$$
The reason for rewriting the term in this way, is the fact that the terms in the space
$I^2\oplus \CM[[\omega,\tau]]$
{\em do not change the Hamiltonian derivation on the curve $qp=\tau$}.
So we choose 
$$v_1=\{ -, \frac{4}{3(1+\omega)^3} (p^4q-q^4p)\}-\frac{6\tau}{(1+\omega)}\d_\omega $$ 
and get that
\begin{align*}v_1(A_1)&=\frac{4}{(1+\omega)^2}(pq^4+qp^4)-\frac{6\tau}{1+\omega} pq\\
  &=\frac{4}{(1+\omega)^2}(pq^4+qp^4)-\frac{3}{1+\omega} q^2p^2 \ \mod I^2 \oplus \CM[[\omega,\tau]] .
\end{align*}
The difference  $[F_1]_4^6-v_1(A_1)$ is seen to be
\[ \frac{ 6\tau}{1+\omega}pq -\frac{3}{1+\omega}p^2q^2=\frac{3\tau^2}{1+\omega}-\frac{3(pq-\tau)^2}{1+\omega} \in I^2+\CM[[\omega,\tau]]\]
so we get that
$$F_2=e^{-v_1}F_1= (1+\omega)\tau+\frac{3}{1+\omega}\tau^2+(1+\omega)f +\frac{-3}{1+\omega} f^2+O(6).$$
So the transformation did not bring $F_1$ back to $A_0$ as there are, like in the Birkhoff normal form, terms which cannot be eliminated by the iterative process. But unlike the Birkhoff normal form, these
residual terms are irrelevant for studying the dynamics!
Now what happens to our function $H_1$? It is mapped to $H_2$, the restriction of $F_2$ to 
$$\p_1(\omega)=0,\ \p_1=e^{-v_1} $$
As $v_1$ contains a non-Hamiltonian term $-6\frac{\tau}{1+\omega}\partial_w$, the line $\omega=0$ is
not preserved and more precisely we have:
$$\p_1(\omega)=\omega-\frac{6\tau}{1+\omega} .$$
If we solve the equation $\phi_1(\omega)$ for $\omega$,
$$ \omega-\frac{6\tau}{1+\omega} =0,$$
and substitute the result in $F_1$, then we recover the first terms of the Birkhoff normal form. 

In the next step, we define
\begin{align*}
A_2&=(1+\omega)\tau+\frac{3}{1+\omega}\tau^2+(1+\omega)f +\frac{-3}{1+\omega} f^2\\
   &=(1+\omega)pq +\frac{3}{1+\omega}\tau^2 \;\; \mod I^2 ,
\end{align*}
where $f:=pq-\tau$, $I=(f)$ so that 
\[ F_2=A_2 +O(6).\]
Then we have to look at the terms of degree
$6,7,8,9$ appearing in $F_2$ and determine a vector field $v_2$
\[v_2(A_2) =[F_2]_6^{10} +t , \;\;\; t \in I^2+\CM[[\omega,\tau]] .\]
To see these terms, we have to keep much more terms in the expansions. 
We find
\[A_{H,0}=A_{H,1} =(1+\omega)\tau, \;\;\;\]
\[A_{H,2}=(1+\omega)\tau+\frac{3\tau^2}{1+\omega},\]
\[A_{H,3}=(1+\omega)\tau+\frac{3\tau^2}{1+\omega}+\frac{6\tau^3}{(1+\omega)^3} - \frac{9\tau^4}{(1+\omega)^5}.\]
In this example the denominators appearing are rather simple; in examples
with more variables much more complicated denominator structure arise.
%%%%%%%%%%%%%%%
\subsection{The small denominator ring}
The above process shows the appearance of rational functions. It is important that the Hamiltonian
normal form iteration is not only an iteration in power series, but makes sense for series with
coefficients which are rational functions in the frequencies. The Birkhoff normal relates to the
expansion at some point of these expressions. But if we truncate a Taylor expansion say
$$\frac{1}{1-x}=1+x+x^2+\dots $$
and get polynomials
$$1+\dots+x^n $$
then these will hardly give any information beyond the disk of radius one. In the case of Hamiltonian mechanics with more than one degree of freedom, we have to imagine that during the iterative process the resonance hyperplane define a dense set. So apparently there is little hope that the Birkhoff normal form will be of any
use~\footnote{The work of Stolovitch cited in bibliography seems to indicate the contrary however...}.
%To give an image, we might say that the Hamiltonian iteration jumps through resonances while the Birkhoff normal form is stopped by them.

As we saw in our example, we need to consider the Moser variables $\tau$ independently from the frequency variables, which means that we add variables $\omega_1,\dots,\omega_d$.
For a fixed frequency vector $\alpha \CM^d$, we define the ring $SD_{\alpha}$ of {\em small denominators} at $\alpha$ as the subbing of the field $\CM(\omega)=\CM(\omega_1,\omega_2,\ldots,\omega_d)$ of rational functions, defined by 
localisation of $\CM[\omega]=\CM[\omega_1,\omega_2,\ldots,\omega_d]$ with respect with the multiplicative subset $S$ generated by all linear polynomials $(\a+\omega, J), J \in \ZM^n\setminus \{0\}$:
\[SD_{\alpha}:=\CM[\omega]_{S}:=\CM[\omega,\frac{1}{(\a+\omega,J)}, J \in \ZM^n\setminus\{0\}]
\subset \CM(\omega) .\]
The elements of this ring may have poles along the resonance hyperplanes
$$H_J=\{ \omega \in \CM^d: (\a+\omega,J)=0 \} $$
We will be concerned with the following ring
\begin{definition}
$$R:=SD_{\alpha}[[\tau,p,q]] \subset \CM[[\omega,\tau, p,q]],$$
a subring of the power series in $4d$ variables 
$$\omega_1,\omega_2,\ldots,\omega_d,\tau_1,\ldots,\tau_d,q_1,\ldots,q_d,p_1,\ldots,p_d .$$
We provide $R$ with the standard Poisson bracket, so that the Poisson center of
$R$ is the ring
\[R_0:=SD_\a[[\tau]]=\subset \CM[[\omega_1,\ldots,\omega_d,\tau_1,\ldots,\tau_d]].\]
\end{definition}
Note that the variable $\omega$ has a more {global} character. The relevant filtration of $SD_\a$ is
given by the order of the poles along the resonance hyperplanes, but we will not use it in this paper.

It will be shown that, contrary to what is expected for the Birkhoff normal
form, this new normal form iteration converges in an appropriate sense. As 
a result, we have a better control over the invariant tori.
%%%%%%%%%%%%%%%%%%%%%%%%%%%%%%%%%%%%%%%%%%%%%%%%%%%%%%%%%%%%%%
The following sub-algebra of $R$ is of importance for our discussion:

\begin{definition} We call the Poisson-commutative algebra
\[ M:=SR_0[[f]]=R_0[[pq]] \subset R,\]
the {\em Moser-algebra} of $R$. We also put
\[M_0:= R_0+ I^2 \cap M \subset M,\]
where 
\[ I:=\langle f_1,f_2,\ldots,f_d \rangle  \subset R.\]
\end{definition}

As before, we denote the vector space of Poisson derivations of the $R$ by 
$\Theta(R)$, which has the structure of a module over the Poisson centre 
$R_0$. One has:

\begin{lemma} The Poisson derivations of $R$ decompose into Hamiltonian and non-exact parts:
$$\Theta(R)=\Ham(R) \oplus \Der(R_0)  .$$
\end{lemma}
So an element of $\Theta(R)$ is of the form
\[ v=\{-, h \}+ w\] 
with 
\[ w=\sum_{i=1}^d a_i \frac{\partial}{\partial \omega_i} + b_i \frac{\partial}{\partial \tau_i}, \;\;\;a_i,\;b_i \in R_0.\]

%%%%%%%%%%%%%%%%%%%%%%%%%%%%%%%%%%%%%%%%%%%%%
\subsection{{The Hamiltonian normal form iteration}}

\begin{definition} The {\em $\omega$-extension of} $H \in P$ is the element
\[ F= H+\sum_{i=1}^d \omega_i p_i q_i \in R \]
For the $\omega$-extension of $h_0=\sum_{i=1}^d\alpha_ip_iq_i$ we keep a 
special notation:
\[A_0:=\sum_{i=1}^d(\alpha_i+\omega_i) p_iq_i \in R .\]
\end{definition}
So $A_0$ is obtained from $h_0$ by {\em detuning} the frequencies in the most 
general way. One also may interpret it as a versal deformation of $h_0$. 
Starting from a Hamiltonian 

\[ H=\sum_{i=0}^d \alpha_i p_iq_i+O(3),\]
we first form the $\omega$-extension of $H$:
\begin{align*}
F_0&:=H+\sum_{i=1}^d \omega_ip_iq_i\\
   &=A_0+O(3)\\
   &=A_0+[F_0]_3^4+O(4).
   \end{align*}

When we solve a {\em homological equation} of the form
$$v_0(A_0)=[F_0]_3^4 +t_0,\;\;\; t_0 \in M_0 ,$$ 
we obtain a Poisson derivation $v_0$, which we can 
exponentiate to produce an automorphisms  $e^{-v_0}$. The application of $e^{-v_0}$ to $F_0$ produces $F_1$, where this term is  removed; we put $A_1=A_0+t_0$. In this particular it turns out that $t=0$ but at the next level we have to solve
$$v_1(A_1)=[F_1]_4^6 + t_1 ,\;\;\; t_1 \in M_0$$ 
for the degree $4$ and $5$ part of $F_1$ on $A_1$ and, as a general rule $t_1 \neq 0$. Then the application of $e^{-v_1}$ to $F_1$ produces $F_2$, where now these terms of degree $4$ and $5$ are removed, but certain terms in $M_0$ are introduced. These remaining terms we add to $A_1$ and obtain $A_2$. Next we solve the homological equation for the terms of degree $6,7,8,9$ of $F_2$, but now on  $A_2$, etcetera. Thus we obtain a by iteration a sequence of triples
\[ (F_n, A_n, v_n ), \;\;\;n=0,1,2,\ldots\]

\subsection{Ordering the terms of the expansion}
For convenience of the reader we include the following diagram that
indicates the degrees of the quantities that appear in the iteration.\\
\vskip0.1cm
\[
\begin{array}{|c||c|c|c|c|c|c|c|c|c|c|c|c|c|c|c|c|c|}
\hline
 &2&3&4&5&6&7&8&9&10&11&12&13&14&15&16&17&18\\
\hline
\hline
F_0&\bu&\cro&\bs&\bs&\bs&\bs&\bs&\bs&\bs&\bs&\bs&\bs&\bs&\bs&\bs&\bs&\bs\\
\hline
F_1&\bu&\circ&\cro&\cro&\bs&\bs&\bs&\bs&\bs&\bs&\bs&\bs&\bs&\bs&\bs&\bs&\bs\\
\hline
F_2&\bu&\circ&\bu&\circ&\cro&\cro&\cro&\cro&\bs&\bs&\bs&\bs&\bs&\bs&\bs&\bs&\bs\\
\hline
F_3&\bu&\circ&\bu&\circ&\bu&\circ&\bu&\circ&\cro&\cro&\cro&\cro&\cro&\cro&\cro&\cro&\bs\\
\hline
F_4&\bu&\circ&\bu&\circ&\bu&\circ&\bu&\circ&\bu&\circ&\bu&\circ&\bu&\circ&\bu&\circ&\cro\\
\hline
\end{array}
\]\ \\
\vskip0.2cm
The bullets $\bu$ and circles $\circ$ represent terms of $A_n $.   They belong to the $M_0$ part of the Moser-algebra : the $\bu$ terms are constant in columns, the circles  $\circ$ are zero, as the Moser-algebra only has terms of even degree.\\
So $\bu$ and $\circ$ represents the {\em normal form range}, consisting 
of terms of $F_n$ of degree
\[ 2 \le degree <2^n+2\]
The crosses $\times$ represent the terms of $F_n$ that determine the derivations $v_n$. 
These make up what we call the {\em active range} of degrees: 
\[2^n+2 \le degree <2^{n+1}+2 .\] 
The black squares $\bs$ represent the terms of $F_n$ that of degree higher
than $2^{n+1}+2$ that do not directly influence the next iteration step, but of course 
must be carried along.\\

We now rewrite the iteration in a form where this trichotomy in degrees 
is manifest. Consider the decomposition
\[ F_n:=A_n+M_n+U_n=\bu+\times+\bs,\]
where
\[A_n:=[F_n]^{2^n+2},\;\;\; M_n:=[F_n]_{2^n+2}^{2^{n+1}+2},\;\;\; U_n:=[F_n]_{2^{n+1}+2},\]
are the lower, middle and upper part of $F_n$.

The Hamiltonian normal form iteration produces a sequence $(F_n,A_n,v_n)$: 
the series 
\[ F_0=H+\sum_{i=1}^d \omega_i p_iq_i =A_0+O(3)\]
is transformed by 
\[\Phi_n:=e^{-v_{n-1}}\dots e^{-v_0} \]
to a series of the form
\[F_n=A_{n}+O(2^{n}+2) .\]
If we let $n$ go to $\infty$, we obtain a formal Poisson automorphism
\[\Phi_{\infty}:=\ldots e^{-v_n}\dots e^{-v_0} \in Aut(R),\]
and obtain
\[F_\infty:=\Phi_{\infty}(F_0)=A_\infty,\;\;\;A_{\infty} \in A_0+M_0\]
The automorphism $\Phi_{\infty}$ transforms the perturbation $F_0=A_0+O(3)$
back to the normal form $A_0$, plus terms that have no effect on
the dynamics.

\begin{definition} Let $H=\sum_{i=1}^d \alpha_i p_iq_i+\ldots \in P$.
The {\em $k$-th Hamiltonian normal form of $H$} is the series
\[A_{H,k} :=A_k \;\; \mod I \in \CM[[\omega,\tau]],\]
obtained from $A_k$ by the substitution $p_iq_i=\tau_i$.
The {\em Hamiltonian normal form of $H$} is the series
\[ A:=A_H:=A_{\infty}\;\; \mod I  \in \CM[[\omega,\tau]].\]
\end{definition}

%%%%%%%%%%%%%%%%%%%%%%%%%%%%%%%%%%%%%%%%%%%%%%%%%%%%%%%%%%%%%%%%%%%%%%%%%%%%%
\subsection{ The homological equation}
\label{SS::Homequation}
We now describe a specific way to solve the homological equation for the $v_k$.
In the algorithm for the Birkhoff normal form the derivations $v_0, v_1, v_2, \ldots$ were determined by applying them to the fixed element $h_0$, whereas 
here the sequence is determined by applying them to elements $A_0, A_1, A_2,\ldots$ that is determined in the iteration process. The infinitesimal action 
$$ \Theta(R) \to R,\;\; v \mapsto v(A_0) $$
on
\[A_0:=\sum_{i=1}^d(\alpha_i+\omega_i) p_iq_i \in R .\]
takes a simple form in the monomial basis:\\
\begin{align*}
\{ A_0, p^a q^b\}&=(\alpha+\omega,a-b)p^aq^b,\\
\d_{\omega_k} A_0&=p_kq_k,\\
\d_{\tau_k} A_0&=0 .\\
\end{align*}

\begin{definition} 
\label{D::L}
We define a $\CM[[\omega,\tau]]$-linear map 
\[L: R \to \Theta(R)=\Ham(R) \oplus \Der(R_0), m \mapsto Lm \]
by setting for $a \neq b$:
\[ Lp^aq^b :=\{-,\frac{1}{(\alpha+\omega,a-b)}p^aq^b\}.\]
For $a=b$, or more generally for a series 
\[m=g(p_1q_1,p_2q_2\,\ldots,p_dq_d)=g(pq)\]
we set
\[Lm:=\sum_{i=1}^d \frac{\partial g(\tau)}{\partial \tau_i}\d_{\omega_i}.\]
\end{definition}

%We note that the first case of the definition applies to the monomials in
%the kernel of the projection map $\pi$, whereas the second part of
%the definition applies to the elements of the Moser sub-algebra $M$.
 
\begin{definition} For $A=A_0+T,\ T \in I^2$ we define a linear map
\[ j_{A}: R \to \Theta(R)\]
in terms of $L$ by the formula
\[j_{A}: m \mapsto Lm-L(Lm(T))=L(m-Lm(T))\]
\end{definition}

\begin{proposition} For any $A=A_0+T$, $T \in I^2$ and  any $m \in R$, we have 
\[j_A(m)(A)=m+t,\;\;t \in R_0+I^2,\]
\end{proposition} 
\begin{proof}
First, for $A=A_0$ we have $j_{A_0}=L$.
For $m=p^aq^b$ with $a \neq b$ we have 
\[j_{A_0}(m)(A_0)=\{A_0,\frac{1}{(\alpha+\omega,a-b)}p^aq^b\}=p^aq^b=m\]
and for $m=g(pq)$ we have, with $g_i=\partial_{\tau_i} g$,
\begin{align*}
j_{A_0}(m)(A_0)&=\sum_{i=1}^d g_i(\tau) \frac{\partial A_0}{\partial \omega_i}=\sum_{i=1}^n g_i(\tau)p_iq_i\\
          &=\sum_{i=1}^d g_i(\tau) f_i \ \mod R_0=g(pq)\ \mod R_0+I^2,
\end{align*}
where we used the Taylor expansion
\[g(pq)=g(\tau+f)=g(\tau)+\sum_{i=1}^d g_i(\tau) f_i\ \mod I^2.\]
This shows the correctness for $T=0$. For the general case $A=A_0+T$, we get

\begin{align*}
 j_A(m)(A_0+T)&=Lm(A_0)+Lm(T)-L(Lm(T))A_0-L(Lm(T))(T) \\
              &=m+Lm(T)-Lm(T)-L(Lm(T))(T)\ \mod R_0+I^2\\
              &=m-L(Lm(T))(T)\ \mod R_0+I^2.\\
 \end{align*}
Because $T \in I^2$, it follows that $Lm(T) \in I$. Furthermore,
for any $g \in I$, we have $Lg(T) \in I^2$. This can be seen by writing
$g$ as $\CM[[\omega,\tau]]$-linear combination of terms of the
form $p^aq^b f_i$. If $a \neq b$, $\{T,p^aq^bf_i\} \in I^2$, whereas for
$a=b$, we obtain a combination of terms $\partial_{\omega_i}T$, which is in
$I^2$, as the generators $f_i=p_iq_i-\tau_i$ are independent of $\omega_i$. 
\end{proof}

%%%%%%%%%%%%%%%%%%%%%%%%%%%%%%%%%%%%%
\subsection{The HNF (Hamiltonian Normal Form) iteration}
With an Hamiltonian $H=\sum_{i=1}^d \alpha_ip_iq_i+O(3) \in P$ as input,
we begin the iteration with the {\em initialisation step} 
\begin{align*}
F_0&=H+\sum_{i=1}^d \omega_ip_iq_i=A_0+O(3)\\
A_0&=\sum_{i=1}^d(\alpha_i+\omega_i)p_iq_i\\
v_0&=j_{A_0}\left( [F_0]_3^4\right).\\
\end{align*}

The next terms are determined by the {\em iteration step}: from $F_n, A_n$ we then obtain
\begin{align*}
F_{n+1}&= e^{-v_{n}}F_{n},\\
A_{n+1}&=A_{n}+\left[F_{n}-v_n(F_n)\right]_{2^{n}+2}^{2^{n+1}+2},\\
v_{n+1}&=j_{A_{n+1}}(\left[F_{n+1}\right]_{2^{n+1}+2}^{2^{n+2}+2})
\end{align*}
It is useful to define the {\em increments}
\[S_{n+1}:=\left[F_{n}-v_n(F_n)\right]_{2^{n}+2}^{2^{n+1}+2},\]
so that:
\[A_{n+1}=A_n+S_{n+1} .\]
%\newpage
There are a few simple but important points to notice:

\begin{proposition}
%\vskip 10pt
 \begin{enumerate}[{\rm i)}]
\item The derivation $v_n$ has order $2^n$, i.e. $v_{n}=[v_{n}]_{2^n}$.
\item $F_{n} =A_{n}+O(2^{n}+2)$.
\item $S_n \in M_0$.
\end{enumerate}
\end{proposition} 
\begin{proof}
  
i) From the recursive definition we see that $v_n$ is obtained by
solving the homological equation with the terms of degrees
${2^n+2}$ up to ${2^{n+1}+2}$ from $F_n$. Taking Poisson-bracket
with a term of degree $2^n+2$ shifts degrees by $2^n$, and similarly for the
non-exact part of $v_n$. So indeed $v_n$ has order $2^n$.\\ 

ii) This follows from an easy induction on $n$. By definition, the statement 
holds for $n=0$. Let us assume that
 $$F_n=A_{n}+O(2^{n}+2)$$ 
From the definition of $F_{n+1}$ we have 
\[F_{n+1}=e^{-v_n}F_n=F_n-v_n(F_n)+\frac{1}{2}v_n^2(F_n)-\ldots \]
and as $v_n$ has order $2^n$, it follows that
\[v_n^2(F_n)=O(2+2^n+2^n)=O(2^{n+1}+2).\]
So we have
\[ F_{n+1}=A_{n}+[F_n-v_n(F_n)]_{2^n+2}^{2^{n+1}+2}+O(2^{n+1}+2)=A_{n+1}+O(2^{n+1}+2).\]
iii) We use induction on $n$ and assume that $S_{n} \in M_0=R_0 + I^2 \cap M$. 
From (ii) we have
\[F_n =A_{n}+O(2^{n}+2).\]
The derivation $v_n$ is constructed
to solve the homological equation up to terms of high order:  
\[v_n(A_{n})=[F_n]_{2^n+2}^{2^{n+1}+2}+t+O(2^{n+1}+2),\;\;\; t \in M_0\]
As we have
\[v_n(F_n)=v_n(A_{n}+O(2^{n}+2))=v_n(A_{n})+O(2^{n+1}+2),\]
we see that the increment
\[[F_n-v_n(F_n)]_{2^n+2}^{2^{n+1}+2} \in  M_0,\]
hence also $S_{n+1} \in M_0$.
\end{proof}
%%%%%%%%%%%%%%%%%%%%%%%%%%%%%%%%%%%%%%%%%%%%%%%
Let us denote by $B_n$ the sum of the middle and upper term, so that
$$F_n=A_n+B_n. $$
As $v_n$ is of order $2^n$, we have
$$ \left[F_{n}-v_n(F_n)\right]_{2^{n}+2}^{2^{n+1}+2}=\left[B_{n}-v_n(A_n)\right]_{2^{n}+2}^{2^{n+1}+2}.$$
Now write
$\tau_n=\left[ - \right]_{2^{n}+2}^{2^{n+1}+2},\ \s_n=\left[ - \right]_{2^{n+1}+2} $,
The iteration is defined by:
\begin{align*}
B_{n+1}&= \s_n(e^{-v_{n}}F_{n}),\\
A_{n+1}&=A_{n}+\tau_n(B_n-v_n(A_n)),\\
v_{n+1}&=j_{A_{n+1}}(\tau_{n+1}(B_{n+1}))
\end{align*}
and is obtained by iterating the maps:\\
\begin{center}
 
\begin{tabular}{| c |}
\hline
\ \\

$\phi_n:(A,B,v) \mapsto (A,0,0)+f_n(A,B,v) $\\
\ \\
$f_n(A,B,v)=\left(\tau_n(B-v(A)), \s_n(e^{-v}(A+B)),j_{A+\tau_n(B-v(A))} \circ \tau_{n+1}(e^{-v}(A+B))\right) $\\
\ \\
\hline
\end{tabular}

\end{center}
 \ \\
The precise form of the iteration is irrelevant the main point is that the orders of the exponents which are involved grow rapidly.
Contrary to the  the classical Kolmogorov scheme or the Newton method,
we disregard the quadratic nature of the iteration and concentrate only on the fact that the order of the terms rapidly increases.\\

We shall see that it is important to be very precise about the degrees in the truncation, as 
otherwise we would loose control over the fields $v_n$ and the convergence 
properties of the iteration could be spoiled.
 
%%%%%%%%%%%%%%%%%%%%%%%

\section{Small denominators}
\label{SmallDenominators}
In the next three sections, we prove a convergence result for the HNF iteration.
For this, we will work in various Banach space completions of spaces of
holomorphic functions. The small denominators that arise in the iteration
stem from two different sources. The first one is classical: by taking
derivatives of a function (or more generally Hadamard products), we need to shrink the domain of definition, which results in some loss of control over the function. The simplest avatar of this phenomena is given by the {\em Cauchy inequalities}, when we compute a derivative. The second source comes from the comparison of different types Banach spaces. Such comparisons lead to norm estimates
very similar that of a differential operator. Changing of Banach spaces
is particularly important, as it enables us to choose our Banach space
accordingly to the type of operations we are considering.

%%%%%%%%%%%%%%%%%%%%%%%%%%%%%%%%%%%%%%%%
\subsection{Banach space completions of $\Ot$}
For an open set $U \subset \CM^d$, the set $\Ot(U)$ of holomorphic functions
on $U$ has the natural structure of a Fr\'echet space. 

For an arbitrary set $X \subset \CM^d$, we denote by $\Ot(X)$ the space of function on $X$
which are holomorphic in a neighbourhood of $X$. For instance, if $X$ is a point then $\Ot(X)$ is the space of germs of holomorphic functions at this point.

The set $\Ot(X)$ has the structure of an $LF$-space~\cite{Grothendieck_EVT}. However, we will not use this $LF$-topology in this paper, but rather work with certain Banach spaces
obtained from $\Ot(X)$ by putting additional boundary conditions and which are {\em Banach space completions} of the space
$\Ot(X)$.\\

Recall that a function $f:X \to \RM$ defined on a closed subset $X$ of Euclidean space is called {\em Whitney differentiable} at $x \in X$, if there exists functions $D^If(x)$ called the Whitney derivatives of $f$ at $x$ such that:
$$f(y)=\sum_{|I|\le m}\frac{D^If(x)}{|I|!}(y-x)^I+o(\|y-x\|^m) .$$
The sole difference that the Whitney definition bears, with respect to the standard definition, is the requirement of uniformity of the limit in the $x,y$ variables. The {\em Whitney extension theorem} says that any Whitney differentiable function is the restriction of a $C^\infty$-function~\cite{Whitney_extension}.
% 
% {\em Holomorphic functions} on a closed set are defined as kernel of the Cauchy-Riemann operator: if $X \subset \CM^d$
% is a closed subset then 
% $$f:X \to \CM$$
% is holomorphic if
% $$\left(\d_{x_k}-i\d_{y_k}\right)f=0,\ z_k=x_k+iy_k  $$
% Note that, in general, a holomorphic function is defined by a convergent power series only at the interior points of $X$.
% So we distinguish {\em analytic functions} on a subset $X \subset \CM^d$ which consists fo convergent power series
% at each point of $X$ and {\em Whitney holomorphic functions} which are $C^1$-functions satisfying the Cauchy-Riemann equations.
% A holomorphic function is called {\em real holomorphic} if
% $$f(\bar z)=\overline{f(z)} .$$

\begin{definition}
Given an set $X \subset \CM^d$, we denote by $\Ot^k(X)$ 
the Banach space of functions which are holomorphic
in the interior of $X$ and have a bounded Whitney $C^k$-extension to the closure of $X$:
$$\Ot^k(X)=C^k_b(\overline{X},\CM) \cap \Ot(\mathring{X})$$
For $k=0$, we use the notation $\Ot^c$ instead of $\Ot^k$.\\
When $X$ is a Lebesgue measurable set, we define similarly $\Ot^h(X)$ as
the Hilbert space of square integrable functions on $\overline{X}$ for the Lebesgue
measure which are holomorphic in the interior of $X$.
\end{definition}
We denote by $|f|$ the norm of $f \in \Ot^\bullet$ in any of these completions.
This will not lead to any ambiguity if we always explicitly state
to which Banach space $f$ belongs.

In the text we will frequently make use of open and closed polydiscs.

\begin{definition}
  For a $d$-tuple of positive numbers $\rho:=(\rho_1,\rho_2,\ldots,\rho_d)$ we set
  \[D_{\rho}:=\{z=(z_1,z_2,\ldots,z_d) \in \CM^d\;|\;\| z_i \| \leq \rho_i,i=1,2,\ldots, d\}\]
  
  For $\rho=(r,r,\ldots,r)$ we simply write $D_r:=D_{\rho}$ and call it the
  polydisc of radius $r$.
\end{definition}
%%%%%%%%%%%%%%%%%%%%%%%%%%%%%%%%%%%%%%%%%%%%%%%%%%%%%%%%
\subsection{The Cauchy-Nagumo lemma}

\begin{definition}
Given two sets $V \subset U \subset \CM^d$, 
we denote by $\dt(U,V)$ the supremum of the real numbers $r$ for which
$$V +  D_r \subset U , $$  
and call it the {\em Huygens distance} between $U$ and $V$.
\end{definition}
 
The derivative of a holomorphic function inside an open set $U \subset \CM$ remains holomorphic and the map
$$\Ot(U) \to \Ot(U),\ f \mapsto f' $$ 
is even continuous in the Fr\'echet topology. This is of little use because in Fr\'echet spaces
all kind of pathologies may occur. If we add boundary conditions and consider for instance the
Banach completion $\Ot^c(U)$, then the map becomes an unbounded operator but the general theory
of unbounded operators is not sufficiently precise for our purposes.

\begin{lemma}
  \label{L::Cauchy_Nagumo}
  Let $V \subset U \subset \CM^d$ be two sets with positive Huygens distance:
$\dt(U,V)=r>0.$
For a differential operator 
\[P=\sum_{|J| \leq k} a_J \d^J  \in L(\Ot^c(U),\Ot^c(V))\] 
of order $k$ we have:
\[ \|P\| \le C \frac{k!}{r^k}  \]  
where $C=\sup_{|J| \leq a_k,\ z \in U}| a_J(z) |$. 
\end{lemma}
\begin{proof}
If $z \in V$ and $f \in \Ot^c(U)$, then one has
 $$f(z)=\frac{1}{(2\pi i)^d}\int_{\g_z} \frac{f(\xi)}{\prod_{i=1}^d(\xi_i-z_i)}  d\xi_1 \wedge \dots \wedge d\xi_d, $$
where $\g_z$ denotes the cycle defined by $|\xi_i-z_i|=r$. We write it
symbolically as
$$f(z)=\frac{1}{(2\pi i)^d}\int_{\g_z} \frac{f(\xi)}{(\xi-z)}  d\xi . $$
Differentiation under the integral sign leads to
\[\d^I f(z)=\frac{I!}{(2 \pi i)^d}\int_{\g_z} \frac{f(\xi)}{(\xi-z)^{1+I}} d\xi.\]
We parametrise $\g_z$ by:
$$\theta \mapsto \xi(\theta):=z+r e^{\langle 2\pi i, \theta\rangle} $$
with $e^{\langle 2\pi i, \theta\rangle}=(e^{ 2\pi i \theta_1},\dots,e^{ 2\pi i \theta_d})$
and thus
$$d\xi=(2\pi i r)^d  e^{\langle 2\pi i, \theta\rangle} d\theta,$$
so that
$$\d^I f(z)=\frac{I !}{r^{|I|}}\int_{0}^1 \frac{f(\xi(\theta))}{e^{2\pi i \theta}} d\theta,$$
so finally
$$| \d^I f| \leq \frac{I!}{r^{|I|}}| f |. $$
From this the lemma follows.
\end{proof}
\begin{corollary}
 \label{C::local}
 Let $V \subset U \subset \CM^d$ with Huygens distance $\dt(V,U)=r>0.$ 
Then the restriction mapping
$$\rho^{ck}:\Ot^c(U) \to \Ot^k(V) $$
has norm smaller than $\frac{k!}{r^{|k|}}$
\end{corollary}

This corollary is a first manifestation of what we call the {\em local equivalence} between $\Ot^c$ and $\Ot^k$: there is an obvious map $\Ot^k(U) \to \Ot^c(U)$ of norm $\le 1$ and after shrinking $U$ to $V$, the norm estimates for $\Ot^c$ and  $\Ot^k$ differ only by constant factor, proportional to an inverse power of the Huygens distance between $U$ and $V$.  As we shall see, this will imply that if the Hamiltonian normal form iteration is $C^0$-convergent then it will be $C^\infty$-convergent.
 
%%%%%%%%%%%%%%%%%%%%%%%%%%%%%%%%%%%%%%%%%%%%%%%% 
\subsection{Local equivalence lemma for $\Ot^h$ and $\Ot^c$}
When we compare the Banach spaces $\Ot^h(U)$ and $\Ot^c(U)$, a phenomenon similar to that of the derivative
arises. If $U$ is a compact open set then any continuous function in $U$ is automatically square integrable, thus there is an obvious operator 
$$\rho^{ch}: \Ot^c(U) \to \Ot^h(U),\ f \mapsto f$$ 
But as a square-integrable function is in general not continuous, the inverse map is an unbounded linear operator. The way in which it is unbounded is similar to what we get for an order $d$ partial differential operator with $d=\dim U$. This gives another example of local
equivalence:
\begin{lemma}
 \label{L::local}
 Let $V \subset U \subset \CM^d$ be two sets with positive Huygens distance: $\dt(V,U)=r>0.$ 
Then the restriction mapping
$$\rho^{hc}:\Ot^h(U) \to \Ot^c(V) $$
has norm smaller than $\pi^{-d/2}r^{-d}$
 \end{lemma}
\begin{proof}
Let $f \in \Ot^h(U)$ and $w \in V$. The Taylor expansion of $f$ at a point $w$ 
\[f(z)=\sum_{J \in \NM^d} a_J (z-w)^J,\ a_J \in \CM . \]
The polydisc $D_w$ centred at $w$ with radius $r$ is contained in $U$. We have
$$\int_{D_w} | f|^2 =\sum_{J \in \NM^d} C(J) |a_J |^2 r^{2|J|+2n},\;\;\;C(I)=\prod_{k=1}^d \frac{\pi}{j_k+1} .$$
So we obtain
\[C(0) |a_0|^2 r^{2n} \le \int_{D_w} | f|^2  \leq  \int_{U} | f|^2=| f |^2 . \]
This shows that
\[ |f(w) |=|a_0| \leq   \frac{c}{r^d} | f |,\;\;c:=\sqrt{\frac{1}{C(0)}}=\pi^{-d/2} .\]
As the point $w \in V$ was general the result follows.
\end{proof}
\begin{corollary}
Assume $U,V$ are polydiscs centred at the origin in $\CM^d$.
 The  truncation maps ($j$ may be equal to $\infty$)
 $$\tau_{i,j}^c:\Ot^c(U) \to \Ot^c(V),\ f \mapsto [f]_i^j $$
 satisfy the norm estimates
 $$|\tau_{i,j}^c| \leq \Vol(U)\pi^{-d/2}r^{-d} $$
\end{corollary}
\begin{proof}
 These maps are obtained by composition:
 $$\Ot^c(U)   \stackrel{\rho^{ch}}{\to}\Ot^h(U) \stackrel{\tau_{i,j}^h}{\to} \Ot^h(U) \stackrel{\rho^{hc}}{\to} \Ot(V) $$
 and $\tau_{ij}$ is an orthogonal projection in a Hilbert space.
\end{proof}

% %
%%%%%%%%%%%%%%%%%%%%%%%%%%%%%%%%%%%%%%%%
\subsection{Hadamard products}
The truncation operator considered above is a special case of a Hadamard product. Consider a
polynomial
$$f(z)=\sum_{a \in \NM^d,\ a \leq N} \a_a z^a $$
and consider the  {\em Hadamard product}
$$g=\sum_{a \in \NM^d} \b_a z^a \mapsto g \star f:=\sum_{a \in \NM^d} \a_a \b_ a z^a  $$
This is the situation we encounter when solving the cohomological equation for the Hamiltonian normal form.
Then, if $U$ is a polydisk centred at the origin, by the Pythagorean theorem for $f \in \Ot^h(U)$ one has
$$|f \star g|^2=\sum_{a \in \NM^d} |\a_a \b_ a|^2 |z^a|^2 \leq \max |\a_a|^2\, |f|^2  $$
So the Hadamard product gives a bounded linear map
$$H^h_f: \Ot^h(U) \to \Ot^h(U),\ g \mapsto f \star g $$ we get an immediate bound for the Hamadard product in $\Ot^h(U)$:
$$| H_f^h| \leq \max |\a_a|. $$
Now, how to get an estimate for $\Ot^c$? This can be done only up to a shrinking. So
we let $U,V$ be a pair of sets for which $\dt(V,U)=r>0$. The Hadamard product
$$H_f^c:\Ot^c(U) \to \Ot^c(V) $$
can be seen as a composition
$$\Ot^c(U) \stackrel{\rho^{ch}}{\to} \Ot^h(U) \stackrel{H_f^h}{\to} \Ot^h(U) \stackrel{\rho^{hc}}{\to} \Ot(V) $$
By local equivalence, we get the estimate 
$$|H^c_f | \leq Cr^{-d}\max |\a_a|$$
where $C=\Vol(U)\pi^{-d/2}$. 

%%%%%%%%%%%%%%%%%%%%%%%%%%%%%%%%%%%%%%%%%%%%%%%%%%%%%%%
\subsection{Arnold-Moser lemma} \label{L::Arnold_Moser}
The KAM iteration involves  exponents of increasing degrees which we expect to become smaller
as we iterate the  process. The Arnold-Moser lemma puts this heuristic fact in formal form.

So let 
$$\rho(t,s):\Ot^h(D_t) \to \Ot^h(D_s),\ s <t $$
be the restriction mappings.

\begin{lemma}[\cite{Arnold_KAM,Moser_Pisa_1}] 
 Let $f \in \Ot^h(D_t)$ be such that its Taylor series expansions starts at order $N$:
$$f(z):=\sum_{|I| \geq N} a_Iz^I .$$
then:
$$| \rho(t,s)f | \leq \left(\frac{s}{t}\right)^{d+N} | f | .$$
\end{lemma}
\begin{proof}
The monomials $z^I$ form an orthogonal basis of $\Ot^h(P_s)$ with norms
$$| z^I |=C(I)^{1/2}s^{d+|I|},\ C(I):=\frac{\pi^d}{\prod_{k=1}^d(1+i_k)}. $$
By the Pythagorean theorem, for $f \in \Ot^h(U_t)$, we have:
\begin{align*}
| \rho(t,s)f |^2&= \sum_{|I| \geq N} |a_I|^2 C(I) s^{2d+2|I|} \\
             &= \sum_{|I| \geq N} |a_I|^2 C(I) \frac{s^{2d+2|I|}}{t^{2d+2|I|}}t^{2d+2|I|} \\
             & \leq   \frac{s^{2d+2N}}{t^{2d+2N}}|f|^2 .
\end{align*}
\end{proof}
By local equivalence, we get a  bound for $\Ot^c$ of the form:
$$| \rho(t,s)f | \leq \left(\frac{s}{t}\right)^{d+N} \frac{C}{(t-s)^d}| f | .$$
%%%%%%%%%%%%%%%%%%%%%%ù
%%%%%%%%%%%%%%%%%%%%%
\subsection{The frequency sets $Z_n$}
The above lemmas show that it is crucial to investigate the Huygens distance between the sets involved
in the iteration. We do this for the frequency variables.

\begin{definition}
For a falling sequence $a=(a_n)$ of real positive numbers, we define
$$\CM^d(a)_n:=\{ \a \in \RM^d: \forall J \in \ZM^d,\forall k \leq n,\ \| J \| \leq 2^k,\   (\a,J) \geq a_k \} $$
so that
$$\CM^d(a)=\bigcap_{n \in \NM} \CM(a)_n $$
For a subset  $X \subset \CM^d $, we use the notation 
$$X(a)=\CM^d(a) \cap X,\ X(a)_n=\CM^d(a)_n \cap X.$$
\end{definition}
\vskip0.1cm
\begin{figure}[htb!]
\includegraphics[width=0.3\linewidth]{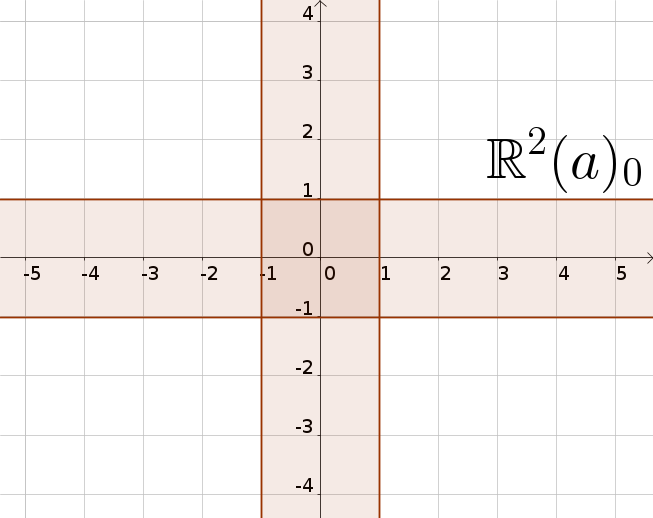} 
\includegraphics[width=0.3\linewidth]{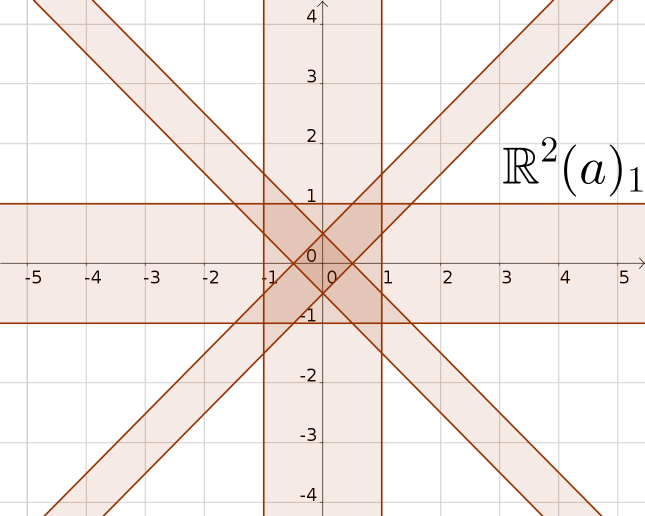} 
\includegraphics[width=0.3\linewidth]{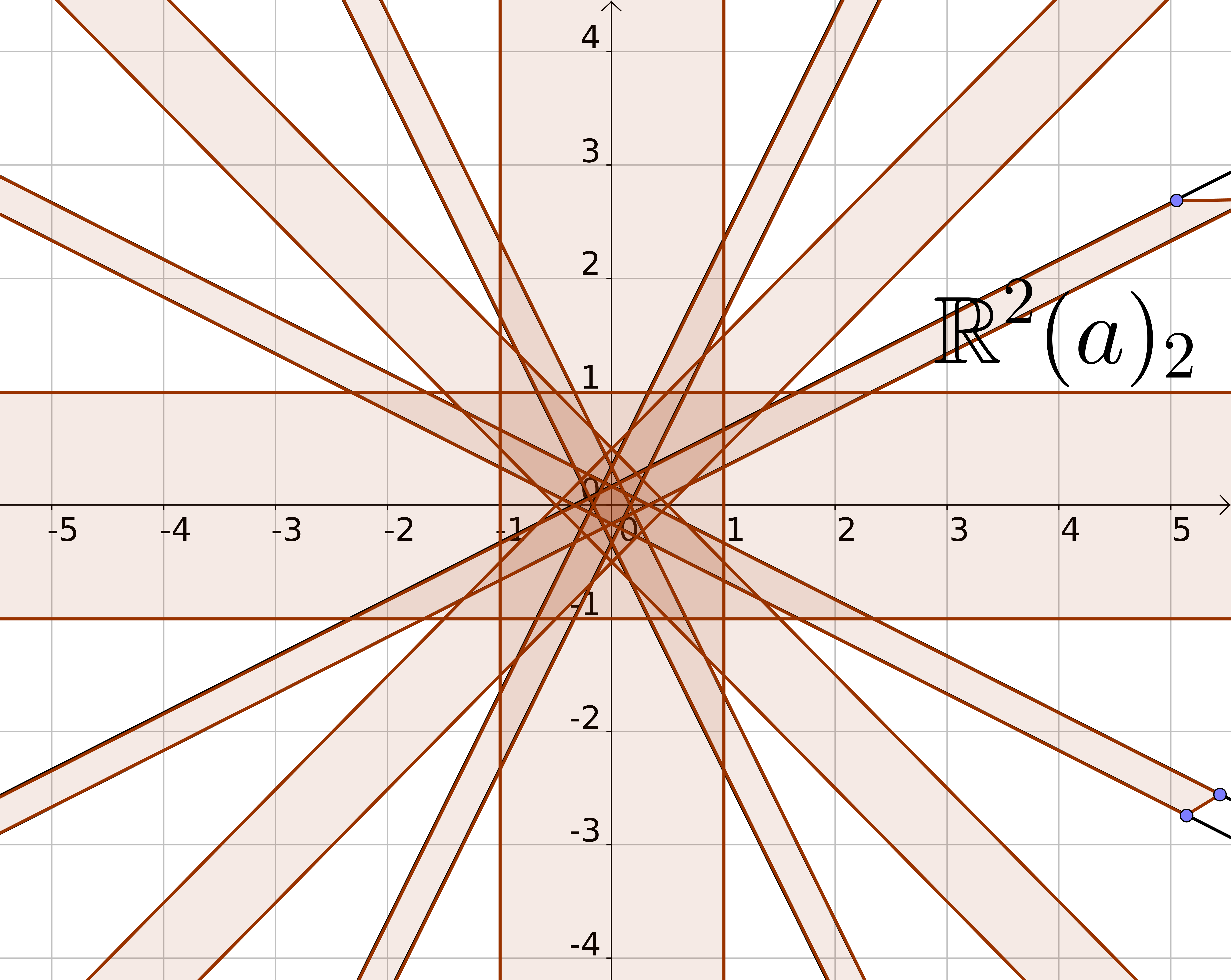} 
\end{figure}
\ \\
As our study is local in the frequency, we need to define a local variant of these sets in neighbourhoods of $\a \in \CM^d$:
\begin{definition} For a fixed decreasing sequence $a$, vector $\a \in \CM^d$ and initial radius $r \in \RM_{>0}$ we define the 
closed set
\[Z_{n,s}:=\{ \omega \in D_s:\forall J \in \ZM^d,\forall k \leq n,\ \| J \| \leq 2^k,\ (J,\a+\omega) \geq a_k(r-s) \}\]
for $s <r$, $n \in \NM$. 
\end{definition}
Many other choices can be made, we only need to have good estimates for the Huygens distance between these different sets.

\begin{lemma} \label{L::calibrated} The Huygens distance $\delta(Z_{n,t},Z_{n,s})$ satisfies the estimate
$$\delta(Z_{n,t},Z_{n,s}) \geq 2^{-n}a_n(t-s).$$
\end{lemma}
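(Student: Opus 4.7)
The goal is to show that if $\omega \in Z_{n+1}$ and $\eta \in \CM^d$ with $\|\eta\| \leq \rho := 2^{-n} a_n(s_n - s_{n+1})$, then $\omega + \eta$ lies in $Z_n$. Unpacking the definition, this means verifying two things: first, that $\omega+\eta$ stays inside the ball $B(s_n)$, and second, that the full arithmetic inequality $|(\alpha+\omega+\eta, J)| \geq a_k(s_0 - s_n)$ holds for all $k\leq n$ and all $J \in \ZM^d\setminus\{0\}$ with $\|J\|\leq 2^k$.

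The ball condition is immediate from the triangle inequality and the assumed normalization $2^{-n}a_n \leq 1$: we have $\|\omega+\eta\|\leq s_{n+1}+ \rho \leq s_{n+1}+(s_n-s_{n+1}) = s_n$. For the arithmetic condition, I would apply the triangle inequality together with the standard Cauchy--Schwarz-type bound $|(\eta,J)|\leq \|\eta\|\cdot \|J\|$. Since $\omega \in Z_{n+1}$, the hypothesis gives
\[|(\alpha+\omega, J)| \geq a_k(s_0 - s_{n+1})\quad \text{for every } k\leq n+1,\ \|J\|\leq 2^k.\]
Combined with $|(\eta, J)|\leq \|\eta\|\cdot\|J\|$, this yields
\[|(\alpha+\omega+\eta, J)| \geq a_k(s_0-s_{n+1}) - \|\eta\|\cdot\|J\|.\]

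The core arithmetic step is then to check that $\|\eta\|\cdot\|J\| \leq a_k(s_n-s_{n+1})$, since this is exactly the difference between $a_k(s_0-s_{n+1})$ and the desired lower bound $a_k(s_0-s_n)$. Using $\|J\|\leq 2^k\leq 2^n$ and the fact that $a_k$ is \emph{decreasing} (so $a_n \leq a_k$ for $k\leq n$), one gets
\[\|\eta\|\cdot\|J\| \leq 2^{-n}a_n(s_n-s_{n+1})\cdot 2^k = 2^{k-n} a_n(s_n-s_{n+1}) \leq a_n(s_n-s_{n+1})\leq a_k(s_n-s_{n+1}),\]
which closes the argument.

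There is no real obstacle — this is essentially a bookkeeping exercise around the triangle inequality. The one delicate point to keep an eye on is the choice of the exponent $2^{-n}$ in the definition of $\rho$: it must exactly compensate the worst case $\|J\|=2^n$ among all integer vectors allowed at level $k=n$, which is why $2^{-n}$ and not, say, $2^{-(n+1)}$ appears. Also, it is the monotonicity $a_n\leq a_k$ that allows us to replace $a_n$ by $a_k$ at the end and conclude uniformly in $k\leq n$.
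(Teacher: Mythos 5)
Your proof is correct and follows essentially the same route as the paper: the triangle inequality on $(\alpha+\omega+\eta,J)$, the bound $|(\eta,J)|\leq\|\eta\|\|J\|$ with $\|J\|\leq 2^k\leq 2^n$, and the monotonicity $a_n\leq a_k$ to replace $a_n$ by $a_k$ at the end. The one thing you do that the paper does not is explicitly check the ball condition $\omega+\eta\in B(s_n)$ using the normalization $a_0\leq 1$ (which the paper assumes in \S 3.7); the paper's proof silently skips this step, so your version is slightly more complete.
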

\begin{proof}
The proof is straightforward. Assume that
$\omega \in Z_{n,s}$ and take $x \in \CM^d$  satisfying 
$$\| x\|  \leq 2^{-n}a_n(t-s).$$ 
For $k \leq n$ and $\| J \|  \leq 2^k$, we have:
\begin{align*}
|(\a+\omega+x,J)|         & \geq |(\a+\omega,J)|-|(x,J)| \\
                          & \geq a_k(r-s)-\| x \|\, \| J \| \\
                          & \geq a_k(r-s)- a_n(t-s)\\
                          & \geq a_k(r-s)- a_k(t-s)=a_k(r-t) .\\                          
\end{align*}
This shows that $\omega+x \in Z_{n,t}$ and thus proves the lemma.
\end{proof} 
We now define the sets
\[W_{n,s}:=  D_{s}^d  \times Z_{n,s}   \times D_{s}^{2d} \subset  \CM^d \times \CM^d \times \CM^{2d}\]
with coordinates $\tau_i,\omega_i,q_i,p_i$.
From the above distance estimate we deduce the 
\begin{proposition}
\label{P::Hadamard}
 The Hadamard product 
 $$H_f^c(n,t,s):\Ot^c(W_{n,t}) \to \Ot^c(W_{n,s}) $$
 with the rational function
 $$f=\sum_{2^{n} \leq |I+J| < 2^{n+1}} \frac{1}{(\a+\omega,I-J)}q^Ip^J $$
 satisfies the norm estimate
 $$|H^c_f(n,t,s)| \leq \Vol(W_{n,t}) \left( \frac{s}{t}\right)^{d+2^n}a_{n+1}^{-d-1}2^{-d}\pi^{-d/2}(t-s)^{-d}$$
\end{proposition}
\begin{proof}
 By definition of $W_n$, we have
 $$\left| \frac{1}{(\a+\omega,I-J)} \right| \leq a_{n+1}^{-1} $$
 therefore, by local equivalence of $\Ot^c$ and $\Ot^h$, we have the norm estimate
 $$|H_f^c| \leq  \Vol(W_{n,t}) \left( \frac{s}{t}\right)^{d+2^n}a_{n+1}^{-d-1}2^{-(n+1)d}\pi^{-d/2}(t-s)^{-d} .$$
\end{proof}
As we go on, we see that more and more constants are involved in our
estimates but these are always of the same form.

In our iteration process, we will have to choose a strictly decreasing sequence $s_0 >s_1 > s_2,\ldots$ and put
\[ Z_n :=Z_{n,s_n},\;\;W_n:=W_{n,s_n}\]
As the sequence $(s_n)$ is decreasing we have
$$  a_k(s_0-s_{n+1}) \geq  a_k(s_0-s_n),$$
so the sets $(Z_n)$ form a descending chain and can be considered as a 
local variant of the chain $(\CM^d(a)_n)$.

Denote by $B(s_n)$ the closed ball of radius $s_n$ centred at the origin.
First note that $B(s_0) \subset Z_0$. Moreover as the sequence $(s_n)$ is decreasing we have
$$  a_k(s_0-s_{n+1}) \geq  a_k(s_0-s_n),$$
so the sets $(Z_n)$ form a descending chain and can be considered as a 
local variant of the chain $(\CM^d(a)_n)$. Moreover if $s_0<1$ then $a_k> a_k(s_0-s_n)$ for all $n$ and
therefore the condition
$$ (J,\a+\omega) \geq a_k$$
defining $\CM^d(a)$ is stronger than the condition
$$ (J,\a+\omega) \geq a_k(s_n-s_{n+1})$$
defining $Z_n$ which implies that
$$B(s_n) \cap \CM^d(a)_n \subset Z_n .$$
%%%%%%%%%%%%%%%%%%%%%%%%%%%%%%%%%%%%%%%%%%%%%%%%% 
\subsection{Bruno sequences and the absorption lemma.}
Our analysis of the convergence of the Hamiltonian normal form iteration
will depend on the choice of two falling sequences of positive real numbers.
The first sequence $a=(a_0,a_1,a_2,\ldots)$ controls the distance to the
resonance hyperplane in the frequency space and enters in the definition
of the set $\CM^d(a)$. A second sequence $s=(s_0,s_1,s_2,\ldots)$ controls the size of the shrinking sequence of
balls that enters in the definition of the frequency sets $Z_{n}$.

Many different choices are possible for these two sequences, but turns
out that particular convenient choices can be made based on the notion of
{\em Bruno sequence}. Recall that a sequence $a=(a_n)$ of positive real
numbers is called a {Bruno sequence}, if the transformed sequence with terms
\[t_n := a_0 a_1^{1/2} a_2^{1/2^2}\ldots a_n^{1/2^n}\] 
converges to a positive real number
\[\beta(a):=\prod_{k=0}^{\infty} a_k^{1/2^k}=\lim_{n \to \infty} t_n\]
that we will call the {\em Bruno constant} of the sequence $a$.
Equivalently, $a$ is a Bruno sequence if and only if
\[ \sum_{k=0}^\infty \frac{\|\log a_k\|}{2^k} < +\infty .\]
Obviously, the set of Bruno sequences is closed under termwise sum and product of sequences and the formation of the Bruno constant is multiplicative:
\[  \beta(ab)=\beta(a) \beta(b) .\]
For the geometrical sequence $G:=(Cq^n)$ one finds
\[\beta(G)=C^2q^2.\]
The exponential sequence $E:=(e^{\a^n})$ is Bruno for $\|\a\| <2$ and
\[ \beta(E)=e^{\frac{1}{1-\a/2}}\]
  
For the definition of the sets $Z_{n}=Z_{n,s_n}$ we will choose the sequence
$s=(s_n)$ in terms of an auxiliary sequence $\rho=(\rho_n)$
$$s_{n+1}=\rho_n^{1/2^n}s_n,\ s_0=r .$$
If $\rho$ is a Bruno sequence, then  the infinite product
\[ \prod_{k=0}^{\infty} \rho_k^{1/2^k}\] 
converges to a strictly positive number and therefore the sequence
$s=(s_n)$ converges to a positive value $s_\infty =\beta(\rho) s_0$ as well.
 
As we will both consider such sequences and their inverses, we denote respectively by $\Bt^+$ and $\Bt^{-}$ the set of increasing and decreasing Bruno sequences. Note that the set of Bruno sequences some obvious multiplicative properties:
\begin{enumerate}[{\rm i)}]
\item Taking the multiplicative inverse interchanges $\Bt^+$ and $\Bt^-$,
\item The product of two elements in $\Bt^\pm$ is again in
$\Bt^\pm$.  
\item An element of $\Bt^{\pm}$ raised to a positive power remains in $\Bt^\pm$.
\end{enumerate}

We will always take  $a=(a_n)$ and $\rho=(\rho_n)$ to be in  $\Bt^-$, but we
also need to take the sequence $\rho$ small enough with respect to $a$ in order
to counteract certain small denominators of the form
$$ a_n^k(s_n-s_{n+1})^{m}=a_n^k(1-\rho^{1/2^n})^{m}s_n^{m}$$
that appear in norm estimates. The following lemma shows that an appropriate
choice of $\rho$ 'absorbs' these small denominators.

\begin{lemma}\label{L::absorb}  Let $\rho \in \Bt^-$ with $\rho<1/2$ and put
$$s_{n}:=s_0\prod_{k < n} \rho_k^{1/2^k},\;\;\;s_\infty=\beta(\rho)s_0 .$$
Then we have
$$s_n-s_{n+1} \geq 2^{(-n-1)}s_\infty .$$

Moreover for any $k > 0$ and any Bruno sequence $b \in \Bt^-$ we may find a Bruno sequence $\rho \in \Bt^-$ such
\[ \frac{\rho_n^k}{(s_n-s_{n+1})^m}<b_n\]
\end{lemma}
\begin{proof}
The inequality $(1-\frac{x}{m})^m \ge (1-x), \;(m \ge 1)$ implies that
$(1-\frac{x}{2^n}) > (1-x)^{1/2^n},$
which for $x=1/2$ transposes to
\[ 1-2^{-1/2^n} \ge \frac{1}{2^{n+1}}.\]
The falling sequence $\rho$ is 
bounded by $1/2$ and hence:
\[1-\rho_n^{1/2^n} \ge 1-2^{-1/2^n} \ge \frac{1}{2^{n+1}} \]
and therefore
$$s_n-s_{n+1} > 2^{-(n+1)}s_\infty .$$
This proves the first part of the lemma.  For the second part, note that
the first part implies
$$ \frac{\rho_n^k}{(s_n-s_{n+1})^m} \leq  2^{m(n+1)}\rho_n^ks_\infty^{-m}  .$$
Let us now analyse how this estimate behaves when $\rho$ is multiplied
by a positive geometric sequence $u:=(Cq^n)$ with $q<1$ and $C>0$.
For the Bruno sequence $\s=u \rho$ the corresponding transformed falling
sequence:
$$t_n=s_0\prod_{k < n} \s_k^{1/2^k}=s_n\prod_{k < n} C^{1/2^k}q^{1/2^k}.$$ 
Hence we have:
$$t_\infty=\beta(u \rho) s_0 =\beta(u)\beta(\rho)s_0=C^2q^2 s_\infty, $$
and therefore
$$ \frac{\s_n^k}{(t_n-t_{n+1})^m} \leq  2^{m(n+1)}\s_n^kt_\infty^{-m}= 2^{m(n+1)}\rho_n^ks_\infty^{-m}C^{k-2m}q^{kn-2m}  .$$
For $q<2^{m/k}$, the sequence with terms
$$ 2^{m(n+1)}s_\infty^{-m}q^{kn-2m}$$
converges to zero and is therefore bounded by a constant.
Thus by an appropriate choice of $C$, we can arrange to have
\[ 2^{m(n+1)}s_\infty^{-m} q^{kn-2m} C^{k-2m} \rho_n^k \le \rho_n^k .\]
Hence, if $b \in \Bt^-$ is any given falling Bruno sequence, we first
form $\rho:=b^{1/k}$ and then 
$$ \frac{\s_n^k}{(t_n-t_{n+1})^m} \leq \rho_n^k=b_n$$
Thus the logic of the proof is as follows: given a Bruno sequence $b \in \Bt^-$,
we multiply the Bruno sequence $b^{1/k} \in \Bt^-$ by a sufficiently small geometric sequence as above. This defines a new Bruno sequence $\sigma$, which satisfies the second estimate of the lemma.
\end{proof}

%%%%%%%%%%%%%%%%%%%%%%%%%%%%%%%%%%%%%%%%%%%%%%%%%%%%%%%%%%%%%%ù
\section{Functional calculus in Kolmogorov spaces}
\label{KolmogorovSpaces}
We arrive at the second part of the proof of convergence. We give
a very quick review the theory of Banach functors developed in~\cite{KAM_theory_I,KAM_theory_II,KAM_theory_III,Functors} to which we refer for more details.

The idea behind Banach functors is to give an appropriate language to deal with the many indices involved in KAM theory. In the classical context, these indices should be accordingly tuned and not only the size of a ball and the index of the iteration.  Consider for instance the case of the map $L$, involved in the HNF iteration, which solves the homological equation. It sends functions to linear maps and therefore defines a family of linear maps:
$$L^c(n,u,t,s):\Ot^c(W_{n,u}) \to \Hom(\Ot^c(W_{n,t}),\Ot^c(W_{n,s})),\ m \mapsto Lm$$
So three parameters $u,t,s$ are involved in something that should be considered
as a single mapping. When we compose such mappings, more and more
parameters will appear. A common way to fix this problem is to try at each step to eliminate these new parameters by a specific {\em choice}. This requires
great ability as one needs to foresee which choices are convenient as the
precise choices will affect the estimates in the next step.

We will adopt a different strategy by regarding the parameters as our central
object of study and fix them only at the end of the argument.  In the spirit
of category theory, we give a particular emphasis on morphisms and the way they behave with respect to parameters, due to
the covariant or contravariant nature of functors involved, like the functor
$\Hom(-,-)$.

Our considerations might seem pedantic and unnecessary at first glance. An old mathematician mastering the debauch of indices in tensor calculus may have rejected the subtleties of the definition of manifolds and fibre bundles as well. (As a concrete example, looking back in the twenties at a basic
memoir on tensor calculus, we find that the definition of a manifold is given
in the introduction within a few lines and, of course, no definition at all is
given of a fibre bundle~\cite{Lagrange_tenseur}).
%%%%%%%%%%%%%%%%%%%%%%%%%%%%%%%%%%%%%%%%%%%%%%%%%%%%%%%%%%%%%%%%%%%%%%%%%%%%%
\subsection{Relative objects}
Recall that a category $B$ is called a {\em small category} if its objects and morphisms form a set.
The small categories we will consider are ordered sets $(B,<)$: it has the elements of $B$ as objects, and a single morphism from $a$ to $b$ for all pairs that satisfy $a  \leq b$. The categorical point of view might seem pedantic as we consider only ordered sets, but the reader will rapidly notice that it turns out to be the right geometric approach. It has also the advantages to limitate the range of possibilities for the  constructions assuming these should be functorial.

Let $B$ be a small category and $\Ct$ an arbitrary category. A {\em $B$-object} in $\Ct$
is a covariant functor
$$F:B \to \Ct $$
If $\Ct$ is the category of sets, we speak of a $B$-set or a set over $B$ and similarly for vector spaces, Banach spaces and so on.

When $B$ is an ordered set, the functor property means that for $a <b$, we have maps:
\[ F(a <b) =\rho_{a,b}: X_a \lra X_b,\ F(a) =X_a,\ F(b)=X_b .\]
to which we refer as the {\em connecting morphisms} of
the functor.

As an example, an increasing  chain of sets $X_1 \subset X_2 \subset X_3 \subset \ldots$ may be considered as a relative set over $(\NM, < )$ and a decreasing set as a relative set over the opposite category
$$(\NM,<)^{op}=(\NM,>) $$
%%%%%%%%%%%%%%%%%%%%%%%%%%%%%%
\subsection{Geometric picture}
Given a (covariant) functor
$$F:B \to \Ct $$
where $B$ is an ordered set (seen as a small category), we may form its {\em associated bundle}
$$\xi:X \to B $$
The {\em total space $X$} of the functor is by definition the disjoint union
$$X=\bigsqcup_{b \in B}X_b,\ X_b:=F(b)  $$
We regard an element of $X$ as a pair $(b,p)$ with $p \in X_b$ and
the projection is
$$X \to B,\ (b,p) \mapsto b $$

So we get a geometric picture close to that of a fibre bundle. Quite often, we will
use the notation
$$X \to B $$
to replace $F:B \to \Ct$.

The connecting morphisms of
the functor act like a
connection on a fibre bundle. Therefore we say that a map
$$\sigma: A \to X $$ 
is a {\em horizontal section over $A \subset B$},
if it is compatible with the connecting maps. We use the notation $\s \in \G^h(A,X)$.

Any element $x \in X_b$ defines a unique
horizontal and bounded section 
$$\s_x:]-\infty,b]\to X,\ a \mapsto \rho_{ab}(x)$$ 
Here 
$$]-\infty,b]:=\{ a \in B: a \leq b \}.$$
\begin{example}
The functor:
$$F:(\RM_{>0},<) \to  {\text{\bf Sets}}, t \mapsto  D_t $$
will be called the {\em relative polydisk}.
The associated bundle 
$$D \to \RM_{>0}$$ has fibres $D_t$ (polydisk of of radius $t$ in $\CM^d$). For $s < t$, the connecting
morphism is just the inclusion map $D_s \to D_t$.
\end{example}
%%%%%%%%%%%%%%%%%%%%%%%%%%%%%%%%%%%%%%%%
\subsection{Kolmogorov spaces}
Let us now consider relative Banach spaces:
$$F:B \to {\bf Ban} $$
The associated bundle
$$E \to B$$ is a family of Banach spaces $(E_b)$ parametrised by some base $B$. In particular, we assume a specific norm
$|-|_b$ on $E_b$ is given, elements of $E$ are pairs
$x=(b,v)$ with $v \in E_b$. We often use the notation $|x|$ instead of $|v|_b$.

The set $\Gamma^h(A,E)$ of horizontal sections has the structure of a vector space. It is a simple
but fundamental fact that the vector space of horizontal and {\em bounded}
sections $\Gamma^{\infty}(A,E)$ has the natural Banach space structure, with norms
\[\sup_{b \in A} \|\sigma(b)\|\]

\begin{definition}
 
A relative Banach space $E \to  B$ is called a {\em Kolmogorov space} if the connecting morphisms
$$ E_a \to E_b$$
have norms $\leq 1$.  
\end{definition}
\begin{example}
To describe some typical examples, consider again the {\em relative polydisk}
$D \to B:=(\RM_{>0},>)$.
We have functors
$$F^\bullet:B \to \text{\bf Ban},\ t \mapsto \Ot^\bullet(D_t) $$
Here  $\Ot^\bullet$ stand for any of the completions $\Ot^c,\Ot^k,\Ot^h$
considered in section 4. 

As under restriction of functions the norm can only decrease, the Banach
completions $\Ot^\bullet(D_t)$ are the fibres of a Kolmogorov space
$$\Ot^\bullet(D) \to \RM_{>0}. $$
  
\end{example}
\begin{example}
The (real) one dimensional Kolmogorov spaces give an important class of examples. 
Consider a relative Banach space
$$\RM_{>0} \times \RM \to \RM_{>0},\ (t,v) \mapsto v $$
over $(\RM_{>0},>)$. The connecting morphisms
$$\RM \to \RM,\ v \mapsto \l(t,s) v ,\ t>s$$
identify the fibres above $t$ and $s$.
These maps should satisfy the relation:
$$\l(u,t)\l(t,s)=\l(u,s). $$
for instance of the form:
$$\l(t,s)=e^{g(s)-g(t)} .$$
These define a Kolmogorov space when $\l(t,s) \leq 1$.
\end{example}
% \begin{example}
%The finite dimensional cases and in particular the one dimensional cases play a% key role: the trivial vector bundle
% $$E:=\RM_{>0} \times \RM \to \RM_{>0},\ (t,v) \mapsto t$$
% with norms $|(t,v)|=|v|$ (absolute value) defines a Kolmogorov space. 
% DDD One also has to give the maps $E_t \to E_s$.
% \end{example}

%%%%%%%%%%%%%%%%%%%%%%%%%%%%%%%%%%%%ù
\subsection{Hom spaces}
We now define the Hom spaces of our functor categories.
Let $E \to B,\ F \to C$ be Kolmogorov spaces. Then we define a new Kolmogorov space
$$Hom(E,F) \to B^{op} \times C$$ 
whose fibres above $(b,c)$ consists of bounded linear mappings from $E_b$ to $F_c$. Restrictions mappings are induced by that of $E$ and $F$, but as the functor $Hom(-,F)$ is contravariant, we must reverse
the order on the first factor (this is what {\em  op} stands for). 
\begin{example}
Take  $E=F=\Ot^c(D)$, $B=C=\RM_{>0}$ and consider the linear maps
$$L_{t,s}:\Ot^c(D_t) \to \Ot^c(D_s),\ f(z) \mapsto f(2z) $$
They define a horizontal section $L \in \G^h(U, \Hom(\Ot^c(D),\Ot^c(D)))$ over the set 
$$U=\{ (t,s) \in \RM_{>0}^2: 2s<t\} .$$

\vskip0.1cm
\begin{figure}[htb!]
\includegraphics[width=0.7\linewidth]{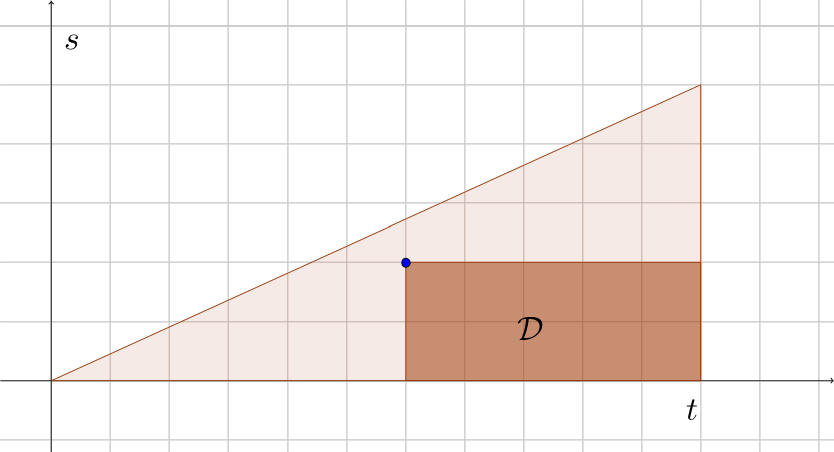} 
\end{figure}
\ \\
This means simply that if $f$ is holomorphic inside $D_t$ with continuous extension to the boundary the same is true for $L(f)$ is holomorphic inside $D_{s}$ for any $s \leq t/2$. Of course if  we consider any $u \geq t$, and any $s \leq t/2$ we have also a map for 
$$\Ot^c(D_u) \to \Ot^c(D_s),\ f(z) \mapsto f(z/2)$$
These are the connecting morphisms of the Kolmogorov space $\Hom(\Ot^c(D),\Ot^c(D)))$:
As in any Kolmogorov space, an element defines a horizontal section over a domain $\Dt$. In our case, this domain looks like a rectangle.

\end{example}

%%%%%%%%%%%%%%%%%%%%%%%%%%%%%%%%%%%%%%%%%%%%%%%%%%%%%%%%%%%%%%%%%%%%%%%%%%%
\subsection{Arnold spaces}
In analysis of the HNF iteration, we encounter a more involved situation:
we first defined sets
\[W_{n,s}:=  D_{s}^d \times Z_{n,s}  \times  D_{s}^{2d} \subset  \CM^d \times \CM^d \times \CM^{2d} .\]
For a fixed $n$ one obtains as above Kolmogorov spaces $\Ot^{\bullet}(W_n)$
over $\RM_{>0}$ with  $\Ot^{\bullet}(W_{n,s})$ as fibre over $s$.
So we obtain a sequence of Kolmogorov spaces:
$$\Ot^\bullet(W):\cdots \to \Ot^\bullet(W_{n-1}) \to \Ot^\bullet(W_n) \to \Ot^\bullet(W_{n+1}) \to \cdots $$
For fixed $n$, the sets $W_{n,s}$ define a set $W_n \to B_n:=\RM_{>0}$ and if we let $n$ vary, we have  a set
$$W \to \NM \times \RM_{>0} $$
where $\NM$ stands for the ordered set $(\NM,<)$  seen as a category. 
So we also have a Kolmogorov space
$$\Ot^{\bullet} (W) \to \NM \times \RM_{>0} $$
Note that in the iteration the role played by the indices $n$ and $s$ is
not symmetric: the normal form is horizontal with respect to the real variables and changes with $n$.
\begin{definition}
A functor 
$$F:(\NM,<) \to {\bf Kol} $$
is called an {\em Arnold space} if the  connecting morphisms have norm $\leq 1$.
\end{definition}
An Arnold space defines a sequence of Kolmogorov spaces
 $$\xymatrix@C=1cm@R=1cm{ E:\cdots \ar[r] & E_{n-1} \ar[r] \ar[d] & E_n \ar[r] \ar[d] &
E_{n+1} \ar[r] \ar[d] &\cdots \\
B:\cdots \ar[r] & B_{n-1} \ar[r] & B_n \ar[r] &
B_{n+1} \ar[r] &\cdots \\} $$
with $F(n)=(E_n \to B_n)$. In an Arnold space, the norms of the connecting maps
$$E_{n,t} \to E_{n+1,s} $$
are linear continuous maps with norm $\leq 1$. In practice the basis $B_n$ are all equal.

%%%%%%%%%%%%%%%%%%%%%%%%%%%%%%%%%%%%%%%%%%%%%%%%%%%%
%%%%%%%%%%%%%%%%%%%%%%%%%%%%%%%%%%%%ù
\subsection{Kolmogorofication}
Any Banach space 
$$E \to B$$
over an ordered base $B$ defines a Kolmogorov space $K(E)$ with fibres
$$K(E)_b=\{ x \in E_b: \sup_{a \leq b}|\s_x(a)| < +\infty \} $$
which we call the {\em Kolmogorofication} of $E$ (in the old notations $|\s_x(a)|=|x|_a$). Note that the Banach space $K(E)_b$ is just the space of horizontal bounded sections of $ E \to B$ over the set $]-\infty,b]$.

Given a  Kolmogorov space, we may rescale the norms by any positive function
$$\l:B \to \RM_{>0} $$
called a {\em weight function.} In general this family of rescaled Banach spaces
is no longer a Kolmogorov space, but one may take its Kolmogorification.
The resulting Kolmogorov space has fibres $E[\l]_t \subset E_t$ with norms
$$|f|_\l=\sup_{s \leq t} \l(s)|\s_f(s)| $$
where $\s_f$ is the horizontal section associated to $f$ .

\begin{example}
Consider a function
$$\l:\RM_{>0} \to \RM_{>0} $$
and the trivial vector bundle
 $$E:=\RM_{>0} \times \RM \to \RM_{>0},\ (t,v) \mapsto t$$
 with norms $|(t,v)|=\l(t)|v|$ (absolute value). If $\l$ is an increasing
 function, then the rescaled family is a Kolmogorov space. If $\l$ is not
 increasing, we consider the Kolmogorification $K(E)$.  If the function $\l$
 is  unbounded then $K(E)$ has trivial fibres (reduced to $\{ 0 \}$), otherwise $K(E)$
 is isomormophic as a vector bundle to $E$ but the Riemannian structure is now given by 
 $$|(t,v)|=\sup_{s \leq t} \l(s)|v| $$
 where $|v|$ denotes the absolute value of $v \in \RM$.
 \end{example}
%%%%%%%%%%%%%%%%%%%%%%%%%%%%%%%%%%%%%%%%%%%%%%%%%%%%%%%%%%%%%%%%%%%%%%%%%%%
\subsection{Maximal space}
The notion of {\em maximal ideal} in the local ring $\Ot_{\CM^d,0}$ can be generalised to the context of Kolmogorov space $E \to \RM_{>0} $. Consider the weight function
$$\l: \RM_{>0} \to \RM_{>0},\ s \mapsto s^{-1}$$
The Kolmogorov space $\Mt(E):=E[\l]$ will be called the {\em maximal space} of
$E$.
For the completions $\Ot^c$ and $\Ot^h$, the corresponding spaces, will be simply denoted by $\Mt^\bullet$
instead of $\Mt(\Ot^\bullet(-))$ (but not for $\Ot^k$ as there might be a confusion with the power of the maximal ideal).

\begin{example}
Let us now explain the relation of the maximal space with the maximal ideal of the ring $\Ot_{\CM^d,0}$. Consider again the {\em relative polydisk}
$D \to \RM_{>0} $
with 
$$D_s=\{ z \in \CM^d: |z_1|  \leq s,\dots,|z_d| \leq s \} $$
as fibre over $s$. There is a forgetful map which associates to a holomorphic
function its germ at the origin
$$\Mt^c(D) \to \Ot_{\CM^d,0}$$
We assert that the image of this map is the maximal ideal  $\Mt \subset \Ot_{\CM^d,0}$. Indeed, a holomorphic germ $f \in \Mt$ is by definition of the form
$$f=\sum_{i=1}^d \a_i z_i, $$
where the $\a_i$ are holomorphic in some  common neighbourhood $U$ containing a polydisk $D_t$.
The germ $f$ defines a horizontal section $\s_f$ over the interval $]0,t]$. The value $\s_f(s) \in \Mt^c(D_s)$ is simply the restriction of 
$$f:U \to \CM $$
to $D_s$ where $s \leq t$.
We get the inequality
$$\sup_{z \in D_s}|f(z)| \leq s\sum_{i=1}^d \sup_{z \in D_s}|\a_i|, $$
and therefore the renormalised norm of the section $\s_f$ satisfies
$$|\s_f| \leq \sum_{i=1}^d |\a_i| .$$
This shows that the representative $\s_f(t)$ of the germ $f$ belongs to the Banach space $ \Mt^c(D)_t$.
Conversely assume  that $f \in \Mt^c(D)_t$ and denote by $\s_f$ its associated horizontal section. 
We have
$$|\s_f(s)|\leq s|f|   $$
and taking the limit when $s \to 0$, we deduce that $\s_f(0)=f(0)=0$.
\end{example}

\subsection{Local operators}
%%%%%%% 

\begin{example}
Consider the derivative 
$$D:f \mapsto f' $$
as the prototype  of a partial differential operator. It exhibits an interesting property:
it maps $\Ot^c(D_t)$ to $\Ot^c(D_s)$ for any $s<t$. Indeed if $f$ is holomorphic inside $D_t$ then $f'$ is holomorphic and therefore continuous inside the closure of $D_s$.

This means that
unlike general homomorphisms, the derivative defines a section above $B \times B^{op}$ with $B=(\RM_{>0},
>)$.

Geometrically, we have an horizontal section defined over an upper triangle
\vskip0.1cm
\begin{figure}[htb!]
\includegraphics[width=0.5\linewidth]{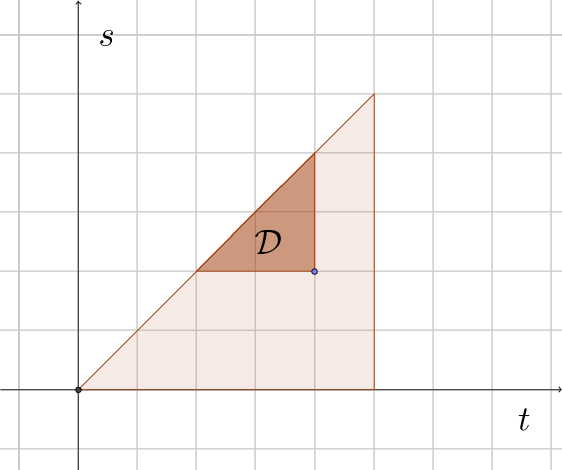} 
\end{figure}
\end{example}
So differential operators are to be considered as sections of the opposite Kolmogorov space $Hom(E,F)^{op}$.
We now consider an arbitrary function
$$\l:B \times C \to \RM_{>0}$$
\begin{definition} Let $E \to B$ and $F \to C$ be Kolmogorov spaces.
The Kolmogorofication of the rescaled space $Hom(E,F)^{op}[\l]$ is called the {\em space of $\l$-local operator} and we denote it by $L^\l(E,F)$.
\end{definition}
A $\l$-local map $u \in L^\l(E,F)_{a,b},\ a > b$ is a horizontal family of maps
$$u_{a',b'} \in L(E_{a'},F_{b'}),\ a' \leq a,\ b' \geq b, $$
with finite norm
$$|u|=\sup_{a' \leq a,b' \geq b}\{ \l(a',b') |u_{a',b'}| \}. $$
\begin{example}
  Consider again the relative polydisk in $\CM^d$
  $$D \to \RM_{>0}. $$
 and the associated Kolmogorov space 
 $$\Ot^c(D) \to \RM_{>0}.$$
 The Cauchy-Nagumo inequalities \ref{L::Cauchy_Nagumo} imply that any order $k$ partial differential
 operator is local for the weights $\l(t,s)=(t-s)^a,\ a \geq k$. 
\end{example}

\begin{example}
 Let us keep the same notation, fix $d=1$ and consider the case of multiplication by $z$
 $$\mu(t,s):\Ot^c(D)_t \to \Ot^c(D)_s,\ f(z) \mapsto zf(z) $$
 As a Banach space mapping its norm is equal to $s$:
 $\| \mu(t,s) \|=s$.
 However if we look at it as a local operator for the weight $\l(t,s)=1$ then 
 $|\mu(t,s)|=t $. Indeed, for $t' \leq t, s'\geq s$, we have
 $$|\mu(t',s')(1)|=|z|=s' $$
 where $1$ stands for the constant function equal to one (to be completly formal we should write $1_{t'}$ as it is defined on the disk of radius $t'$).
 Therefore
 $$|\mu(t,s)|=\sup_{t' \leq t, s'\geq s} |\mu(t',s')| \geq t $$
 as $s'$ can be arbitrary close to $t$. Now obviously
 $$ |\mu(t',s')(f)| \leq s'|f| \leq t|f| $$
 and therefore gives a different norm namely $|\mu(t,s)|=t$. Even in such an example, the naïve operator norm and the one we find are in fact different!

 The difference between the naïve operator norm $\| \cdot \|$ and the Kolmogorov space norm $|\cdot |$ is somehow  similar to the difference between pointwise and uniform convergence: in the later case we consider a sup-norm over a whole triangle instead of the norm at a point.
\end{example}
\begin{example}
 Let us again consider the one dimensional case: 
 $$E:=\RM_{>0} \times \RM \to \RM_{>0},\ (t,v) \mapsto t$$
 with constant connecting maps ($(t,v)$ gets identified with $(s,v)$).
 
 A local operator $u \in L^\l(E,E)_{t,s} $ for a weight $\l(t,s)$  consists of family of linear maps
 $$u(a,b):\RM \to \RM,\ x \mapsto \a(a,b) x ,\ a \leq t,\ b \geq s$$
 such that $\a(a,b)\l(a,b)$ remains finite. Consider for instance the case $\l(t,s)=t-s$ then the operator
 $$u_{t,s}:E_t \to E_s,\ x \mapsto \frac{1}{t-s}x $$
 define a local operator with norm $1$. This finite dimensional model mimics the derivative acting on holomorphic functions.
\end{example}

In the HNF iteration we are concerned with powers of the maximal ideal.
Therefore it is important to generalise the small denominators lemma for
the maximal space. This can be done in one stroke for all the different cases: 

\begin{proposition}
\label{P::maximal}
Let $E,F$ be Kolmogorov spaces over $\RM_{>0}$.
 Any element $u \in L^\l(E,F)$ such that $\Im u \subset \Mt(F)$ induces an element of $ L^\mu(\Mt(E),\Mt(F)) $
 with $\mu(t,s)=(s/t)\l(t,s)$. Moreover the resulting map
 $$L^\l(E,M(F)) \to L^\mu(M(E),M(F)) $$
 has norm $\leq 1$ (over each point).
\end{proposition}
\begin{proof}
 Take $u \in L^\l(E,F)_{t,s}$ satisfying the assumption. We denote by $| \cdot|$ the norms in $E,F$ and by $\| \cdot \|$ the norms in $\Mt(E)$ and $\Mt(F)$. We
  natural inclusions
 $$\rho:\Mt(E) \to E,\ \s:\Mt(F) \to F $$
 and
 \begin{align*}  
 s^{-1}|u(\rho(x))| &\leq s^{-1}\l(s,t)^{-1}|u|\, |\rho(x)| \\
    & \leq ts^{-1}\l(s,t)^{-1}|u|\, \|x\|=\mu(s,t)^{-1} |u|\, \|x\|
 \end{align*}
Therefore
 $$\| u(x)\| \leq \mu(s,t)^{-1} |u|\, \|x\| $$
\end{proof}

\subsection{Composition lemma}
Let $E,F$ be a Kolmogorov space over $\RM_{>0}$.
In practice we often deal with local operators $u \in L^\l(E,F)$ having a weight of the form
$$\l(t,s)=Ct^{-a}(t-s)^b \left(\frac{t}{s} \right)^{n} $$
with $C>0,\ a>0,\ b \geq 0,\ n \in \NM$. In such case, we write provisionally $u \in[C,a,b,n]$ 
\begin{lemma}
\label{L::composition}
 If $u \in L^\l(E,F)$, $v \in L^\mu(F,G)$ are local maps with
 $u \in [C,a,b,n]$ and $v \in [C',a',b',n]$
 then $v \circ u \in [2^{-b-b'}CC',a+a',b+b'] $ and
 $$|v \circ u | \leq|v|\,|u| $$
\end{lemma}
\begin{proof}
Let $m$ be the mid-point of the interval $[s,t]$ then for  $x \in E_t$, we have
\begin{align*}
|v \circ u(x) | &\leq  Cm^{a}(m-s)^{-b} \left(\frac{s}{m} \right)^{n}|u|\, |v(x)|\\
                & \leq  CC'm^{a}(m-s)^{-b}t^{a'}(t-m)^{-b'} \left(\frac{s}{t}\right)^{n}|u|\, |v|\,|x|\\
                & \leq 2^{b+b'}CC't^{a+a'}(t-s)^{-b-b'}\left(\frac{s}{t} \right)^n|u|\, |v|\,|x|\\
\end{align*}
where we identified $x$ with its associated horizontal section.
\end{proof}

\subsection{The map $L_n$}
The main ingredient to solve the homological equation was the  map $L$ (see \ref{SS::Homequation})
defined on monomials by setting for $a \neq b$:
\[ Lp^aq^b :=\left\{ \begin{matrix}\displaystyle{ \{-,\frac{1}{(\alpha+\omega,a-b)}p^aq^b\}},\ a \neq b\\
\ \\
\displaystyle{\sum_{i=1}^d \frac{\partial \tau^a}{\partial \tau_i}\d_{\omega_i}},\ a=b
\end{matrix}
    \right.\]
 
We  now lift this map to the level of Kolmogorov spaces and check that it is local.
First we consider the Kolmogorov space 
$$\Ot^c(W_n) \to ]0,1]$$
with fibres $\Ot^c(W_{n,s})$ and its maximal spaces $\Mt^c(W_n)$. Then we
define the {\em Kolmogorov space of Poisson derivations:} 
$$\Theta^c(W_n) := \Theta(R) \cap L^\l(\Ot^c(W_n),\Ot^c(W_n)), \l(t,s)=e^{-1}(t-s),$$
the constant $e=2.718\dots$ is purely conventional and will help to avoid further constants in the computations. Here $\Theta(R)$
stands for the algebra of Poisson derivations of the Poisson algebra
$$R=SD_{\alpha}[[\tau,p,q]] \subset \CM[[\omega,\tau, p,q]].$$

Now consider the projection:
$$\tau_n^\bullet: \Ot^\bullet(W_n) \to \Ot^\bullet(W_n),\ f \mapsto [f]_{2^{n+1}+2}^{2^{n+1}+2} $$
The map $\tau_n^h$ is an ordinary orthogonal projection and has therefore norm $1$.
We can be more precise: by the Arnold-Moser lemma the operator $\tau_n^h$ is local for the weight
$(s/t)^{k}$, provided $k \leq 2^{n+1}+2+3d $, and for simplicity we take $k=2^n+1$.

Lemma~\ref{L::calibrated} implies that
$$\dt(W_{n,t},W_{n,s}) \geq 2^{-n}a_n(t-s) $$
and therefore local equivalence implies that the map $\tau_n^c$ is  local for the weight 
$$\l(s,t)=\left(\frac{s}{t} \right)^{2^{n}+1}(t-s)^{k}$$
for $k$ big enough (and more precisely $k \geq 4d$ see Lemma~\ref{L::local}) with norm bounded
by $a_n^{-k}2^{kn}$. This is a very rough estimate for the norm of course but sufficient: assuming $(a_n)$ to be a falling Bruno sequence then the sequence $(a_n^{-k}2^{kn})$ is a rising Bruno sequence.

Composing $L$ with $\tau_n^\bullet$ gives a map $L_n^\bullet$.
$$L_n^\bullet:\Ot^\bullet(W_n) \to \Theta^\bullet(W_n) $$
It is local provided $k$ is large enough. To see it, we decompose the map $L_n^\bullet$ and at each step we find operators which are local for a weight of the form $(t-s)^k$ ($k$ big enough) and norms bounded by a Bruno sequence. For instance, the next step  involves Hadamard products with the series
$$f_n=\sum_{|a|+|b| \in [2^{n+1}+2,2^{n+2}+2[} \frac{1}{(\alpha+\omega,a-b)}p^aq^b$$
So we get maps
$$H_n^\bullet: \Ot^\bullet(W_n) \to \Ot^\bullet(W_n) $$
This Hadamard product is local and its norm is again bounded by a Bruno sequence~(Proposition \ref{P::Hadamard}). In the same way, the map sending a function to its Hamiltonian field is, by the Cauchy-Nagumo lemma, again local etc.

\subsection{Locality of  $j_n^\bullet$}
The  solution $j_n$ to the homological equation is now defined by
$$j_{T,n}(x)=L_n x-L_n((L_nx)T) $$
The maps now depend on $T$ but an estimate is easily found:

\begin{proposition}
\label{P::estimate}
The map 
$$j_{T,n}^c:\Ot^c(W_n) \to \Theta^c(W_n)$$
is local for the weight 
$$\l_n(s,t)=3^{-2k-1}(t-s)^{2k+1} \left(\frac{t}{s} \right)^{2^n+1}  $$
and moreover
$$ |j_{T,n}|/(1+|T|) \leq |L_n|+|L_n|^2.$$
\end{proposition}
\begin{proof}
As in the proof of the composition lemma (Lemma~\ref{L::composition}), we take equidistant points $s_0=s<s_1<s_2<s_3=t \leq 1$ and identify
$x,T \in \Ot^c(W_n)_{t}$ with associated horizontal sections.
As
 $$j_{T,n}(x)=L_n x-L_n((L_nx)T) $$
 we have:
 \begin{align*}
  |L_n((L_nx)T)| &\leq   \left(\frac{s_0}{s_1}\right)^{2^{n}+1}\frac{|L_n|}{(s_1-s_0)^k}\, |((L_nx)T)|\\
     & \leq \left(\frac{s_0}{s_2}\right)^{2^{n}+1} \frac{|L_n|}{(s_1-s_0)^{k}(s_2-s_1)}\, |L_nx|\, |T|\\
     & \leq \left(\frac{s_0}{s_3}\right)^{2^n+1}\frac{|L_n|^2}{(s_1-s_0)^{k}(s_2-s_1)(s_3-s_2)^k}\, |x|\, |T|\\
     &\leq 3^{2k+1}\left(\frac{s}{t}\right)^{2^n+1} \frac{|L_n|^2}{(t-s)^{2k+1}}\, |x|\, |T|
 \end{align*}
 From this estimate, we deduce that :
\begin{align*}
 |j_T(x)| & \leq \left(\frac{s}{t}\right)^{2^n+1} \frac{|L_n|+3^{2k+1}|L_n|^2\, |T|}{(t-s)^{2k+1}}\, |x| \\
  & \leq 3^{2k+1}\left(\frac{s}{t}\right)^{2^n+1} \frac{|L_n|+|L_n|^2}{(t-s)^{2k+1}}(1+ |T|)|x| \\
\end{align*}
This proves the proposition.
\end{proof}
Let us make a few comments on this result.
\begin{enumerate}[{\rm 1)}]
 \item If $(a_n) \in \Bt^-$ then $|L_n| \in \Bt^+$ and therefore  $(|j_n,T|/(1+|T|)) \in \Bt^+$ as well.
 \item  By Proposition~\ref{P::maximal}, the map $j_n^c$ induces a map  on the maximal spaces which is local for the weight:
$$\mu_n(s,t)=\left(\frac{s}{t}\right)^{2^n}(t-s)^{k}$$
\end{enumerate}

%From the proposition, we deduce that: {\em There exists a Bruno sequence $b \in \Bt^+$ such that map 
%$$j_{T,n}^c:\Mt^c(W_n) \to \Theta^c(W_n) $$
%is local for the weight
%$$\l_n(t,s)=b_nt^{-1}\left(\frac{t}{s}\right)^{2^n}(t-s)^{2k+1} $$
%and moreover 
%$$\sup_{T,n} |j_{T,n}|/(1+|T|)<+\infty$$}
%The quadratic part of the estimate will be irrelevant as for $|T|<1$ we simply have
%$$ |j_{T,n}| \leq 2(|L_n|+|L_n|^2).$$

%%%%%%%%%%%%%%%%%%%%%%%%%%%%%%%%ù
%%%%%%%%%%%%%%%%%%%%%%%%%%%%%%%%%%%%%%%%%%%%%%%%%%%%%%%%%%%%%%%%%%%%%%%%%%%%%%%%%%
\subsection{Functional calculus in Banach algebras}

For an ordinary Banach algebra $A$, we may take the image $f(u)$ of $u \in A$ by an analytic series
$$f(z)=\sum_{n \geq 0} a_n z^n \in \CM\{z\}$$ provided that the norm of $u$
is smaller that the convergence radius of $f$. Moreover defining the {\em absolute value} of $f$ by
$$|f|(z):=\sum_{n \geq 0} |a_n| z^n $$
we get the estimate
$$\| f(u)\| \leq |f|(\| u \|) $$
So we have a non linear map
$$f:A(r) \to Hom(A,A),\ u \mapsto f(u)$$
where $A(r)$ is the ball of radius $r$ and $\Hom(A,A)$ is the space of continuous linear maps. We want to emulate this construction in the context of Kolmogorov
spaces.
%%%%%%%%%%%%%%%%%
\subsection{Non linear bounded maps} 
We now define non-linear maps in Kolmogorov spaces.
Consider a Kolmogorov space
$$E \to B $$
Inside the fibre $E_b$ we denote by  $E(r)_b$ the ball of radius $r$ centred at the origin. According to our philosophy we get a relative set
$$E \to \RM_{>0} \times B  $$
whose fibre  above $r,b$ is the ball $E_{b}(r)$. We call this relative set
the {\em ball of the Kolmogorov space $E$}.
We define the Kolmogorov space of {\em bounded mappings}
$$\Bt(E,F) \to \RM_{>0} \times B^{op} \times C    $$
whose fibre above $(r,b,c)$ consists of bounded maps from $E(r)_b$ to $F_c$ with the supremum norm.
%%%%%%%%%%%%%%%%%%%%%%%%%%%%%%%%%%%%%%%%%%%%%%%%%%%%%%%%%%%%%%%%%%%%%%%%%%%%%%
\subsection{The Borel map}
We  consider a Kolmogorov space 
$$E \to \RM_{>0}$$
In view of the Cauchy-Nagumo lemma (Lemma \ref{L::Cauchy_Nagumo}), local maps
$u \in L^\l(E,E)$ with weight $(t-s) $ can be seen as  generalisation of
order one partial differential operators, that is, a vector fields. In the
same way that a vector field admits a flow, such operators can be exponentiated and, more generally, we can perform functional calculus on them. As the flow
is defined only for small times, we cannot expect to generalise the existence
of functional calculus to Kolmogorov spaces in a straightforward manner:
singularities should occur even for entire function such as the exponential. 

We study a local situation and in particular our vector fields, which induce change of variables, vanish at the origin. Therefore from now on
we consider local maps for the weight (for technical reasons we multiply
the weight by the constant $e=2.718 \dots$)
$$\l(t,s)=(t-s) e  $$

For a series $f$ we denote the {\em Borel transform} by 
$$\Bt f:=\sum_{n \geq 0} \frac{a_n}{n!}z^n, $$
 \begin{proposition}
 \label{P::Borel} 
Let $f=\sum_{n \geq 0} a_n z^n \in \CM\{z\}$ be an analytic series with $R_f$ as radius 
of convergence. There is a well-defined horizontal section 
$$\b_f \in \G^h(U,\Bt(L^\l(E,E),Hom(E,E))) \text{ over } U=\{ (r,s,t) \in \RM^3_{>0}: r <e^{-1}\l(t,s)R_f \} $$
with
$$\b_f(r,s,t):L^\l(E,E)(r)_t \mapsto Hom(E_t,E_s),\ u \mapsto  \Bt f(u(t,s)).$$ Moreover we have the estimate 
$$|\Bt f(u(t,s))| \leq |f|\left(\frac{| u(t,s) |}{t-s}\right) .$$
\end{proposition}
\begin{proof}
We cut the interval $[s,t]$ into $n$ equal parts:
$$s_k=s+k\frac{t-s}{n},\ k=0,\dots,n $$
We denote by $\| \cdot \|$ the usual Banach space norm. We have:
\begin{align*}
 \| u^n(t,s) \|  =\| \prod_{i=1}^n u(s_{i+1},s_i)|\ &\leq \prod_{i=1}^n\| u(s_{i+1},s_i)\|\\
   & \leq |u(t,s)| e^{-n}\prod_{i=1}^n (s_{i+1}-s_i)^{-1}\\
    &\leq |u(t,s)| e^{-n} n^n (t-s)^{-n}\\
    &\leq | u(t,s) |^nn! (t-s)^{-n}, \\
\end{align*}
(we used the standard inequality $n^n \leq e^n n!$) and therefore:
$$\|\sum_{n \geq 0}  \frac{a_n}{n!} u^n(t,s) \| \leq \sum_{n \geq 0} |a_n| \left(\frac{| u(t,s) |}{t-s}\right)^n $$
assuming
$$\frac{| u(t,s) |}{t-s} \leq R_f \iff | u(t,s) | \leq e^{-1} \l(t,s) R_f $$
This proves the proposition.
\end{proof} 
 
\begin{example}
 Consider the Kolmogorov space
 $$\Ot^c(D) \to \RM_{>0} $$
 where $D \to \RM_{>0}$ has fibres
 $$D_t=\{ z \in \CM: |z| <t \} .$$
Let us compute the exponential of the derivative:
 $$u(t,s):\Ot^c(D_t) \to \Ot^c(D_s),\ g(z) \mapsto g'(z) $$
According to Taylor's formula
$$e^ug(z)=g(z+1) $$
As an unbounded operator, the definition domain of the exponential consists of map having a convergence radius $>s+1$.

The theorem says that the taking the exponential of the derivation, which corresponds to composition the flow $\p$: 
 $$(e^u)_{t,s}:\Ot^c(D_t) \to \Ot^c(D_s)$$ gives a well defined homomorphism provided we have the estimate:
 $$|u(t,s)|<t-s .$$
 But by Cauchy-Nagumo inequalities $|u(t,s)|=1$ and we recover the condition $t>s+1$.
 
 So functorial calculus informs us on the way the disk $D_s$ is translated under the flow of the vector field: if the estimates $t>s+1$ holds then the image of the disk $D_s$ under the flow of $\d_z$  at time $1$ is contained inside the disk $D_t$.
 
   In the HNF iteration, the situation is of course much more complicated: domains might be shrinked and moved in a complicated way. Our formalism takes care by itself of the necessary information to have a well-defined exponential. 
\end{example}
 
%%%%%%%%%%%%%%%%%%%%%%%%%%%%%%
\subsection{Product lemma}

A non-local map carries a definition domain, that is the set of parameters over which it is defined. When we compose such maps, convolutions of sets arise. This leads to the following definition:
\begin{definition}\label{D::convolution}\index{convolution of sets}  
The {\em convolution} of $A_1 \subset B_1 \times B_2$ and 
$A_2 \subset B_2 \times B_3$ is the set
\[A_1 \star A_2:=\{(r,t) \in B_1 \times B_3\;|\;\;\exists s \in B_2\;\;\textup{such that} (t,r) \in A_1, (r,s) \in A_2\}\] 
\end{definition}
\begin{theorem} 
\label{T::compexponential1}
Let $E \to \RM_{>0}$  be an Kolmogorov space, and 
\[(u_i) \subset  L^1(E,E)_\l\] 
a sequence of local morphisms with weight $\l(t,s)=t-s$ and norm bounded by~$1$. Assume that 
$\s:=\sum| u_i | <+\infty $,
then the sequence
$$g_n:=e^{u_n}e^{u_{n-1}}\cdots e^{u_0}  $$ converges to a horizontal section $g \in \G^h(A,\Hom(E,E))$ where
\[A:=\{(t,s) \in ]0,\tau]^2\;|\;\sum_{i \geq 0}|u_i|<t-s \}.\]
Furthermore, one has the estimate for the norm function 
$$| g | < \frac{1}{1-\s\nu}  ,$$
with $\nu:= t/(t-s).$
 \end{theorem}
 \begin{proof}
The map $e^{u_i}$ is defined over the simplex
$$A_i:= \{(t,s) \in ]0,\tau]\times ]0,\tau]\;|\;|u_i|<t-s \}$$
and on $A_i$ one has
$$|e^{u_i}| \leq \frac{1}{1-\frac{|u_i|}{t-s}} .$$ 

Let us now consider the composition of two exponentials $e^{u_i} e^{u_{i+1}}$.
As 
$$\frac{1}{1-x} \times \frac{1}{1-y} < \frac{1}{1-(x+y)} $$
for $x,y \in ]0,1[$, the set $A_{i+1} \star A_i$ over which the composition $e^{u_{i+1}} e^{u_i}$ contains the set
$$ \{(t,s) \in ]0,\tau]\times ]0,\tau]\;|\;|u_i|+|u_{i+1}|<t-s \}$$
and one has the estimate
$$|e^{u_{i+1}} e^{u_i}| \leq \frac{1}{1-\frac{|u_i|+|u_{i+1}|}{t-s}}=\frac{t-s}{1-|u_i|-|u_{i+1}|} $$
By a straightforward induction, we get the estimate
$$|e^{u_n}e^{u_{n-1}}\cdots e^{u_0}  | \leq \frac{t-s}{1-\sum_{i= 0}^n|u_i|}=C_n(t,s) $$
over the set
$$ B_n=\{(t,s) \in \RM_{>0}\;|\;\sum_{i=0}^n |u_i|<t-s \}.$$
For $(t,s) \in \bigcap_n B_n$, the sequence $(g_n(t,s)) \subset Hom(E_t,E_s)$ is easily seen to be a Cauchy sequence using the estimate:
\begin{align*}
 |g_{n+1}-g_n|=|(e^{u_{n+1}}-\Id)g_n| &\leq C(t,s)|(e^{u_{n+1}}-\Id)| \\
 &\leq C(t,s)\frac{\nu_{n+1}}{1- \nu_{n+1}} \sim C(t,s) \nu_{n+1}  
\end{align*}
with $\nu_{n+1}=|u_{n+1}|/(t-s) \to 0$ and 
$$C(t,s):=\frac{t-s}{1-\sum_{i \geq 0}^n|u_i|} $$
This proves the theorem.
\end{proof}

%%%%%%%%%%%%%%%%%%%%%%%%%%%%%%%%%%%%%%
\section{The fixed point theorem}
\label{S::fixed_point}
\subsection{Arnold spaces iterations}
Consider an Arnold space
$E \to \NM .$
A sequence $(x_n), x_n \in E_n$  is called {\em summable} if the series $\sum |x_n|$ converges. In practice, we have bounded maps
$$E_n \to E_\infty $$
and therefore a summable sequence defines a sum in $E_\infty$. Assume for instance that $(K_n) \subset \CM^d$ is a decreasing sequence of closed sets, then $E_n=\Ot^c(K_n)$ are  Banach spaces and hence Kolmogorov spaces. Thus we have an Arnold space
$$E \to \NM $$
with fibre $E_n$. If we consider the set of Whitney smooth functions 
$$E_\infty=C^\infty(K_\infty,\CM),\ K_\infty=\bigcap_{n \in \NM} K_n$$
then we have bounded maps
$$E_n \to E_\infty $$
and the image of a summable sequence in $E$ is a summable sequence in $E_\infty$. 
%%%%%%%%%%%%%%%%%%%%%%%%%%%%ù
%%%%%%%%%%%%%%%%%%%%%%%%%%%%
\subsection{Fixed point theorem and HNF iteration}
Let us now come back to the HNF iteration.
For the HNF iteration, we choose a falling Bruno sequence $a \in \Bt^-$ and the sets $W_{n,s}=Z_{n,s}(a) \times D_{s}^3$ with coordinates $\omega_i$, $\tau_i,q_i,p_i$ and $V_{n,s}= Z_{n,s} \times D_{s}$ with coordinates $\omega,\tau$. These define Arnold spaces
$$E:=\Mt^c(V) \times \Theta^c(W) \times \Mt^c(W) \to \NM \times \RM_{>0}$$
 Propositions \ref{P::estimate} and \ref{P::Borel}  imply that there exists an increasing Bruno sequence $(b_n)$ such that
 the maps
$$\phi_n:(A,B,v) \mapsto (A,0)+f_n(A,B,v) $$
involved in the HNF iteration are in fact horizontal sections of $\Bt(E,E)$ over a set $U_n$
$$U_n =\{(r,s,t) \in \RM_{>0}^3: r  \leq \l_n(t,s)  \}$$
for some  function of the form
$$\l_n(t,s)=b_n \left(\frac{t}{s}\right)^{2^n}t^{-l}(t-s)^k  $$
and
$$|\phi_n(x)-\rho_{n,n+1}(x)|_s \leq \l_n^{-1}(s,t)|x|_t $$
Here $(b_n)$ is a falling Bruno sequence which depends on $(a_n)$.

Surprisingly enough, these conditions are sufficient to ensure fast convergence of the HNF iteration:

 \begin{theorem}
  Let $E \to B$ be an Arnold space and define
  $$\l_n(t,s)=b_n \left(\frac{t}{s}\right)^{2^n}t^{-l}(t-s)^k  $$
  with $k,l \geq 0$ and $b \in \Bt^-$. A sequence of sections
  $$\phi_n \in \G(U_n,\Bt(E_n,E_{n+1})) $$
  satisfying the estimate 
  $$|\phi_n(x)-\rho_{n,n+1}(x)|_s \leq \l_n^{-1}(s,t)|x|_t $$
  with
  $$U_n =\{(r,s,t) \in \RM_{>0}^3: r \leq \l_n(s,t)  \}.$$
  Then for any falling Bruno sequence $c=(c_n) \in \Bt^-$ and $r_0 \leq 1$ which satisfy:
  $$c \leq 1 \text{ and } \sum_{n \geq 0} c_n < r_0  $$
  there exists a decreasing sequence $(s_n)$ which converges to a positive limit such that
  $(r_0,s_n,s_{n+1}) \in U_n$ for any $n \in \NM$, the sequence of iterates 
  $$x_{n+1}=\phi_n(s_n,s_{n+1},x_n)$$
  exists for any $x_0 \in E_{s_0}(c_0)$ and moreover
  $|x_n|<c_n $.
  \end{theorem}
\begin{example}
Consider the Arnold space
$$\NM \times \RM_{>0} \times \NM \times \RM_{>0} \to \RM,\ (n,t,x) \mapsto (n,t)   $$
where the connecting morphisms identify the different fibre with the  same $v$.
The theorem says that the real sequence
$$x_{n+1}=x_n+\l_n(s_{n+1},s_n)^{-1},\ \l_n(s,t)=a_nt^{-l}\left(\frac{t}{s}\right)^{2^n}(t-s)^k $$
converges for an appropriate choice of the sequence $(s_n)$, for $k,l>0$.
This statement about real sequence captures all the complexity of the theorem as will be seen in the proof.
\end{example}
\begin{proof}
By the absorption lemma (Lemma \ref{L::absorb}), we may choose $\rho \in \Bt^-,s_0>0$ such that
$$\l_n(s_{n+1},s_n)^{-1}\leq   c_{n+1}$$
Define the converging sequence $s$ by
$$s_{n+1}=\rho^{1/2^{n}}s_n .$$ 
We have $(r_0,s_n,s_{n+1})  \in U_n$. Indeed as 
$$\l_n(s_{n+1},s_n)^{-1}\leq   c_{n+1}$$ and $c_k  \leq 1$ for any $k$, 
we deduce that
$$\l_n(s_{n+1},s_n) \geq 1 $$
while $r_0 \leq 1$. This shows the assertion.

An obvious induction shows that
$$|x_n| \leq \sum_{i=0}^n c_i<r  $$
Indeed 
$$|x_{n+1}| \leq |x_n|+|x_{n+1}-x_n| \leq |x_n|+|\l_n^{-1}(s_{n+1},s_n)| \leq   \sum_{i=0}^{n+1} c_i .$$
This shows that the iteration is well-defined and moreover
$$|x_{n+1}-x_n|\leq \l_n(s_{n+1},s_n)^{-1} \leq c_n$$ 
This concludes the proof of the theorem.
\end{proof}
%%%%%%%%%%%%%%%%%%%%%%%%%%%%%%%%%%%%%%%%%%%ù
\subsection{Convergence of the HNF iteration}
For the HNF iteration, we choose a falling Bruno sequence $a \in \Bt^-$ and the sets $W_{n,s}=Z_{n,s}(a) \times D_{s}^3$ with coordinates $\omega_i$, $\tau_i,q_i,p_i$ and $V_{n,s}= Z_{n,s} \times D_{s}$ with coordinates $\omega,\tau$. These define Arnold spaces
$E^k=\Mt(\Ot^k(V) \times \Theta(\Ot^k(V)) \times \Ot^k(W))$.
Therefore applying the fixed point theorem, we get the
\begin{theorem}
\label{T::Lagrange} Consider an analytic Hamiltonian of the form
\[ H=\sum_{i=1}^n \a_i p_iq_i +O(3) \in \CM\{p,q\} \]
and put
\[ F_0:=H+\sum_{i=1}^d\omega_i p_iq_i =A_0+B_0\]
with $\a \in \CM^d(a)$ where $a$ is a falling Bruno sequence.
There exists a falling sequence $s=(s_n)$ converging to a positive number such that the vector fields $v_n$ of the HNF iteration exponentiate to elements
\[ \varphi_n=e^{-v_n} \in Hom(\Mt(\Ot^k(W))_{n,s_n},\Mt(\Ot^k(W))_{n+1,s_{n+1}})\]
and  the composition 
$$\Phi_n=\varphi_{n-1}\varphi_{n-2}\ldots\varphi_1\varphi_0$$
converges to a Poisson morphism 
\[\Phi_{\infty} \in Hom(\Mt(\Ot^k(W))_{0,s_0},\Mt(C^k(W(a)))_{s_\infty}),\] 
which  reduces $F_0$ to its Hamiltonian normal form.
\end{theorem}
This concludes the functional analytic part of the paper. 
 
%%%%%%%%%%%%%%%%%%%%%%%%%%%%%%%%%%%%%%%%%%%%%%%%%%%%%%%%%%%%%%%%%%%%%%
\section{Arithmetic density}
\label{ArithmeticDensity}
%%%%%

\subsection{Frequency relations}
\label{SS::relation}
Recall that in the Hamiltonian normal form iteration, the original Hamiltonian $H(p,q)$ 
is obtained from $F_0$ by equating to zero the $G_{0,i}:=\omega_i$:
\[F_0(\omega=0,\tau,p,q)=H(p,q).\]
After the first iteration step, the functions $G_{0,i}:=\omega_i$ 
are transformed, we truncate them to define the next frequency relation
$$G_{1,i}(\omega,\tau)=[e^{-v_0}\omega_i]^4,\ G_1=(G_{1,1},\dots,G_{1,d}),$$
but as the vector field $v_0$ happens to be Hamiltonian, there are no 
$\partial_{\omega_i}$ terms, and $G_1=G_0$ and so
$$[A_1(\omega_1=0,\tau,p,q)]^{2^1+2}=[B(q,p)]^{2^1+2}$$
is also the first iterate of the Birkhoff normalisation.
However, at the next step, the vector field $v_1$ might contain a 
non-Hamiltonian part, so that now in general $A_2 \neq A_1$:
$$[A_2(G_2=0,\tau,p,q)]^{2^2+2}=[B_H(q,p)]^{2^2+2} \ \mod I^2 \oplus \CM[[\tau]]$$
with $G_2=[e^{-v_1}G_1]^6$.
To make this relation between $F_2$ and $H_2$ explicit, we need to solve the equation $G_2=0$. From the fact that vector fields $v_1$ has order $2$, we have
\[G_{2,i} = \omega_i +O(2),\]
so the equations $G_{2,i}(\omega,\tau)=0$ can indeed be solved for the $\omega_i$ and we obtain power series 
\[\omega_{2,i}(\tau) \in \CM[[\tau]],\]
such that
\[G_{2,i}(\omega_2(\tau),\tau)=0 .\]
More precisely by the Weierstrass preparation theorem, we have
$$G_{2,i}(\omega,\tau)=u_i(\omega,\tau)(\omega_i-\omega_{2,i}(\tau)), $$
where the $u_i \in R_0$ are units, i.e. $u_i(0) \neq 0$.

Comparing with the Taylor expansion of \ref{SS::Moser}:
$$B(pq)= B(\tau+f)=B(\tau)+\sum_{i=1}^db_i(\tau)f_i\ \mod I^2,$$
we deduce that the coefficients $b_i(\tau)$ of the frequency map are related to the $\omega_{n,i}(\tau)$ by the congruence
\[\alpha_i+\omega_{n,i}(\tau) = b_i(\tau) +O(2^n),\]
The germ at the origin of the Hamiltonian normal form gives the first terms of the Birkhoff normal form
$$A_{H,n}(\omega_n(\tau),\tau)=B(\tau)+O(2^n+2). $$
%Note that from these considerations we can conclude that one can exhibit the Birkhoff 
%normal form as a fixed point:
%\begin{proposition} The Birkhoff normal form is related to its Hamiltonian normal form via the first order partial differential differential equation:
%$$A_H(\tau,\del B_H(\tau))=B_H(\tau) $$
%where $A_H$ is the Hamiltonian normal form of $H$.
%\end{proposition}

\begin{definition}
The functions $G_{n,i}$ are called the {\em frequency relations} and the algebraic manifolds defined by these equations are called the {\em frequency manifolds}:
\[X_n:=\overline{\{ (\omega,\tau)\;|\; 
[G_{n,1}]^{2^n+2}(\omega,\tau)=\dots=[G_{n,d}]^{2^n+2}(\omega,\tau)=0 \}} \]
\end{definition}
In a neighbourhood of the origin, the frequency manifolds are the graphs  of the map  germs
$$\tau \mapsto \omega_n(\tau)=(\omega_{n,1}(\tau),\dots,\omega_{n,d}(\tau))$$
These maps identify functions in the $\tau$ variables with functions on $X_n$ and $H_n$ with the restriction of 
$F_n$ to $X_n$. So geometrically $e^{-v_n}$ is the flow at time $-1$ of the vector field $v_n$ and it maps the manifold $X_{n-1}$ to $X_n$.
 
 The problem we now address is to estimate the measure of the manifolds $X_n(a)$ in a common neighbourhood of the origin
 but before that let us consider a baby example.
%%%%%%%%%%%%%%%%%%%%%%%%%%%%%%%%%%%%%%%%%%
\subsection{A trivial example}
\label{SS:Freq_manifold}
Let us go back to our $d=1$ example:
$$H(q,p)=pq+p^3+q^3 $$ 
and the ideal $I=(f)$ with $f=pq-\tau$.  
The iteration produces
$$\begin{array}{ l  l  } 
 A_{H,0}&=(1+\omega)\tau\\
    G_0&=\omega\\
 A_{H,1}&= (1+\omega)\tau\\
    G_1&=\omega\\
 A_{H,2}&=(1+\omega)\tau+3\frac{\tau^2}{1+\omega}\\
    G_2&=\omega+\frac{6\tau}{(1+\omega)}+o(2)
\end{array}$$

The frequency manifolds $X_0,X_1$ are just the line $\omega=0$ lying in the $\{ \tau,\omega \}$-plane while, up to first order expansion in $\tau$, the frequency manifold $X_2$ is a parabola:
$$X_2=\{ (\tau,\omega):\omega^2+\omega+6\tau=0 \}  $$
\begin{figure}[htb!]
\includegraphics[width=0.4\linewidth]{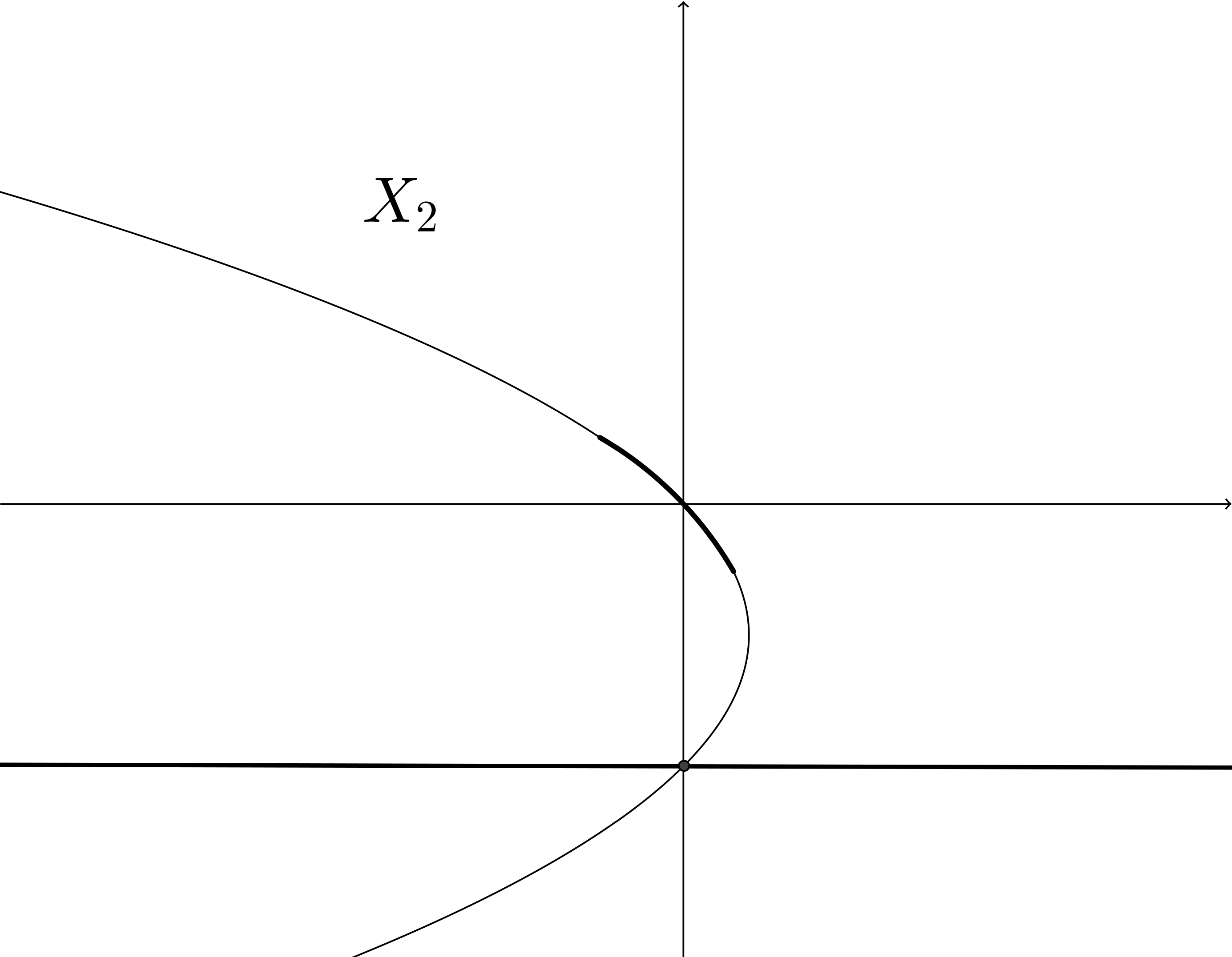} 
\end{figure}
\ \\ 
We now look at the germ of the parabola at the origin, that is, we solve the equation $G_2=0$ gives the first-order frequency of motion
$$\omega_2(\tau)=-6\tau+O(4) $$ 
Substituting $ \omega_2(\tau)$ into  $F_2$ and taking the constant term
by putting $pq=\tau$, we obtain the first two terms of the Birkhoff normal form:
$$ A_{H,2}(\omega_2(\tau),\tau)=\tau-3\tau^2+O(6)$$

Going to the next order one finds:\\
\[\omega_3(\tau)=-6\tau-36\tau^2-420\tau^3+O(8)\]
\[ A_{H,3}(\omega_3(\tau),\tau)=\tau-3\tau^2-12\tau^3-105\tau^4+O(10),\]
which reproduces the first four terms of the Birkhoff normal form. However the Birkhoff normal does not see that the curve bends back to the resonance. The functions $H_2, H_3,\ldots$ can be seen as the germs at the origin of $F_2,F_3,\ldots$ restricted to these curves.

In this example, the only resonance is at $\omega=-1$ so there is no problem of measure estimates. In the higher dimensional case,
we will have to throw away more and more neighbourhoods of resonance hyperplanes in the $\omega$-component as the iteration process goes by. 

It can be proven that the frequency manifolds are, in general, quite far from being graphs: they wind more and more around the origin\footnote{We intend to publish this result elsewhere.}. Now convergence of the Hamiltonian normal form iteration implies convergence of the frequency manifolds as graphs 
in $\RM^d(a)$ when $a$ is a Bruno sequence. This means that the manifold can be cut so to become a
graph. 

Intersection of the frequency manifold  with a two dimensional plane might give something like the following picture:\\
\begin{figure}[htb!]
\includegraphics[width=0.4\linewidth]{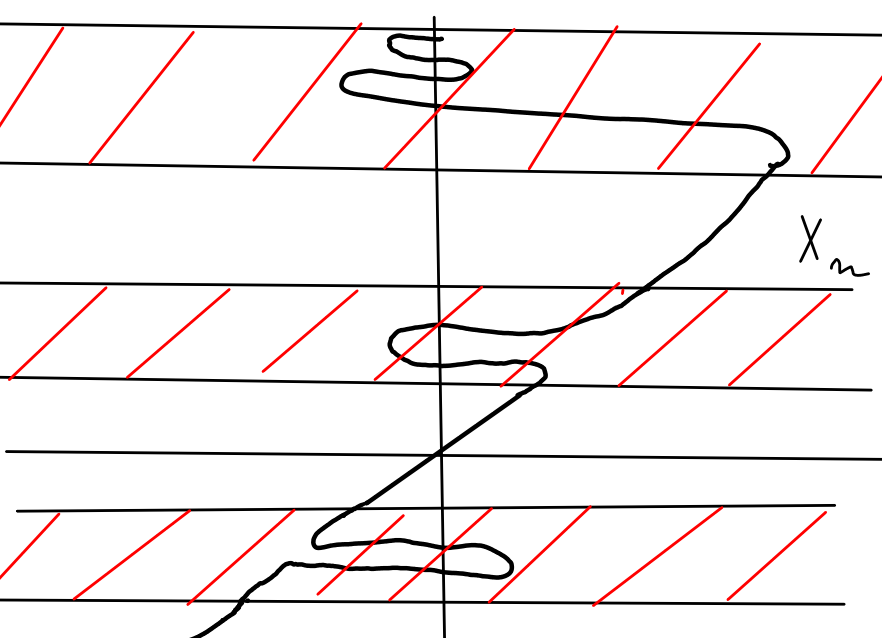} 
\end{figure}
\ \\ 
If we cut the curve along the hatched strips then we get a curve which is a graph.  The problem is now to get good measure estimates for the cut
manifold. This leads to revisit the classical works of Diophantine geometry, in the spirit of Kleinbock and Margulis~\cite{Kleinbock_Margulis}.
%%t is not diffic
%%%%%%%%%%%%%%%%i
\subsection{Lebesgue density}
The  {\em Lebesgue density of a measurable set $X \subset \RM^d$ at a point $p$}  is defined  (when the limit exists) by
\[ \delta(X,p):=\lim_{\e \longrightarrow 0}\frac{m(B_{p,\e} \cap X)}{m(B_{p,\e})}\]
Here $m$ denotes the Lebesgue measure and $B_{p,\e} \subset \RM^d$ is the ball with radius $\e$, centred at $p$.

For instance, the half-line $\RM_\geq 0$ has density $1$ at all of its points except  at the origin where it has density $1/2$. As a further example, consider the disjoint union of intervals:
$$X =\bigcup [a_i,b_i] $$
defined by decreasing geometric sequences
$$b_n=\frac{1}{4^n},\ a_n=\frac{b_n}{2}. $$
The measure of $[a_n,b_n]$ equals $a_n$ and 
$$m([-\e,\e] \cap X) \sim 4^{n-1}\sum_{k \geq n}\frac{1}{4^k}=\frac{1}{3} $$
%Then if $\a_n$ is summable, the density of $X$ at the origin is equal to zero.
Both examples are exceptional as we compute the density at ''boundary points'': the {\em Lebesgue density theorem} asserts that $X$ has a density at almost all points $p$, and that for almost all points the density is either $0$ or $1$.

One may replace $\RM^d$ by a $d$-dimensional Riemannian $C^1$-submanifold $M \subset \RM^n$ and check that the density is independent on the metric.
The density thus defined can be computed as in $\RM^d$ using a $C^1$-chart.\\

%%%%%%%%%%%%%%%%
\subsection{Density of arithmetic classes}
The choice of a positive decreasing sequence $a=(a_n)$ defines a set
$$\RM^d(a):=\{ \a \in \RM^d:\forall n \in \NM,\ \forall J \in \ZM^d\setminus \{0\},\ \| J \| \leq 2^n,\   |(\a,J)| \geq a_n \} $$
lying in the complement of the resonance hyperplanes. We will need to study the complex geometry of such sets, therefore we define
$$\CM^d(a):=\{ \a \in \CM^d:\forall n \in \NM,\ \forall J \in \ZM^d\setminus \{0\},\ \| J \| \leq 2^n,\   |(\a,J)| \geq a_n \} $$
where the brackets $(-,-)$ stand for the complexification of the Euclidean scalar product (and not the Hermitian product).

We also define $X(a)=X \cap \CM(a)$ for any given subset $X$. This sets are obtained by inductive construction and therefore we also define
$$\CM^d(a)_n:=\{ \a \in \CM^d:\forall k \leq n,\ \forall J \in \ZM^d\setminus \{0\},\ \| J \| \leq 2^k,\   (\a,J) \geq a_k \} $$
and the corresponding sets $X(a)_n$.

The following elementary but fundamental result shows that although arithmetic classes may have an empty interior, after replacing $a$ by a slightly smaller sequence $\nu a$, they are {\em big} in the sense of measure theory.

\begin{proposition}[\cite{arithmetic}]
\label{P::arithmetic}
Let $a=(a_n)$ be a positive decreasing sequence and $\beta \in \RM^d(a)$. 
Let $\nu=(\nu_n)$ be another positive decreasing sequence with $\nu_i \le 1$ and 
$\sum_{i=1}^{\infty}\nu_i <\infty$.
Then the Lebesgue density of $\RM^d(\nu a)_{\infty}$ at $\beta$ is equal to $1$: 
\[\lim_{\e \longrightarrow 0}\frac{m(B_{\b,\e}(\nu a))}{m(B_{\beta,\e})}=1\]
Here $\nu a$ is the sequence with terms $\nu_na_n$ and $m$ denotes the
Lebesgue measure, $B_{\b,\e} \subset \RM^d$ the ball with radius $\e$, centred at $\b$.
\end{proposition}
A similar result holds in the  complex case. 
%%%%%%%%%%%%%%%%%%%%%%%%
\subsection{Curvedness}
Taking the preimage of a positive measure set by a map might give a zero measure set. Consider for instance the preimage of the set
$$X=\{ (x,y) \in \RM^2:|y|>x^2 \} $$
by the map
$$f:\RM \to \RM^2,\ t \mapsto (t,0) $$
The image of the map is the first coordinate axis which intersects $X$ only at the origin thus we get
$$\dt(X,0)=1,\ \dt(f^{-1}(X),0)=0. $$
\begin{figure}[htb!]
\includegraphics[width=0.4\linewidth]{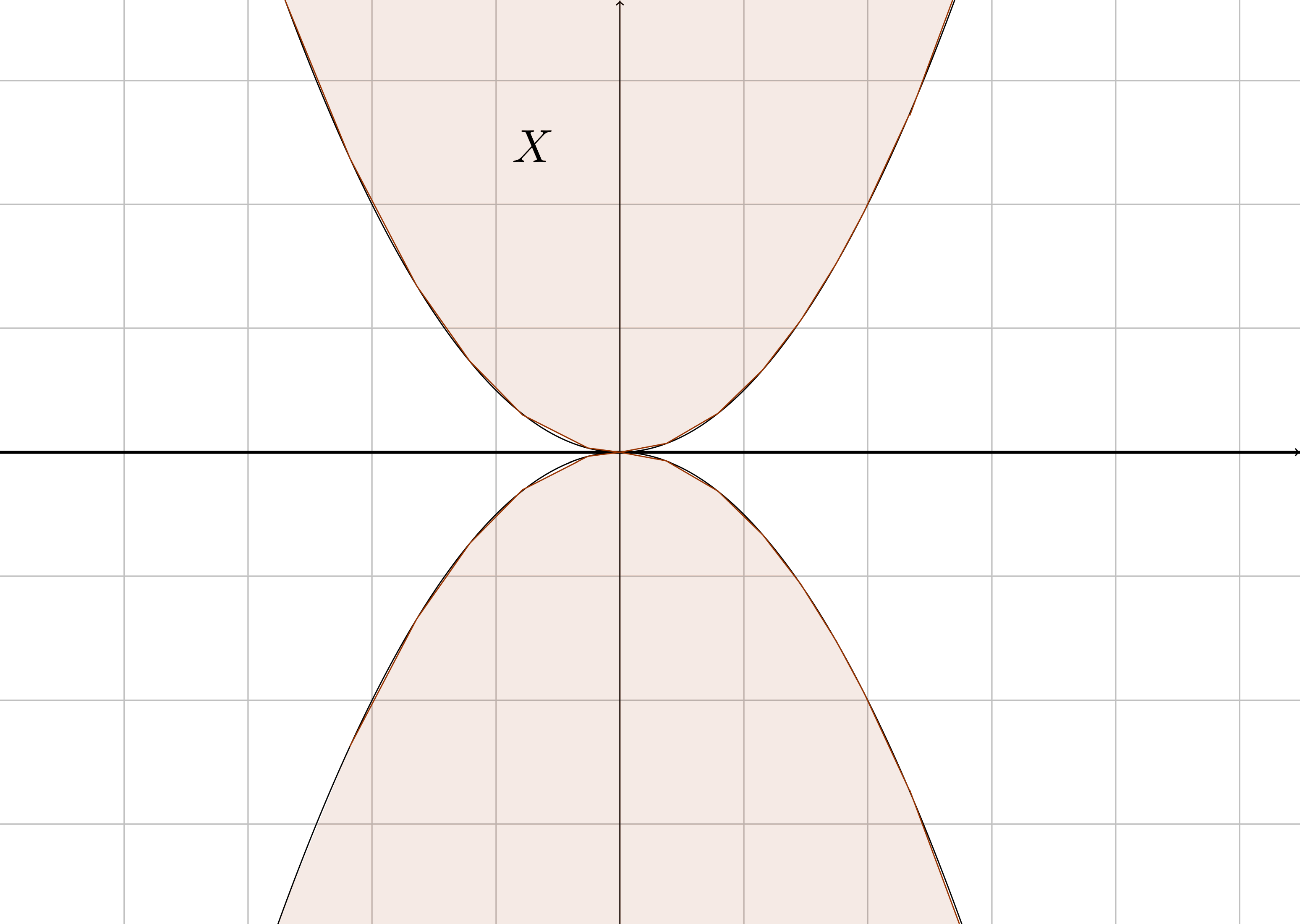} 
\end{figure}
\ \\ 
Arithmetic classes are defined by linear inequalities so we do not get such phenomena.
Say we have a map
$$f:\RM^k \to \RM^d $$
The preimage of an arithmetic class is obtained by throwing away preimages of neighbourhoods of resonance hyperplanes
$$\{ x \in \RM^k:|(f(x),I )| <a_i \} $$
Now we may forget that $I$ has integer components and simply look for linear combinations
$$F_b(x)=\sum_{j=1}^k b_j f_j(x) $$
and estimate the Lebesgue measure of the set $|F| \leq a_i=\e$ when $\e$ is small.   

Curvedness conditions on the map are used to control the density
for such inverse image sets defined by linear inequalities. We start by defining this crucial notion due to Kleinbock and Margulis~\cite{Kleinbock,Kleinbock_Margulis}.

\begin{definition} For a subset $X \subset \RM^d$ we denote by $\text{Span}(X)$ the {\em span} of $X$, i.e. the smallest affine subspace of $\RM^d$ containing $X$. For a map germ
$f:(\RM^m,\a) \to (\RM^d,\b)$ we call the span of $f$ at $\a$ the affine space
\[\text{span}(f,\a) =\cap_U \text{Span}(f(U)) \subset \RM^d ,\]
where the intersection runs over all representatives of the germ $f$.
\end{definition}
Note that we may always take a representative $f:U \to \RM^d$ of $f$ such that
\[\text{span}(f,\a) =\text{Span}(f(U)) \subset \RM^d .\]
\begin{example}
Consider for instance the map:
$$f:(\RM,0) \to (\RM^4,0):t \mapsto (t,t^3,t^5,0)$$
 then we get that
 $$\text{span}(f,\a)=\RM^3 \times \{ 0 \}.$$
\end{example}

\begin{definition}
For a map germ  $f:(\RM^m,\a)  \to (\RM^d,\b)$ and a multi-index $I \in \NM^d$, we denote by
\[ \d^If(\a):=\frac{\partial^I f}{\partial x^I}(\a) \in \RM^d .\]
the vector of $I$-th derivative of $f$ at $\a$. Furthermore we put
\[ \text{span}_l(f,\a):=\text{Span}\{\d^I f(\a): I \in \NM^m,\ |I| \leq l\}.\]
\end{definition}
\begin{example} For the map
$$f:(\RM,0) \to (\RM^3,0):t \mapsto (t,t^3,t^5,0)$$
we get that 
 \begin{align*}
  \text{span}_1(f,\a)&=\text{span}_2(f,\a)=\RM \times \{ 0\},\\
  \text{span}_3(f,\a)&=\text{span}_4(f,\a)= \RM^2 \times \{ 0\},\\
  \text{span}_5(f,\a)&=\RM^3 \times \{ 0 \} \\
 \end{align*}
\end{example}
Clearly one has
\[ \{\b\}=\text{span}_0(f,\a) \subset \text{span}_1(f,\a) \subset \text{span}_2(f,\a) \subset \ldots \subset \text{span}(f,\a).\]

\begin{definition}  
 A $C^k$ map-germ $f:(\RM^m,\a)  \to (\RM^d,\b)$ is called {\em $l$-curved} ($l \le k$) if
  \[ \text{span}_l(f,\a)=\text{span}(f,\a).\]
$f$ is called {\em curved} if it is $l$-curved for some $l$. The smallest possible value of $l$  we call the {\em torsion index} and denote it by $t_\a(f)$.
If such an index does not exist, we set $t_\a(f)=\infty$.
A map $f:\RM^m \to \RM^d$ is curved at $\a$ if the corresponding germ $f:(\RM^m,\a) \to (\RM^d,f(\a))$ is curved.
\end{definition}
\begin{example}
The map germ
$$f:(\RM,0) \to (\RM^3,0):t \mapsto (t,t^3,t^5,0)$$
is curved with torsion index $t_0(f)=5$.\\
A flat map-germ like
$$ f:(\RM,0) \to (\RM,0),\ x \mapsto e^{-1/x^2} $$ is not curved and $t_0(f)=\infty$.
\end{example}
  
The following {\em arithmetic density theorem} was proven in \cite{arithmetic}, it will be our fundamental tool\footnote{In fact a much better bound can be given for the sequence $\nu$.}:
\begin{theorem}
\label{T::arithmetic}
Consider a curved map-germ
$$f=(f_1,\dots,f_d):(\RM^m,\a)  \to (\RM^d,\b) $$
 a real positive decreasing sequence $\s=(\s_k)$ and let $\nu=(\nu_k)$ be a real positive sequence  such that the sequence 
$$(2^{kd}\nu_k^{1/mt_\a(f)})$$ is summable and $\nu_k<1$ for all $k$'s.
Then the Lebesgue-density of the set $f^{-1}(\RM^d(\nu \s))$ at the origin is equal to $1$.\end{theorem}
%%%%%%%%%%
\subsection{Analytic maps}
Flat maps like $e^{-1/x^2}$ do not occur during the iteration because the frequency manifolds are analytic. However passing to the limit, this could happen but in fact it does not as we shall see. Let us first start by an observation due to Kleinbock: 
 \begin{proposition}[\cite{Kleinbock}]A real analytic map $f:U \supset \RM^m \to \RM^d$
 is curved at any of its points. Moreover if $U$ is pathwise connected, we have
$$\text{span}(f,\a)=\text{span}(f(U)) $$
at any point $ \a \in U$.
\end{proposition}
\begin{proof}
 The sequence of affine spaces
$\text{span}_n(f,\a)$ generated by the vectors $\d^If(\a)$, $|I| \leq n$ stabilises,
say at level $l$. Let $u$ be a linear form vanishing on $\text{span}_l(f,\a)$, 
then the Taylor series of $u \circ f$ vanishes identically and therefore so does its germ at $\a$. Let now $\g$ be any path in $U$, the analytic continuation of $u \circ f$ along $\g$ vanishes and therefore, if $U$ is pathwise connected, it vanishes identically  in $U$.
\end{proof}
%Similarly one sees that for an analytic map $f:\RM^m \to \RM^d$,  the torsion i%ndex at each point $\tau_\a(f)$ varies in an {\em upper semi-continuous} manner.%\\
Of course if $U$ is not connected then the proposition fails. Take for instance  
$$f:\RM \setminus \{ 0 \} \to \RM,\ x \mapsto \left\{ \begin{matrix}0 & \text{ for } x <0\\ x & \text{ for } x >0 \end{matrix} \right.$$
Take $ \a <0$ and $\b >0$ we have:
$$\text{span}(f,\a)=\{ 0 \},\  \text{span}(f,\b)=\text{span}(f(U)) =\RM$$

\subsection{Why do we need complex geometry?}
In principle, the previous proposition allows us to go from local to global span. Unfortunately the sets $\RM^d(a)$ are disconnected and therefore we cannot apply the proposition. This forces us to look at the {\em complex geometry of arithmetic classes}\footnote{We thank Abed Bounemoura for pointing out that the Emmental properties that we will  now discuss were implicitly used in our original argument.}.

The previous proposition has of course an obvious variant  in the holomorphic case: {\em a holomorphic map $g:U \supset \CM^m \to \CM^d$
 is curved at any of its points and if $U$ is pathwise connected, we have
$$\text{span}(g,\a)=\text{span}(g(U)) $$
at any point $ \a \in U$.}
  
  It is an elementary but important fact that these notions behave correctly with respect to complexification: 
\begin{corollary}
Let $f:(\RM^k,\a) \to (\RM^d,\b)$ be a real analytic map which is the restriction of a complex holomorphic map
$g:(\CM^k,\a) \to (\CM^d,\b)$, then $\text{span}(f,\a)$ is the real part of $\text{span}(g,\a)$ and both have the same torsion index.
\end{corollary}
\begin{proof}
If we write $z_k=x_k+iy_k$ then the Cauchy-Riemann equations imply that
$$\d_{z_k}g(z)=\d_{x_k} f(x) $$
for $z=x \in \RM^k$
\end{proof}
Note that the corollary is wrong if the map $g$ is not holomorphic. Consider for instance, the map
$$g:(\CM,0) \to (\CM,0),\ z \mapsto (z-\bar{z})^2$$
then $f(x)=0$. As $\d_z(z-\bar{z})^2=2$,
we deduce that
$$\text{span}(g,0)=\CM \text{ and } \text{span}(f,0)=\{ 0 \} $$

Assume  now that we have a sequence of holomorphic maps defined on a compact set $K \subset \CM^n$: 
$$g_n:K \to \CM^d$$
which $C^k$-converges to a limit, $k=t_\a(g)$: 
$$g_\infty: K \to \CM^d $$
then the conclusion of the corollary still holds as the equalities
$$\d_{z}^Ig_n(x)=\d_{x}^I f_n(x) $$
pass to the limit.
%%%%%%%%%%%%%%%%%%%%%%%%%%
\subsection{Emmental properties}
The complement of a complex analytic hypersurface in a complex manifold is pathwise connected. Our aim is to prove a KAM variant of this statement. Of course, if we consider a frequency manifold $X_n$ and look at the complement of a finite number of resonance hyperplanes, then we are back to complex geometry and it is
pathwise connected. But our situation
is different as we consider the manifold $X_n(a)$: the complement in $X_n$ of neighbourhoods of resonance hyperplanes. So the topology is a little bit more involved than usual.

\begin{proposition}
\label{P::Emmental}
Let $a=(a_n)$  be a decreasing positive sequence and $U \subset \CM^d$ be a disked set\footnote{If $x,y \in U$ then $x+\l (y-x) \in U$ for any complex $\l \in \CM$
satisfying $|\l| \leq 1$.}  For any $x,y \in \RM^d(a)_n$ there is a path joining $x$ to $y$ inside $U( a)_n$ for any $n \in \NM \cup \left\{ \infty \right\}$. 
\end{proposition} 
\begin{proof}
Let $L$ be the complex line passing through $x$ and $y$. It is parametrised by
$$\CM \to \CM^d,\ t \mapsto (1-t)x+t y   $$
 A set of the form:
$$C_I= \{\beta \in \CM^d: |(\beta,I)|  < a_k\}$$
is a cylinder. It intersect the line $L$ along a disk $D_I$. The center of the disk $D_I$ is the intersection of the  hyperplane
$$\Xi_I = \{\beta \in \CM^d: |(\beta,I)| =0\}$$
with the complex $L$ both given by real equations, it is therefore a real point. Therefore the set $L \cap D(a)_n$ is a complement of a discrete number of disks whose centers are real points. 

Consider now the disk $D_{x,y} \subset L \cap U$ centred at $x$ of radius $\| x-y\|$. Assume the disk $D_I$ intersects $D_{x,y}$. Both $x,y$ are real and do not lie on $D_I$ therefore $D_I$ is a disk contained in $D_{x,y}$ and has radius less than $1/2\| x-y\|$. So moving along purely imaginary values of $t$ until we reach the boundary of the disk $D_{x,y}$ and then on its boundary we join the point $x$ to the point $y$.\\
 
 \begin{figure}[htb!] 
\includegraphics[width=0.5\linewidth]{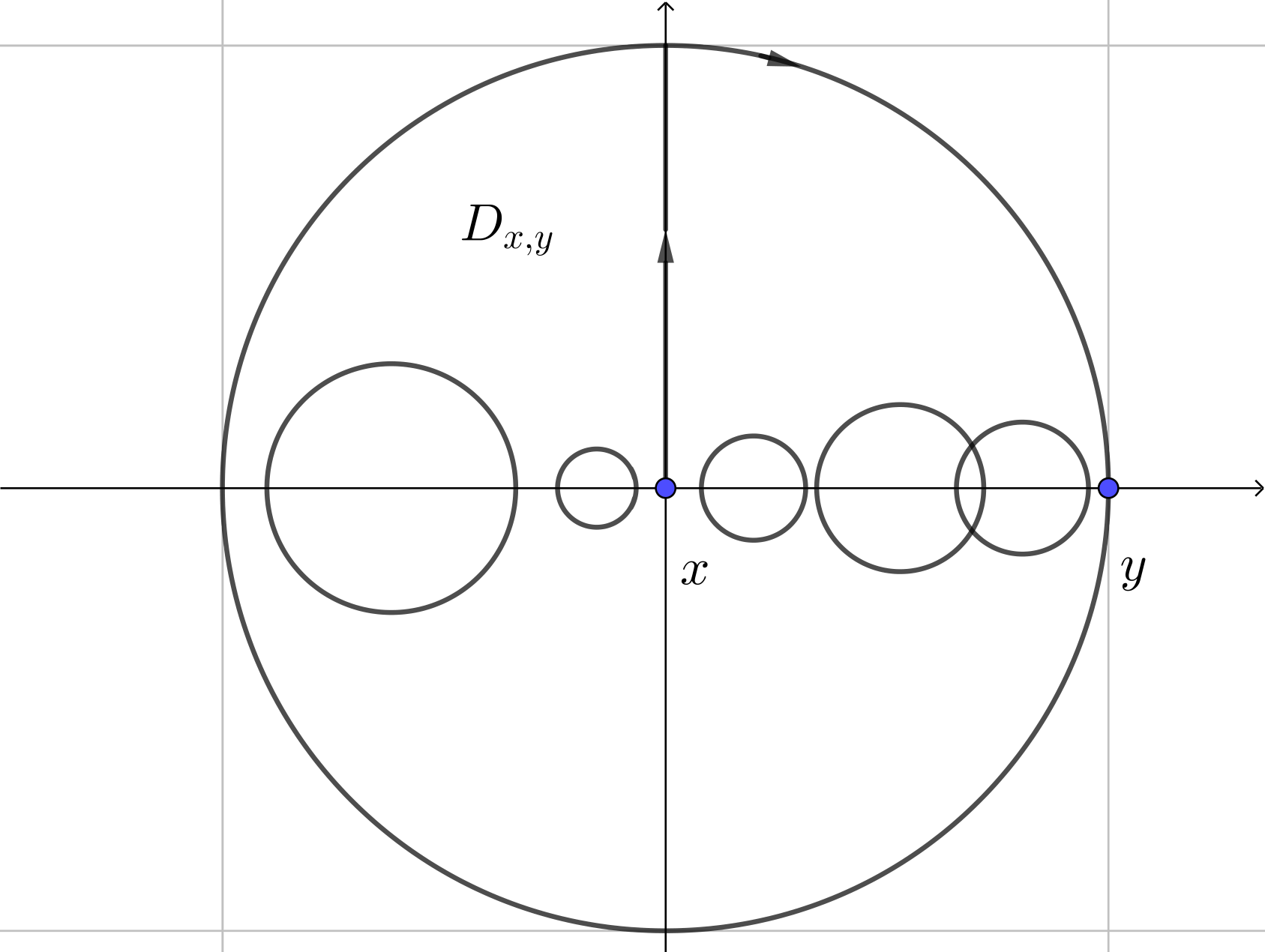}
\end{figure}
\end{proof}
The result of the proposition will be called the {\em Emmental property.} It admits the following variant:
\begin{proposition}
\label{P::Emmental_2}
Let $a$  be a positive decreasing sequence  and $U \subset \CM^d$ a disked set at distance $R>0$ from the origin. If 
$$\forall k \in \NM,\ a_k<\frac{R}{2^{k}}$$
then the set $U(a)_n$ is pathwise connected. Moreover if $a=O(4^{-k})$ then $U(a)$ is also pathwise connected.
\end{proposition} 
\begin{proof}
We prove that any two points $x$, $y$ with all but one equal coordinates can be joined by a path. By induction this will prove the proposition. To simplify the notations we assume that $x_i=y_i$ except for $i=d$. We also assume that $x$ is the centre of the ball.

Let $L$ be the complex line passing through $x$ and $x+e_d$ where $e_d$ is the last coordinate vector
$$e_d=(0,0,\dots,0,1) $$
The line is parametrised by
$$\CM \to \CM^d,\ t \mapsto x+t e_d   $$
and contains the point $y$.
 A set of the form:
$$C_I= \{\beta \in \CM^d: |(\beta,I)|  < a_k\}$$
is a cylinder. It intersect the line $L$ along the disk 
$$D_I=\{ t \in \CM : |I_d t+(x,I)| < a_k \}$$ 
 
\begin{lemma}
 For $I \neq J$, the disks $D_I$ and $D_J$ are disjoint.
\end{lemma}
\begin{proof}
The vectors 
$$u_1=(-I_2,I_1,0,\dots, 0),\ u_2=(0,-I_3,I_2,0,\dots,0),\dots$$
form an integer basis of the resonance hyperplane $\Xi_I$ and we have
$$\left(u_I,\frac{J}{\| J \|}\right) \geq \frac{1}{\| J \|} $$
Denote by $c_I \in U$ the centre of the disks $D_I$:
$$c_I=\sum_{j=1}^{d-1} \a_j u_j $$
The distance between the point $c_I$ and the hyperplane $H_J$ admits the estimate:
$$ d(c_I,\Xi_J)=\left|\left(c_I,\frac{J}{\| J \|}\right)\right| \geq \frac{\sum_{j=1}^{d-1} |\a_j| }{\| J \|} $$
Using the triangular inequality, we get that:
$$\| \sum_{j=1}^{d-1} \a_j u_j\| \leq \| I\| \leq \sum_{j=1}^{d-1} |\a_j| $$
and therefore
$$\frac{\sum_{j=1}^{d-1} |\a_j| }{\| J \|} \geq \frac{\| c_I \| }{\| I\|\,\| J \|} \geq \frac{R }{\| I\|\,\| J \|} $$
We have thus proved that
$$\| c_I-c_J\| \geq d(c_I,\Xi_J) \geq \frac{R }{\| I\|\,\| J \|} \geq \frac{R }{2^{n+m}} $$
where
$$n=\lceil \log_2\| I\| \rceil,\ m=\lceil \log_2\| J\| \rceil $$
From their equation, we see that the radius of the disks $D_I,D_J$ are respectively equal to $a_n/I_d$ and $a_m/J_d$ and therefore the assumption:
$$a_k<\frac{R}{2^{k}}$$
implies that these do not intersect.

\end{proof}

It is now easy to join the points $x,y$. We follow a straight line, if we hit the boundary of a circle $\overline{D_I}$ then we turn around this circle. At least one of the two moves does not hit the boundary of the polydisk in this way we construct a path joining the two points: 
 
 \begin{figure}[htb!] 
\includegraphics[width=0.5\linewidth]{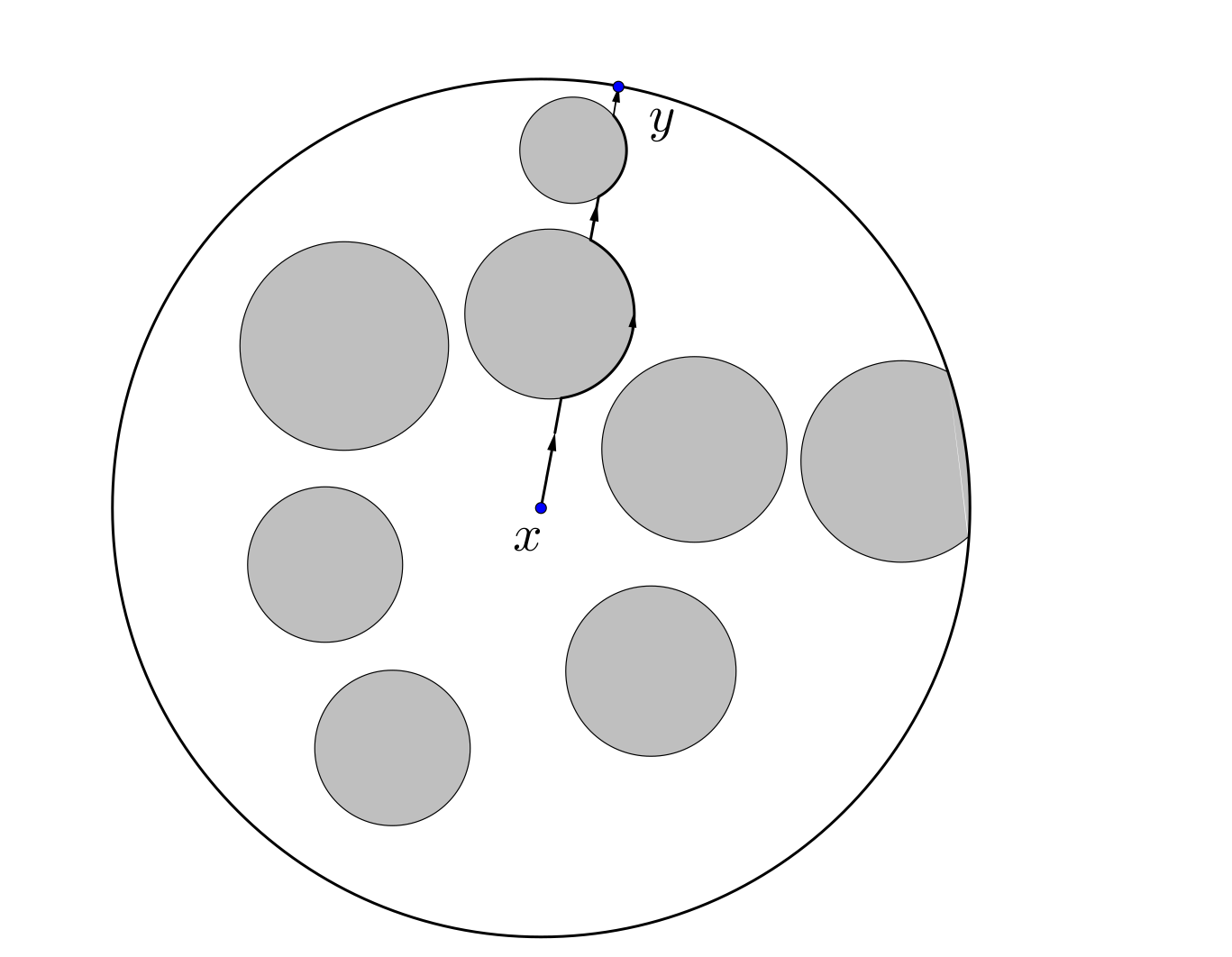}
\end{figure}
If we now let $n \to \infty$ and assume that $x,y \in U(a)$ then we construct inductively a sequence of paths $\g_n$ joining $x$ to $y$ by turning around the boundaries of the circles $\overline{D_I}$. The $C^0$-norm of the difference is bounded by the size of the additional circles therefore:
$$\|\g_n -\g_{n+1}\| \leq a_{n+1} $$
The sequence $(\g_n)$ is therefore a Cauchy sequence and thus converges to a limiting $C^0$-path. This proves the proposition.
\end{proof}
%%%%%%%%%%%%%%%%%%%%%%%%%%%%%
%%%%%%%%%%%%%%%%%%%%%%%%%%%%%%%%%%%%%%%%%
\subsection{From local span to global span}
To go from the span at the origin of the frequency manifold $X_n(a)$ to its global span, we need to prove connectedness of this set. The proof is similar to the one which asserts that irreducible analytic varieties are locally connected as we shall now see.

Recall that a function $f$ is  holomorphic inside a set $X \subset \CM^d$ if it is holomorphic inside an open neighbourhood containing $X$. Similarly we say  that $V \subset X$ is a complex manifold if it is the intersection with $X$ of a complex manifold.
\begin{proposition}
\label{P::connected}
Let  $ K $ be a pathwise connected compact set in $\CM^d$ containing the origin and let $U \subset \CM^d$ be an open neighbourhood of the origin. Let 
$$G_n: U \times K \to \CM^d,\ (x,y) \mapsto G_n(x,y) $$
be sequence of analytic maps  which is $C^1$-converging to a limit $G_\infty$.
Assume  that:
\begin{enumerate}
 \item $G_\infty(-,y=0)$ is not identically zero.
  \item $\d_y G_\infty$ is invertible inside $U \times K$.
\end{enumerate}
Then there exists a neighbourhood $W$ (independent on $n$) of the origin  such that for any $n$ large enough, the set $V(G_n) \cap W$ is a pathwise connected complex manifold.
\end{proposition}
\begin{proof}
First note that the assumptions and the implicit function theorem imply that, for $n$ large enough, $V(G_n)$ is locally a graph and hence a smooth manifold. Therefore the only non trivial part is the connectedness statement.

Choose $x \in U$, let
 $u:\CM^d \to \CM$ be a linear form and $r$ be such that for $y=0$, the function
$$f_{u,y}:D_r \to \CM,\ t \mapsto u \circ G_\infty(t x,y) $$ 
vanishes only at $t=0$. Here $D_r \subset \CM$ denotes the disk of radius $r$ centred
 at the origin.

The function $f_{u,0}$ is a holomorphic and therefore the multiplicity of a point $p \in D_r$ as a zero of $f_{u,0}$ is given by the integral formula
$$Z_u(y)=\frac{1}{2i\pi} \int_\g \frac{f'_{u,y}(\xi)}{f_{u,y}(\xi)} d\xi $$
where $\g$ is the boundary of the disk $D_r \subset \CM$ and $y=0$. 

As the function $Z_u$ depends continuously on the parameter $y$ and  $K$ is pathwise connected,
we  may choose a neighbourhood $W_u \subset K$ such that for any $(x,y) \in W_u$, the function $Z_u$ remains constant. 

Similarly we define
$$f_{u,y}:D_r \to \CM,\ t \mapsto u \circ G_n(t x,y) $$
and a corresponding function $Z_{u,n}(y)$.
As $(G_n)$ converges to $G_\infty$, for $n$ large enough and $Z_{u,n}$ is  integer valued, we conclude
that it is equal to $Z_u$ for $n$ large enough. By taking a finite number of
independent linear forms, we finally have a neighbourhood $W'$ over which all these integrals remain constant whatever linear form we take. 

We may now repeat the standard proof of local connectedness of irreducible complex analytic varieties using the Weierstrass division theorem to the map $G_n$ restricted to $W'$~(see e.g.~\cite[Chapter 4, Theorem 4.19]{Demailly_livre}). This shows that $V(G_n) \cap W'$ is pathwise connected and concludes the proof of the proposition.
 \end{proof}
 We may apply the proposition to the frequency manifolds $X_n$: 
 \begin{corollary} Let $H=\sum_{i=1}^d \a_i p_iq_i+O(3)$ be an analytic Hamiltonian and let $a \in \Bt^-$
 be such that $a_k<2^{-k}\| \a\|$ for any $k \in \NM$. Then there exists a neighbourhood of the origin $W \subset \CM^{2d}$ such that for $n$ large enough the complex manifolds $W \cap  X_n(a)$ are pathwise connected. If moreover $a_k=O(4^{-k})$ then $W$ can be chosen so that $W \cap X_\infty(a)$ is also pathwise connected. In this common neighbourhood the span of $X_n(a)$ equals the span of $X_n$ at the origin.

\end{corollary}
%%%%%%%%%%%%%%%%%%%%%%%%%%
\section{Curvedness of the HNF}
\label{S::curvedness}
We will now show that the Hamiltonian normal form is automatically curved, if it converges.
We are not able to prove the result directly for the Birkhoff normal form. In fact we are not able to use the Birkhoff normal form at all. The reason is the following. In the context of the iteration for the Hamiltonian normal form we were dealing with functions depending on $d$ frequency variables $\omega_i$ and $d$ Moser variables $\tau_i$. These two sets of variables play a very different role and
give rise to different filtrations on the ring of functions. For the Moser
variables it is the filtration by the powers of the maximal ideal
that is relevant, whereas for the frequency variables we have to consider the filtration defined by valuations on the resonant hyperplanes.

When we represent the frequency manifolds by solving the equations $G_n(\omega,\tau)=0$ and write the $\omega$ variables in terms of the $\tau$ variables, then these filtrations interact in a complicated way. For this reason it is useful
to reformulate curvedness and the arithmetic density theorem directly in terms
of the implicit equations and consider the graphs defined by them in the
product space $\RM^d \times \RM^d$. To do this we first go a little bit further in the study of curvedness for maps.

In the whole section we consider pairs of variables $x_i,y_i$ playing the role of $\tau_i$ and $\omega_i$ together with the filtration which assigns the weight $1$ to $x_i$ and $0$ to $y_i$.
%
%%%%%%%%%%%%%%%%%%%%%%%%%%%%%%%%%%%%%%%%%%%%%%%%%%%%%%%%%%%
\subsection{The R\"u{\ss}mann space}
It is a basic idea in algebraic geometry to look at equations rather than the manifolds defined by these equations. This is what we do now.
So dual to the chain of affine spaces
\[\text{span}_0 (f,p) \subset \text{span}_1(f,p)  \subset \text{span}_2(f,p) \subset \ldots \subset  \text{span}(f,p).\]
of a germ $f:(\RM^m,\a) \to (\RM^d,\b)$ we can consider the chain 
\[ \Rt_0 \supset   \Rt_1 \supset \Rt_2 \supset \ldots \supset \Rt(f,p) \]
consisting of the spaces of affine linear functions vanishing on them:

\begin{definition}
The $k$-th R\"u{\ss}mann space of the germ $f$ is defined as
\[ \Rt_k(f,\a):=\{ \ell \in (\RM^d)^*:  \ell_{|\text{span}_k(f,p)}=0 \} .\]
The R\"u{\ss}mann space is defined as
\[ \Rt(f,\a):=\{\ell \in (\RM^d)^*: \ell_{|\text{span}(f,p)}=0 \} .\]
\end{definition}
\begin{example}
 Let us consider the function ($m=1,d=3$):
 $$f:(\RM,0) \to (\RM^3,0),_ t \mapsto  (t+t^2,t^3,t+t^2+t^3) $$
 We have
 \begin{align*}
  \text{span}_0(f,0)&=\{ 0 \},\\ 
  \text{span}_1(f,0)&=\text{span}_2(f,0)=\text{span}\{ (1,0,1)\},\\
 \text{span}(f,0)& =\text{span}_3(f,0)=\text{span}\{(1,0,1),(0,1,1) \}
 \end{align*}
 The R\"ussmann spaces are ($x,y,z$ are coordinate functions on $\RM^3$):
 \begin{align*}
  \Rt_0(f,0)&=(\RM^3)^*,\\ 
  \Rt_1(f,0)&=\Rt_2(f,0)=\text{span}\{y,x-z\},\\
 \Rt(f,0)& =\Rt_3(f,0)=\text{span}\{x+y-z \}
 \end{align*}
\end{example}

The consideration of the dual space is useful for formulating a
determinantal characterisation.

\begin{definition}
Let $I=(I_1,\dots,I_{s})$ be a sequence of multi-indices in $ \NM_{>0}^m$ and
write $\d^I f$ for the $d \times s$ matrix with $j$-th columns $\d^{I_j}f (p)$.
Let $r+s=d$ and consider linear forms $\ell_1,\ell_2,\ldots,\ell_r$ in the
variables $y_1,\ldots,y_d$. Then we set:
$$\dt^I(f,\ell):=\det (\d^I f,\grad_y(\ell_1),\dots,\grad_y(\ell_r)) .$$
\end{definition}

\begin{proposition}
\label{P::determinant}
Let $\ell_1,\dots,\ell_r \in \Rt(f,\a)$ be elements of the R\"u{\ss}mann space
$\Rt(f,\a)$ and $I=(I_1,I_2,\ldots, I_s)$ a sequence of multi-indices.
If 
$$\dt^I(f,\ell))(\a) \neq 0 $$
then $\ell_1,\dots,\ell_r$ generate $\Rt(f,\a)$ and the torsion index satisfies the estimate
\[t_\a(f) \le \max_j \{|I_j|\} \]
Conversely, if for some elements $\ell_1,\dots,\ell_r \in \Rt(G)$ there exists $I \in \left(\ZM_{>0}^{m}\right)^d$ such that the above determinant is non-zero,
then these generate the R\"u{\ss}mann space.
\end{proposition}
\begin{proof}
 We take a representative of the germ:
 $$f: U \to \RM^d,\ \text{span}(f,\a)=\text{span}(f(U))$$
 The vectors $\d^{I_j}f(\a)$ generate $\text{span}(f,\a)$ and therefore
 the $f$ is curved with torsion index $t_\a(f) \le \max_j \{|I_j|\} $.
\end{proof}
\begin{example}
 With the notations of the above example, we have:
 $$\dt^{1,3}(f,x+y-z)=\begin{vmatrix}
                       1 & 0 & 1 \\ 0 & 6 & 1 \\ 1 &6 &- 1
                      \end{vmatrix}
 =-18 \neq 0$$
\end{example}

The above result has many applications

\begin{corollary}
The torsion index $t_\a(f)$ is upper semi-continuous for curved germs.
\end{corollary}
\begin{proof}
If $\dt^I(f,\ell)(\a) \neq 0$ then, by continuity of the determinant, we may find representative of 
$f:U \to \RM^d$ for which
the determinant is always non-zero.
\end{proof}
 
%%%%%%%%%%%%%%%
\subsection{Implicit formulation}
We consider $C^k$-maps
$$G :\RM^m \times \RM^d \supset U \to \RM^d$$
where $U$ is an open subset and their germs. We assume the zero set:
$$V(G):=\{ (x,y) \in U: G(x,y)=0 \} $$
is the graph of a function $f$. The definition of the span  can be adapted to the implicit case:  

\begin{definition} For a map germ
$G:(\RM^m \times \RM^d,\a) \to (\RM^d,\b)$ we call the span of $G$ at $a$ the affine space
\[\text{Span}(G,\a) =\cap_U \text{Span}(V(G)) \subset \RM^d ,\]
where the intersection runs over all representatives of the germ $G$.
\end{definition}

The dual of the span is the {\em R\"u{\ss}mann space of $G$} denoted by $\Rt(G,\a)$.
Note that we may always take a representative $G:U \to \RM^d$ of $f$ such that
\[\text{Span}(G,\a) =\text{Span}(V(G)) \subset \RM^d \]
and for analytic maps we can be more precise:
\begin{proposition}
\label{P::span}
Consider a real analytic map $G: U  \to \RM^d$ and assume that
the analytic variety $V(G)$ is pathwise connected
then for any point $p \in V(G)$, we have
$$\text{Span}(G,p)=\text{span}(V(G)) .$$
\end{proposition}
\begin{proof}
Take $u \in \Rt(G,p)$ then the analytic function $u \circ G$ vanishes on the germ of $V(G)$ at $p$ and, as $V(G)$ is pathwise connected, it also vanishes everywhere on $V(G)$.
\end{proof}
 
The following proposition shows that, for the HNF iteration, the R\"ussmann spaces at the origin of the frequency manifolds stabilise:
\begin{proposition}
\label{P::filtration} Let  $G=(G_1,\dots,G_d)$ and $G'=(G_1',\dots,G'_d)$ be such that for some $N>0$, we have:
$$G_i'=[G_i]^N $$
for a given filtration such that $w(y_i)<N$ then their R\"u{\ss}mann spaces satisfy
 $$ \Rt(G) \subset \Rt(G') $$
\end{proposition}
\begin{proof}
 To say that an affine function $\ell(y)$ is contained in $\Rt( G) $ means that:
 $$  \ell(y)=\sum_{i=1}^k a_i G_i$$
 and truncating this equation at degree $N$ shows that $\ell(y)$ lies also in $\Rt(G')$.
\end{proof}
%%%%%%%%%%%%%%%%%%%%%%%%%%%%%
\subsection*{Differential expression of curvedness}
Our next observation is that the determinant $\dt^I(f,\ell)$ can be expressed in terms of the function $G$. Differentiating
the equation $G(x,f(x))=0$, we get that:
$$\d_x G(x,f(x))+\d_yG(x,f(x))J_f=0 $$
where $J_f=\d_x f$ is the Jacobian matrix of $f$.  
 So assuming $\d_yG$ to be invertible, we get that:
$$J_f=\left( \d_yG\right)^{-1}\d_x G$$
whenever $G(x,y)=0$. This expresses the first derivatives of the function $f$ in terms of $G$.
 If we continue differentiating, we get the:
\begin{proposition} For any multi-index $I$ there exists a polynomial $P$ such that
$$\d_x^I f=\frac{P_I(\d^{J_1}G,\dots,\d^{J_k}G)}{\det(\d_y G)^{2|I|+1}} $$
with where the $J_i$'s run over all multi-indices with $|J| \leq | I |,$ 
when both sides are evaluated along $G(x,y)=0$.
\end{proposition}
\begin{proof}
We use induction on $|I|$.
Assuming
$$\d^I f=\frac{P_I(\d^{J_1}G,\dots,\d^{J_k}G)}{\det(\d_y G)^{2|I|+1}}{\det(\d_y G)^{2|I|+1}} $$
we get that its derivative with respect to $x_k$ is of the form
$$\d_{x_k}\d^I f=\frac{d_{x_k}Q(\d^{J_1}G,\dots,\d^{J_k}G)}{\det(\d_y G)^{2|I|+1}}{\det(\d_y G)^{2|I|+2}}  $$
for some polynomial $Q$. For each monomial appearing in $Q$, we have
$$d_{x_k} (\d^JG)^K=K_k(\d^JG)^{K-e_k}d_{x_k} (\d^J G)$$
where $e_k=(0,\dots,0,1,0,\dots,0)$. Finally $d_{x_k}$ commutes with $\d^J$ and
$$d_{x_k} G=\d_{x_k} G+\d_{y}G \d_{x} f $$
and as $\d_xf=Jf=-(\d_yG)^{-1}\d_x G$, this shows that $\d_{x_k}\d^I f$ is of the desired form and
proves the proposition.
\end{proof}
The proof of the proposition gives a specific solution that we denote by $d^I$: 
$$d^IG(x,y)=\d_x^I f(x)\text{ when } G(x,y)=0.$$
For instance, in the one-dimensional case one has:
\begin{align*}
d^1 G&=\frac{-G_x}{G_y} ,\\
d^2 G&=\frac{-G_{xx}-G_{xy}f'}{G_y}+\frac{G_x(G_{xy}+G_{yy}f')}{G_y^2} ,\\
   &=-\frac{G_{xx}}{G_y}+2\frac{G_{xy}G_x}{G_y^2}-\frac{G_xG_{yy}}{G_y^3} .\\
\end{align*}

Assuming $\d_yG(p) \neq 0$, the {\em torsion index of $G$ at the point $p$} can be defined either as the torsion of the map $f$ solving
$G(x,f(x))=0$ or using the operators $d^I$, as we may take the implicit version of the $l$-span
\[ \text{Span}_l(G,\a):=\text{Span}\{d^I G(\a): I \in \NM^m,\ |I| \leq l\}\]
The determinant
$$\D^I(G,\ell)= \det (d^I G,\grad_y(\ell_1),\dots,\grad_y(\ell_r))$$
restricts to $\dt^I(f,\ell)$ along $V(G)$ therefore we may reformulate Proposition~\ref{P::determinant} in this context:
\begin{proposition}
\label{P::implicit}
Let $\ell_1,\dots,\ell_r \in \Rt(G,\a)$ be elements of the R\"u{\ss}mann space
$\Rt(G,\a)$ and $I=(I_1,I_2,\ldots, I_s)$ a sequence of multi-indices.
Assume that:
$$\Delta^I(G,\ell))(\a) \neq 0,\ \det \d_yG(\a) \neq 0 $$
then $\ell_1,\dots,\ell_r$ generate $\Rt(G)$ and
\[\tau_\a(G) \le \max_j \{|I_j|\} \]
Conversely, if for some elements $\ell_1,\dots,\ell_r \in \Rt(G)$ there exists $I \in \left(\ZM_{>0}^{m}\right)^d$ such that the above determinant is non-zero then these generate the R\"u{\ss}mann space.
\end{proposition}
 
%We have a natural definition of a (smooth) submanifold $M_K$ of a compact set $K$: it is locally the graph of a Whitney differentiable function or equivalently %the zero locus of a function with non vanishing derivative. By Whitney extension theorem, such a manifold $M_K$ is obtained as the intersection of a submanifold %$M $ in Euclidean space with the compact set $K$.  All our previous statements extend {\em mutatis mutandis} to the case of Whitney differentiability.

%%%%%%%%%%%%%%%%%%%%%%%%%%%%%%%%%%%%%%%%%%%%%%%%%%%%%%

%%%%%%%%%%%%%%%%%%%%%%%%%%%%%%%%%%%%%%%%%
\subsection{Finiteness theorem}
%%%%%%%%%%%%%%%%%%%%%%%%%%%%%%%%%%%
The torsion index is semi-continuous but can we guarantee such a property for a converging sequence? The natural answer would be no.
Indeed the torsion index is given by a non vanishing determinant which is expressed as a partial differential operator $P$. Of course if a sequence $(G_n)$ converges to a limit and $G$ there is a priori no reason that
 $P(G_n) \neq 0$ implies $P(G) \neq 0$. However in our case, the answer will be positive. The reason is that we consider convergence inside the maximal space and not standard convergence so we may consider the statement up to a loss of regularity, more precisely:
\begin{theorem}
Let  $U \to ]0,1]$ be a relative open neighbourhood of the origin inside $ \CM^k$ and $ K $ a pathwise connected compact set. Let 
$(G_n)  \subset \Mt(\Ot^k(U \times K))$
be a sequence converging to a limit\footnote{Reminder: the space $\Ot^k(-)$ is obtained by $C^k$ completion of holomorphic functions and $\Mt(\Ot^k(-))$ denotes the associated maximal space.} 
 $G_\infty$.
Assume that:
\begin{enumerate}
\item $\cap_s U_s=\{ 0 \}$.
 \item $G_\infty(-,y=0)$ is not identically zero.
 \item $\d_y G_\infty$ is invertible at each point.
 \item The R\"ussmann space $\Rt(G_n,0)$ at the origin is independent on $n$.
 \item The sequence $(G_n)$ converges to a limit $G_\infty$ inside the Banach space
 $\Mt(\Ot^k(U \times K))$ for $k=\tau_0(G_n)$.
\end{enumerate}
Then there exists  $s \in [0,1[  $ and a compact neighbourhood of the origin $K' \subset K$ such that
\begin{enumerate}
 \item $\tau_p(G_\infty)=\tau_0(G_n)$ for any $p \in U_s \times K'$.
 \item The manifold $V(G_\infty) \cap (U_s \times K')$ is contained inside $\text{Span}(G_n)$ and more precisely $\text{Span}(G_n)=\text{Span}(G_\infty)$. 
\end{enumerate}
In particular the manifold $V(G_\infty)$ is curved at the origin.
\end{theorem}
 \begin{proof}
\begin{lemma}
Assume that there is a partial
 differential operator $P$ of order $k$ such that $P(G_0)(0) \neq 0$. Then there exists $s>0$ and a compact neighbourhood of the origin $K' \subset K$ such that neither $P(G_n)$ nor $P(G_\infty)$ vanishes inside $U_s \subset K'$ for any $n$.
  \end{lemma}
\begin{proof}
By continuity of the map $P(G_0)$,
 there exists a compact neighbourhood of the origin $K' \subset K$ such that for $s$ small enough, we have
$$m=\inf_{p \in U_s \times K'} |P(G_0)(p)|>0 $$
Put:
$$ R=\max_{n} | G_{n}-G_0|$$
As $P$ is of order $k$, the mean value theorem implies the existence of a constant $M$ such that: 
$$| P(G_n)(z)-P(G_0)(z)|_s \leq  M\sup_{z \in K_s} |G_n(z)-G_0(z)| \leq Ms|G_n-G_0| \leq RMs$$
Note that in the left hand side of the first inequality, we consider a $C^k$ norm while in the right hand side it  is the $C^0$ norm. In the next inequality, the definition the $\Mt^c$-norm gives an $s$ factor.

 We deduce that:
 $$|P(G_n)(z)|\geq m-RM s  \xrightarrow[s \to 0]{} m $$
In particular for any
 $$s< \frac{m}{RM} $$
 we have 
 $$\inf_{z \in K_s} | P(G_n)(z) |>0.$$
This proves the lemma.
\end{proof}
We may now prove the first part of theorem.
Choose a basis $\ell_1,\dots,\ell_k \in \Rt(G_0,0) $ and an index $I$ such that 
$$\D^I(G_0,\ell)(0) \neq 0$$
and $\tau_0(G_0)=|I|$. By Proposition~\ref{P::connected}, we may assume up to further shrinking that $V(G_n)\cap (U_s \times K')$ is pathwise connected. Applying the lemma with $P=\D^I(,-,\ell)$ we get that, we have $\tau_p(G_n) \leq \tau_0(G_n)=\tau_0(G_0)$ for $p \in V(G_n)\cap (U_s \times K') $.

 We now show that:
\[ V(G_{\infty}) \subset \text{Span}(G_0,p)\]
As $V(G_0)$ is pathwise connected, we also have (Proposition~\ref{P::span}):
$$\text{Span}(G_0,p)=\text{Span}{(V(G_0))}$$ 
Take a point $p_0=(x_0,y_0) \notin \text{Span}(G_0,p)$ with $y_0 \in K$.
We have 
$$\text{Span}(G_0,p)=\text{Span}(G_n,p)$$
therefore none of the points $(x,y_0)$ lie in $V(G_n)$ , thus the sequence of holomorphic
maps:
   $$x \mapsto G_n(x,y_0) $$
   is everywhere non-zero. Therefore, by {\em Hurwitz theorem} (see e.g. \cite[Chapter 5, Theorem 2]{Ahlfors}),   its limit is also
non-zero. This shows that $p_0 \notin V(G_\infty)$ and hence we get the inclusion
$V(G_\infty,p) \subset \text{Span}(G_n,p)$. As the span is the smallest affine
space containing a given set, this implies the inclusion
\[ \text{Span}(G_\infty,p) \subset \text{Span}(G_n,p).\]
We have
$$   \D^I(G_\infty,\ell)(p) =\lim_{n \to +\infty}\D^I(G_n,\ell)(p) \geq \inf_{n \in \NM}    \D^I(G_n,\ell)(p)>0,$$
for $|p| \leq \dt$, where $\ell_1,\dots,\ell_k$ generate the R\"u{\ss}mann space of $G_n$ at $p$. 
As $ \D^I(G_\infty,\ell)(p) \neq 0 $ we deduce that $\tau(G_\infty,p) \leq \tau(G_n,p)$ and that $\Rt(G_\infty,p) \subset \Rt(G_n,p)$, which gives the converse
inclusion:
\[  \text{Span}(G_n,p) \subset \text{Span}(G_\infty,p)  .\]
This concludes the proof of the theorem.
\end{proof}
  From this theorem, we immediately deduce that the Hamiltonian normal form is curved at the origin.
\section{Solution to the Hermann conjecture}
\label{SolutionHermanConjecture}
We describe now an application of our theorem to the analysis of 
invariant tori near elliptic critical points of analytic Hamiltonians,
whose frequency satisfies a Bruno condition. For this we have to consider
the appropriate real form of $H$ and restrict to the real domain.

\subsection{Hyperbolic and Elliptic fixed points}
The dynamics of the harmonic oscillator
\[H_e=\frac{1}{2}\sum_{i=1}^d \beta_i(p_i^2+q_i^2)\]
describes quasi-periodic motions with frequency vector $\beta$. All orbits are bounded and the phase space is filled out by a $d$-parameter family
of invariant tori $p_i^2+q_i^2=t_i$, on which the solutions spiral around. 
The geometry of the situation is well-known: the fibres of the map
\[ \RM^{2d} \to \RM_{> 0}^d,\;\;\;(q,p) \mapsto p^2+q^2:=(p_1^2+q_1^2,\ldots,p_d^2+q_d^2)\]
are tori, which are of real dimension $d$ over the strictly positive orthant $\RM_{>0}^d$.  

In the real domain there is a big difference in the dynamical behaviour between
$H_e$ and  its hyperbolic cousin
\[H_h=\sum_{i=1}^d \alpha_i p_i q_i,\]
for which all orbits are unbounded and there exist no invariant tori.

Yet when considered over $\CM$, the canonical coordinate transformation $\phi$
\[ p_j \mapsto \frac{1}{\sqrt{2}}(p_j+i q_j),\;\;\;q_j \mapsto \frac{1}{\sqrt{2}}(q_j+ip_j)\]
maps $H_h$ to $H_e$, when we put
\[\beta =  i \alpha .\]
Another way of expressing the relation between $H_h$ and $H_e$ is by saying 
that the evolution for
$H_h$ in purely imaginary time is equivalent to the real time evolution of 
$H_e$ and vice versa. As a consequence of this relation, we can immediately translate results about $H_h$ into results about $H_e$.
%%%%%%%%%%%%%%%%%%%%%
\subsection{Real Hamiltonian normal form}
Consider a real analytic Hamiltonian of the form
\[ H=\frac12 \sum_{i=1}^d \alpha_i (p_i^2+q_i^2) +O(3) \in \RM\{p,q\}.\]
and assume that the frequency vector $\alpha \in \RM^d(a)$, where $a \in \Bt^-$ is summable.

By Theorem~\ref{T::Lagrange}, so for appropriate choices
of the sequence $\rho$ and radius $s_0$, we find sets
\[ W_0 = (Z_0)_{s_0}  \times D_{s_0}^{3d}\]
and 
\[ W_{\infty}=(Z_{\infty})_{s_\infty} \times D_{s_{\infty}}^{3d},\]
such that the iterates of the Hamiltonian normal form
\[A_0,\;\; A_1,\;\;A_2,\ldots\]
converges in the Banach  space $\Ot^c(W_\infty)$ to an element $A_{\infty}$
and  the sequence 
\[ \Phi_0=e^{-v_0},\;\;\Phi_1=e^{-v_1}e^{-v_0},\;\ldots, \Phi_n=\prod_{i=0}^n e^{-v_k},\] 
converges in the operator norm to
$\Phi_{\infty} \in L(\Ot^c(W_0),\Ot^c(W_\infty))$.
This transformation maps, for any $k$, the subspace $\Ot^k(W_0)$ to $\Ot^k(W_\infty)$. In particular,
if the set $W_0$ is chosen inside the holomorphy domain of $A_0$, then $A_0$ is also $C^\infty$ on $W_0$ and
therefore belongs to $\Ot^k(W_0)$ for any $k$. The function $A_\infty$ is then  $C^\infty$ on $W_{\infty}$ and for fixed
$\omega \in Z_{\infty,s_\infty}$ it is holomorphic.

Of course, in a sense we get a 'half-way theorem', as we start with a real 
Hamiltonian, but obtain a statement about its behaviour in the complexified 
domain. But it is clear from the explicit form of the description of the 
iteration that, starting from a real Hamiltonian, the algorithm produces 
{\em real} vector fields $v_n$, which exponentiate to {\em real} analytic 
coordinate transformations $\varphi_n=e^{-v_n}$, etc. As a consequence
the limit transformation $\Phi_{\infty}$ is 'real'. Furthermore, we remark
that it follows from the construction of the vector fields $v_n$ that the
transformation $\varphi_n$ maps the subspace $F_n^c=\Ot^c(V_n)$, ($V_n=W_n \times D_n$) to $F_{n+1}^c$, so that $\Phi_{\infty}$ maps $F_0^c$ to $F_{\infty}^c$
and by the regularity property $F_0^k$ to $F_{\infty}^k$.

So the logic of our argument is the following: if the frequency vector is real $\a \in \RM^d(a)$, we 
consider the sequences 
$$\nu_k=(2^{-(k+1)d^2\tau_0}),\ a'=\nu a.$$
Here $\tau_0$ is the torsion of the formal frequency map (the gradient of the Birkhoff normal form).
In the previous section we show that the frequency map $G_\infty$ 
defines a manifold
$$X_\infty:=\{ (\omega,\tau) \in U(\nu a) \;|\;\;G_{\infty,1}(\omega, \tau)=\dots=G_{\infty,d}(\omega,\tau)=0 \}  $$
contained inside its span at the origin and therefore
has density one at the origin by arithmetic density. We will now see how this positive measure set parametrise invariant tori.

By Whitney extension theorem, we may solve the implicit equations defining $X_\infty$ just like for the formal case. This defines the {\em frequency map} of our Hamiltonian system
\[ \b:\Ut \to \RM^d, \;\;\tau \mapsto \a+f_\infty(\tau).\]
The Taylor series of this map at the origin 
at order $k$ coincides with that of the formal frequency map $b$.
Of course, the process of extension is not unique, but the restriction 
to the preimage of $\RM^d(a)$ is the same for any choice.
The construction can done so that the frequency map is a curved map (just corestrict the map to the span before taking the Whitney extension).

 %%%%%%%%%%%%%%%%%%%%%
\subsection{The coordinate transformation}
We now translate geometrically our result.   The coordinate functions $\omega_i, \tau_i, q_i,p_i$ can be considered as elements of
the space $E^c_0 \subset \Ot^c(W_0)$ and we write
\[\omega'_i=\Phi_\infty(\omega_i),\;\;\tau'_i=\Phi_\infty(\tau_i),\;\;q'_i=\Phi_\infty(q_i),\;\;p'_i=\Phi_\infty(p_i) .\] 
Note that $\tau'=\tau$ and 
\[\omega'_i \in F_{\infty}^k \subset \Ot^k(V_{\infty}),\;\;\;\textup{for all}\;\; k \in \NM, \] 
so is independent of $q,p$. These functions define a $C^\infty$-map
\[\phi':W_{\infty} \lra W_0,\;\;x \mapsto (\omega'(x),\tau'(x),q'(x),p'(x)),\ x=(\omega,\tau,q,p)\]
and for each $g \in \Ot^c(W_0)$ we have the relation
\[g(\phi'(x))=\Phi_\infty(g)(x).\]
The reality of $\Phi_\infty$ implies that the map $\phi'$ maps the
real part 
$$\Wt_{\infty}:=W_\infty \cap \RM^d\textup{ to } \Wt_{0}:=W_0 \cap \RM^d.$$ 
Thus  we obtain a real $C^\infty$-map 
\[\varphi': \Wt_{\infty} \to \Wt_0.\]
As the map $\p'$ sends the $(\omega,\tau)$-space to itself, it is fibred over the $(\omega,\tau)$-space and we obtain a
commutative diagram:
$$\xymatrix{ \Wt_\infty \ar[r]^{\p'} \ar[d] & \Wt_0 \ar[d]\\
\Vt_\infty \ar[r]^{\psi'} & \Vt_0
}$$
with $\psi'=(\omega',\tau')$ and vertical maps in the diagram forget the 
coordinates $q,p$.

By Whitney extension theorem the component functions $\omega',\tau', q', p'$ of
$\varphi'$ define  $C^\infty$-maps 
\[\psi:\RM^{2d} \to \RM^{2d},\;\;\; \p:\RM^{4d} \to \RM^{4d}.\]
We restrict $\psi$ and $\p$ to the preimages
\[ \Vt_e:=\psi^{-1}(\Vt_0) \supset \Vt_{\infty},\;\;\;\Wt_e:=\p^{-1}(\Wt_0) \supset \Wt_{\infty},\]
and we arrive at a diagram
$$\xymatrix{ \Wt_e \ar[r]^{\p} \ar[d]& \Wt_0 \ar[d] \\
\Vt_e \ar[r]^{\psi} & \Vt_0 }$$
that extends the previous one. Obviously, the maps $\varphi$ and $\psi$
are not unique, but its restrictions to $\Wt_{\infty}$ and $\Vt_{\infty}$ are.
 
As the Taylor series of $\varphi$ is given 
by the series $\Phi_{\infty}$, which is $Id+O(2)$, $\varphi$ is a diffeomorphism near the origin.
Consequently, by restriction to smaller polydiscs, we may and will assume that
\begin{enumerate}
 \item $\psi$ is a diffeomorphism between $\Vt_e $ and $\Vt_0$,
 \item $\p$ is a diffeomorphism between $\Wt_e $ and $\Wt_0$.
\end{enumerate}

As the map $\varphi$ arose from the transformation $\Phi_{\infty}$, it has the
property that after restriction to $\Wt_\infty$, it transforms
\[F_0=H+\sum_{i=1}^d \omega_i (p_i^2+q_i^2)\]
to $F_{\infty}=A_{\infty}$. Furthermore
\[A_{\infty}=A_0+T_{\infty},\;\;\; T_{\infty} \in (R_0+I^2) \cap \Ot^k(W_{\infty}).\]
This means that for $\omega \in Z_{\infty}$ one has
\[F_0 \circ \varphi(\omega,\tau,q,p)=A_{\infty}(\omega,\tau, q,p), \]
and moreover, for such a value, the map $\varphi(\omega,-)$ is an analytic Poisson morphism in the variables $(\tau,q,p)$.

%%%%%%%%%%%%%%%%%%%%%%%%%%%%%%%%%%%%%%%%%%%%%%%%%%%%%%%%%%%
 
\subsection{The elliptic normal form Theorem}
Using the coordinate transformation and the frequency map, we may
sum up our results in the following way:

\begin{theorem}
\label{T::elliptic} 
Let $a=(a_n)$ be a sequence satisfying the Bruno condition and  $\alpha \in \RM(a)$.
Let $H \in \RM\{q,p\}$ be a real analytic function with an elliptic fixed point:
\[ H=\frac{1}{2}\sum_{i=1}^d\a_i (p_i^2+q_i^2)+O(3). \]
Then there exists  an open neighbourhood of the origin $U \subset \CM^d$, $V \subset \CM^{2d}$ with real
parts $\Ut,\Vt$ a curved $C^\infty$-map
\[ \b: \Ut \to  \RM^d,\]
and $C^\infty$-maps:
$$
\xymatrix{\Ut \times \Vt \ar[rr]^-\Psi \ar[dr] &  & \Ut \times \RM^{2d} \ar[dl] \\
 &\Ut  &}
$$
such that for any $\tau \in \b^{-1}(\RM^{2d}(a))$, one has:
\begin{enumerate}[{\rm i)}]
\item The Taylor series expansion of $\b$ at the origin is equal to $\del B(H)$.
\item The map $\Psi$ is a fibred diffeomorphism over its image.
\item The map $\Psi(\tau,-)$ is an analytic symplectomorphism.
\item $H \circ \Psi(\tau,q,p)= \frac{1}{2}\sum_{i=1}^n \b_i(\tau) (p_i^2+q_i^2) +T_{\infty}(\tau,q,p) $
\item $T_{\infty}(\tau,-) \in I^2+\CM$, where $I \subset \Ot^c(V)$ 
is the ideal generated by the $p_i^2+q_i^2-\tau_i$'s.
\end{enumerate}
\end{theorem}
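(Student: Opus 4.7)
The plan is to assemble the theorem from the constructions of $\S 4.1$--$\S 4.4$, which have already done essentially all the analytic work; what remains is bookkeeping. I would first apply the canonical transformation $\phi$ of $\S 4.1$ in reverse to convert the real elliptic Hamiltonian $H$ into a hyperbolic Hamiltonian $H_h$, to which Theorem~\ref{T::Lagrange} applies: the Bruno condition $\alpha\in\Rt(a)_\infty$ transfers to the corresponding complex arithmetic condition on the (purely imaginary) hyperbolic frequency. The theorem yields the Poisson morphism $\Phi_\infty\in L(E_0^c,E_\infty^c)$ sending $F_0=A_0+H_h$ to $A_\infty=A_0+T_\infty$ with $T_\infty\in(R_0+I^2)\cap M$, and by Proposition~\ref{P::regularity} the iteration converges in each $E_n^k$ as well.

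Next, as in $\S 4.2$, I would invoke Fefferman's extension theorem to produce a global $C^\infty$ diffeomorphism $\varphi:\Wt_e\to\Wt_0$ together with its fibred partner $\psi:\Vt_e\to\Vt_0$, extending the real restriction of $\Phi_\infty$ acting on the coordinates $\omega,\tau,q,p$. For the frequency map I would follow $\S 4.3$--$\S 4.4$: Proposition~\ref{P::non_degeneracy} forces $X_\infty\subset F(H)\times\CM^d$, so Whitney extension of the restrictions of $R_{\infty,i}=\Phi_\infty(\omega_i)$ to $V_\infty\cap(F(H)\times\CM^d)$ followed by the implicit function theorem yields $b:U\to\alpha+F(H)$, whose real part is the desired $\beta:\Ut\to\alpha+\Ft(H)$. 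Property~(i) is the comparison spelled out in $\S 2.5$ between the $\omega_{n,i}(\tau)$ produced by the iteration and the partial derivatives $\partial_{\tau_i}B(\tau)$ of the Birkhoff normal form.

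Finally, conjugating $\varphi$ back through $\phi$ and restricting to $\Ut\times\Vt$ yields the desired fibred map $\Psi$. Since $\Psi=\Id+O(2)$ at the origin, after shrinking the neighbourhoods it is a fibred diffeomorphism onto its image, giving (ii). For $\tau\in\beta^{-1}(\Rt(a)_\infty)$, the corresponding $\omega=\beta(\tau)-\alpha$ lies in $Z_\infty$, and the very definition of $E_\infty^c$ ensures that $\Phi_\infty$ on this $\omega$-slice is analytic and Poisson in $(q,p)$; pushing this through $\phi$ gives (iii). Properties (iv) and (v) are obtained by reading off $F_\infty=A_\infty=A_0+T_\infty$ on the slice $\omega=\beta(\tau)-\alpha$: the unfolding $A_0=\sum_i(\alpha_i+\omega_i)p_iq_i$ becomes $\tfrac12\sum_i\beta_i(\tau)(p_i^2+q_i^2)$ after the elliptic change of variables, and $T_\infty\in(R_0+I^2)\cap M$ becomes an element of $\CM+I^2$ for the ideal $I$ generated by $\tfrac12(p_i^2+q_i^2)-\tau_i$. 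The main obstacle I anticipate is retaining the $\tau$-fibred analyticity of $\Psi(\tau,-)$ after the two non-analytic Fefferman/Whitney extensions, and this is precisely where Proposition~\ref{P::non_degeneracy} is indispensable: by forcing $X_\infty\subset F(H)\times\CM^d$ it allows the extension $\beta$ to land inside $\alpha+\Ft(H)$, so the non-degeneracy condition needed for the applications to invariant tori survives the extension process intact.
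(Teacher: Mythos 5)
Your proposal follows the paper's own route: apply Theorem~\ref{T::Lagrange} (via the elliptic--hyperbolic dictionary of \S 4.1), extract real $C^\infty$ extensions of the coordinate transformation, build the frequency map from the $R_{\infty,i}=\Phi_\infty(\omega_i)$, use Proposition~\ref{P::non_degeneracy} to land in $\a+F(H)$, and read off (i)--(v) from $F_\infty=A_0+T_\infty$ on the slices $\omega=\b(\tau)-\a$. Two small inaccuracies are worth flagging. First, you have interchanged the roles of the two extension theorems: the paper uses the classical Whitney $C^\infty$ extension for the coordinate functions $\omega',\tau',q',p'$ (a one-shot extension at $n=\infty$), and invokes Fefferman's \emph{bounded linear} $C^k$ extension operator for the sequence $R'_{n,i}$ precisely because linearity and boundedness are what let one pass to the limit $r_\infty=\lim_n r_n$ and run the implicit function theorem uniformly; a non-linear Whitney extension of each $R_{n,i}$ separately would not control the limit. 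Second, your closing paragraph attributes the preservation of fibrewise analyticity to Proposition~\ref{P::non_degeneracy}; that proposition secures the \emph{non-degeneracy} of $\b$ (its image spans $\a+\Ft(H)$), whereas fibrewise analyticity of $\Psi(\tau,-)$ for $\tau\in\b^{-1}(\Rt(a)_\infty)$ comes directly from the fact that the $C^\infty$ extensions agree with the original $E_\infty^c$-functions on $W_\infty$, which are by construction holomorphic in $(\tau,q,p)$ for each fixed $\omega\in Z_\infty$ --- you in fact said this correctly earlier when establishing (iii). Neither point is a gap in the argument; they are misattributions of which lemma carries which load.
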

The map $\Psi$ of the theorem is defined in terms of the map $\varphi$ and the 
map $\tau \mapsto \omega(\tau)$ of the previous section by the relation
$$\Psi(\tau,q,p)=\p(\omega(\tau),\tau,q,p) .$$
 
 We note that in the extremal case where $X_\infty=\{(0,\a)\}$, the frequency map
$\b$ is constant, and the condition $\b(\tau) \in \RM^d(a)$ is always
satisfied. In this case the map $\Psi$  is therefore analytic, because $\p$ is analytic in the $\tau$-variables.
So the theorem implies that $H$ is integrable, and thus we recover the classical result of R\"u{\ss}mann stated before~\cite{Russmann}.

 In the general case,  our iteration produces a $C^\infty$ function $\b$, whose Taylor 
expansion at the origin is the formal frequency map given by the Birkhoff normal form. In a similar way, our construction shows that 
the sequence $(h_n)$ of \ref{SS::relation} converges to a limit $h_\infty$.
This limit function being the constant term in the expression
$$T_\infty(\tau,q,p)= h_\infty(\tau)+\sum_{i,j}t_{ij}(q,p)f_if_j$$ 
with $\tau \in \b^{-1}(\RM^d(a))$.
 
\subsection{A big set of invariant tori}
A direct corollary of the elliptic normal form theorem is the following.
\begin{corollary}
For $\tau \in \b^{-1}(\RM^d(a))$, the image under $\Psi(\tau,-)$ of the torus 
$$T_{\tau}: p_1^2+q_1^2=\tau_1,\dots,p_1^2+q_1^2=\tau_n$$ 
is invariant under the Hamiltonian flow of $H$. 
The motion on this torus is quasi-periodic with frequency $\b(\tau)$.  
\end{corollary}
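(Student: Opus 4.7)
The plan is to read off the dynamics from the normal form of Theorem~\ref{T::elliptic} and transport it back by the symplectomorphism $\Psi(\tau,-)$. Fix $\tau \in \b^{-1}(\Rt(a)_\infty)$ and set
\[K_\tau(q,p) := \frac12 \sum_{i=1}^d \b_i(\tau)(p_i^2+q_i^2), \qquad f_i(q,p) := p_i^2+q_i^2-\tau_i,\]
so that $T_\tau = \{f_1=\cdots=f_d=0\}$ and item (iv) of Theorem~\ref{T::elliptic} reads $H\circ \Psi(\tau,-) = K_\tau + T_\infty(\tau,-)$.

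\textbf{Step 1: the quadratic part rotates the torus.} A direct Poisson bracket computation gives $\{f_i,K_\tau\}=0$ for every $i$, so the Hamiltonian vector field $X_{K_\tau}$ is tangent to $T_\tau$. On $T_\tau$, parametrising each factor by an angle $\theta_i$ via $q_i=\sqrt{\tau_i}\cos\theta_i$, $p_i=\sqrt{\tau_i}\sin\theta_i$, the equations $\dot q_i = \b_i(\tau) p_i$, $\dot p_i = -\b_i(\tau) q_i$ reduce to $\dot\theta_i = -\b_i(\tau)$, i.e.\ a linear flow on the standard torus with frequency vector $\b(\tau)$.

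\textbf{Step 2: the remainder contributes nothing on $T_\tau$.} By item (v), we may write
\[T_\infty(\tau,q,p) = h_\infty(\tau) + \sum_{i,j} t_{ij}(\tau,q,p)\, f_i(q,p)\, f_j(q,p).\]
The constant $h_\infty(\tau)$ has zero Hamiltonian vector field. For any test function $g$,
\[\{g,t_{ij}f_if_j\} = \{g,t_{ij}\}f_if_j + t_{ij}\bigl(\{g,f_i\}f_j + f_i\{g,f_j\}\bigr),\]
and every term on the right contains a factor $f_i$ or $f_j$. Since $f_i|_{T_\tau}=0$ and all other factors are bounded (by analyticity of $\Psi(\tau,-)$), the expression vanishes on $T_\tau$. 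Hence $X_{T_\infty(\tau,-)}$ vanishes identically on $T_\tau$.

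\textbf{Step 3: transport back by $\Psi(\tau,-)$.} Combining Steps 1 and 2, the Hamiltonian vector field of $H\circ \Psi(\tau,-)$ coincides, on $T_\tau$, with $X_{K_\tau}$, which is tangent to $T_\tau$ and generates the linear flow with frequency $\b(\tau)$. By item (iii), $\Psi(\tau,-)$ is an analytic symplectomorphism, so it intertwines the Hamiltonian flow of $H\circ \Psi(\tau,-)$ with that of $H$. Therefore $\Psi(\tau,T_\tau)$ is invariant under the flow of $H$, and the induced motion is conjugate, via the diffeomorphism $\Psi(\tau,-)\big|_{T_\tau}$, to the linear flow on $T_\tau$ with frequency $\b(\tau)$, proving the corollary.

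The only mildly subtle point is Step 2: one must use the $I^2$ structure (not just $I$) so that both the factors and their Poisson brackets with arbitrary functions still carry a vanishing $f_i$. Everything else is routine, as the hard work has already been done in the construction of $\Psi$ and in establishing properties (i)--(v) of Theorem~\ref{T::elliptic}.
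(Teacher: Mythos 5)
Your proof is correct and follows exactly the route the paper intends when it calls this ``a direct corollary'': the quadratic part generates the linear flow with frequency $\b(\tau)$, the remainder $T_\infty(\tau,-)\in I^2+\CM$ has Hamiltonian vector field vanishing on $T_\tau$ by the Leibniz argument (the very observation the paper makes in \S1.2 about $I^2$ being ``dynamically trivial''), and the analytic symplectomorphism $\Psi(\tau,-)$ from item (iii) transports the conclusion to $H$. Nothing to add; the subtlety you flag about needing $I^2$ rather than $I$ is precisely the point the authors emphasise.
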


So we get a collection of invariant tori in our Hamiltonian system, 
parametrised by the {\em inverse image} of $\RM^d(a)$ by the frequency map 
\[ \b: \Ut \to  \RM^d(a) \]
which is curved at the origin.

We will now see how these tori fit together in a neighbourhood of the
origin of our original Hamiltonian $H(p,q)$. The map 
$$\Psi: \Ut \times \Vt \to \Ut \times \RM^{2d}$$
of the previous theorem has an inverse $\Gamma=\Psi^{-1}$ over a sufficiently small neighbourhood 
of the origin of the form $\Ut \times \Bt$, $\Bt \subset \RM^{2d}$:
\[\Gamma: \Ut \times \Bt \to \Ut \times \Vt; (\tau,p,q) \mapsto (\tau, P(\tau,p,q),Q(\tau,p,q))\]
So we have the relation
\[H(p,q) =A_0(\omega(\tau),P(\tau,p,q),Q(\tau,p,q)) +T_{\infty}\circ \Gamma(\tau,p,q).\]

We can, in principle, eliminate the variables $\tau_1,\tau_2,\ldots,\tau_d$ 
from the right hand side by solving the implicit equations
\[ P_i(\tau,p,q)^2+Q_i(\tau,p,q)^2=\tau_i,\;\;i=1,2,\ldots,d,\]
which produces a map
\[ T: \Bt \lra \Ut;\;\; (p,q) \mapsto (\tau_1(p,q),\ldots,\tau_d(p,q)).\]
As one has 
\[\tau_i(p,q)=p_i^2+q_i^2+O(3),\]
the map $T$ is generically a submersion. In fact, it is a submersion on $\Bt \setminus C$, $C:=T^{-1}(\Delta)$, where $\Delta \subset \Ut$
is the set of {\em critical values} of $T$.

One now obtains a diagram 
\[
\xymatrix{
\Bt \ar[d]_-T\ar[r]^-\g &  \RM^d \times \Vt\ar[d]^{\pi}\\
\Ut \ar[r]^{\beta} &\RM^d \\
S\ar@{^{(}->}[u] \ar[r]&    \RM^d(a) \ar@{^{(}->}[u]\\
}
\]
related to our normal forms as follows.

On the right hand side we have the standard Hamiltonian
\[A_0=\frac{1}{2}\sum_{i=1}^d (\a_i+\omega_i)(p_i^2+q_i^2),\]
defined on $ \RM^d \times \Vt$, where the map 
$$\pi:(\omega,q,p) \mapsto \a+\omega$$
gives the frequency of motion.

On the left hand side we have a neighbourhood $\Bt$, on which the
original Hamiltonian $H(p,q)$ is defined. The vertical map on the left 
is the $\tau$-map $T(p,q)$ defined above.

 The horizontal map $\gamma$
stems from coordinate transformation $\Gamma$:
\begin{align*}
 \gamma:\Bt &\to  \RM^d \times \Vt\\
  (q,p)& \mapsto (\beta(T(p,q)),P(T(p,q),p,q),Q(T(p,q),p,q)) 
\end{align*}

The horizontal map in the middle is the frequency map $\b$, which is curved. The inverse image 
$S:=\b^{-1}(\RM^d(a))$ under $\b$ parametrises invariant tori for
$H$ in the neighbourhood $\Bt$. As $T$ is a submersion outside $\Delta$, 
which by Sard's theorem has measure zero, the set
$T^{-1}(S \setminus \Delta) \subset \Bt$ yields a set of positive measure 
consisting of invariant tori in the neighbourhood $\Bt$ of the elliptic critical 
point, as conjectured by Herman~\cite{Herman_ICM}.
 
     \bibliographystyle{amsplain}
\bibliography{master}

\providecommand{\bysame}{\leavevmode\hbox to3em{\hrulefill}\thinspace}
\providecommand{\MR}{\relax\ifhmode\unskip\space\fi MR }
% \MRhref is called by the amsart/book/proc definition of \MR.
\providecommand{\MRhref}[2]{%
  \href{http://www.ams.org/mathscinet-getitem?mr=#1}{#2}
}
\providecommand{\href}[2]{#2}
\begin{thebibliography}{10}

\bibitem{Ahlfors}
L.V. Ahlfors, \emph{{Complex analysis. An introduction to the theory of
  analytic functions of one complex variable}}, International Series in Pure
  and Applied Mathematics., McGraw-Hill, 1953.

\bibitem{Arnold_KAM}
V.I. Arnold, \emph{{ Proof of a theorem of A. N. Kolmogorov on the preservation
  of conditionally periodic motions under a small perturbation of the
  Hamiltonian}}, Uspehi Mat. Nauk \textbf{18} (1963), no.~5, 13--40, English
  translation: Russian Math. Surveys.

\bibitem{Arnold_matrices}
\bysame, \emph{On matrices depending on parameters}, Uspehi Mat. Nauk
  \textbf{26} (1971), no.~2(158), 101--114, English Translation: Russian Math.
  Surveys, 26 (1971), 2, 29-43.

\bibitem{Broer_Huitema_Sevryuk_families}
H.W. Broer, G.B. Huitema, and M.B. Sevryuk, \emph{Families of quasi-periodic
  motions in dynamical systems depending on parameters}, Nonlinear Dynamical
  Systems and Chaos, Springer, 1996, pp.~171--211.

\bibitem{Broer_Huitema_Sevryuk_book}
\bysame, \emph{Quasi-periodic motions in families of dynamical systems: order
  amidst chaos}, Springer, 2009.

\bibitem{Broer_Huitema_Takens}
H.W. Broer, G.B Huitema, F.~Takens, and B.L.J. Braaksma, \emph{Unfoldings and
  bifurcations of quasi-periodic tori}, no. 421, American Mathematical Soc.,
  1990.

\bibitem{Bruno}
A.D. Bruno, \emph{{Analytic form of differential equations I}}, Trans. Moscow
  Math. Soc. \textbf{25} (1971), 131--288.

\bibitem{Demailly_livre}
J.-P. Demailly, \emph{{Complex analytic and differential geometry}},
  https://www-fourier.ujf-grenoble.fr/~demailly/manuscripts/agbook.pdf, 2012.

\bibitem{eliasson2015around}
H~Eliasson, B.~Fayad, and R.~Krikorian, \emph{{Around the stability of KAM
  tori}}, Duke Mathematical Journal \textbf{164} (2015), no.~9, 1733--1775.

\bibitem{groupes}
J.~F\'ejoz and M.~Garay, \emph{Un th\'eor\`eme sur les actions de groupes de
  dimension infinie}, Comptes Rendus \`a l'Acad\'emie des Sciences \textbf{348}
  (2010), no.~7-8, 427--430.

\bibitem{Oberwolfach_2009}
M.~Garay, \emph{{General KAM theorems}}, vol.~6, Oberwolfach reports, no.~3,
  European Mathematical Society, 2009, pp.~2443--2446.

\bibitem{Herman}
\bysame, \emph{{The Herman conjecture}}, ArXiv: 1206.1245, 2012.

\bibitem{Oberwolfach_2012}
\bysame, \emph{{The Herman conjecture}}, Oberwolfach reports, European
  Mathematical Society, 2012.

\bibitem{Abstract_KAM}
\bysame, \emph{{An Abstract KAM Theorem}}, Moscow Mathematical Journal
  \textbf{14} (2014), no.~4, 745--772.

\bibitem{Lagrange_KAM}
\bysame, \emph{{Degenerations of invariant Lagrangian manifolds}}, Journal of
  Singularities \textbf{8} (2014), 50--67.

\bibitem{CRAS_KAM}
\bysame, \emph{{Sur le th\'eor\`eme KAM}}, preprint arXiv:1405.3471, 2014.

\bibitem{arithmetic}
\bysame, \emph{{Arithmetic Density}}, Proceedings of the Edinburgh Mathematical
  Society \textbf{59} (2016), no.~3, 691--700.

\bibitem{KAM_theory_I}
M.~Garay and D.~van Straten, \emph{{KAM theory, part I. Group actions and the
  KAM problem}}, ArXiv 1805.11859, 2018.

\bibitem{KAM_theory_II}
\bysame, \emph{{KAM theory, part II. Kolmogorov spaces}}, ArXiv 1809.03492,
  2018.

\bibitem{KAM_theory_III}
\bysame, \emph{{KAM theory, part III. Applications}}, ArXiv 1810.09423, 2018.

\bibitem{Functors}
\bysame, \emph{{A category of Banach space functors}}, ArXiv: 2010.02320, 2020.

\bibitem{Grothendieck_EVT}
A.~Grothendieck, \emph{Espaces vectoriels topologiques}, Instituto de
  Matem\`atica Pura e Aplicada, Universidade de S\~ao Paulo, 1954, 240 pp.,
  English Translation: Topological vector spaces,\ Gordon and Breach, 1973.

\bibitem{Herman_ICM}
M.R. Herman, \emph{Some open problems in dynamical systems}, Proceedings of the
  International Congress of Mathematicians, Doc. Math., vol.~II, 1998,
  pp.~797--808.

\bibitem{Kleinbock}
D.~Kleinbock, \emph{Extremal subspaces and their submanifolds}, Geom. Funct.
  Anal \textbf{13} (2003), no.~2, 437--466.

\bibitem{Kleinbock_Margulis}
D.Y. Kleinbock and G.A. Margulis, \emph{{ Flows on homogeneous spaces and
  Diophantine approximation on manifolds}}, Ann. of Math. \textbf{148} (1998),
  339--360.

\bibitem{Kolmogorov_KAM}
A.N. Kolmogorov, \emph{{On the conservation of quasi-periodic motions for a
  small perturbation of the Hamiltonian function}}, Dokl. Akad. Nauk SSSR
  \textbf{98} (1954), 527--530, (In Russian).

\bibitem{Lagrange_tenseur}
R.~Lagrange, \emph{{Calcul diff\'erentiel absolu}}, M\'emorial des sciences
  math\'ematiques, no.~19, Gauthier-Villars, 1926.

\bibitem{Margulis}
G.A. Margulis, \emph{On the action of unipotent groups in the space of
  lattices}, Proc. of the Summer School on group representations, Bolyai Janos
  Math. Soc., Budapest, 1971, pp.~365--371.

\bibitem{Moser_Pisa_1}
J.~Moser, \emph{{A rapidly convergent iteration method and non-linear partial
  differential equations I}}, Ann. Scuola Norm Sup. Pisa - Classe di Scienze
  S\'er. 3 \textbf{20} (1966), no.~2, 265--315.

\bibitem{Moser_KAM}
\bysame, \emph{{On the construction of almost periodic solutions for ordinary
  differential equations (Tokyo, 1969)}}, Proc. Internat. Conf. on Functional
  Analysis and Related Topics, Univ. of Tokyo Press, 1969, pp.~60--67.

\bibitem{Pyartli}
A.S. Pyartli, \emph{{Diophantine approximations on submanifolds of Euclidean
  space}}, Functional Analysis and Its Applications \textbf{3} (1969), no.~4,
  303--306.

\bibitem{Russmann}
H.~R\"ussmann, \emph{{\"Uber das Verhalten analytischer Hamiltonscher
  Differentialgleichungen in der N\"ahe einer Gleichgewichtsl\"osung}}, Math.
  Annalen \textbf{154} (1964), 285--306.

\bibitem{Russmann_KAM}
\bysame, \emph{Nondegeneracy in the perturbation theory of integrable dynamical
  systems}, Number theory and dynamical systems (York, 1987), London Math.
  Soc., Cambridge University Press, 1989, pp.~5--18.

\bibitem{Sevryuk_KAM}
M.B. Sevryuk, \emph{{The classical KAM theory at the dawn of the twenty-first
  century}}, Moscow Math. Journal \textbf{3} (2003), no.~3, 1113--1144.

\bibitem{Siegel_linearisation}
C.L. Siegel, \emph{Iteration of analytic functions}, Annals of Mathematics
  \textbf{43} (1942), 607--612.

\bibitem{Siegel_vecteurs}
\bysame, \emph{{\"Uber die Normalform analytischer Differentialgleichungen in
  der N\"ahe einer Gleichgewichtsl\"osung}}, Nach. Akad. Wiss. G\"ottingen,
  math.-phys. (1952), 21--30.

\bibitem{Stolovitch}
L.~Stolovitch, \emph{Singular complete integrability}, Inst. Hautes \'Etudes
  Sci. Publ. Math \textbf{91} (2000), 133--210.

\bibitem{Stolovitch_KAM}
\bysame, \emph{{ A KAM phenomenon for singular holomorphic vector fields}},
  Publications Math\'ematiques de l'Institut des Hautes \'Etudes Scientifiques.
  \textbf{102} (2005), 99--165.

\bibitem{Vey}
J.~Vey, \emph{Sur certains syt\`emes dynamiques s\'eparables}, American Journal
  of Math \textbf{100} (1978), 591--614.

\bibitem{Whitney_extension}
H.~Whitney, \emph{Analytic extensions of differentiable functions defined in
  closed sets}, Trans. Amer. math. Soc. \textbf{36} (1934), 63--89.

\end{thebibliography}
 \end{document}